\theoremstyle{plain}
\newtheorem{theorem}{Theorem}[section]
\newtheorem{lemma}[theorem]{Lemma}
\newtheorem{corollary}[theorem]{Corollary}
\newtheorem{example}{Example}
\theoremstyle{definition}
\newtheorem{definition}[theorem]{Definition}
\newtheorem{assumption}{Assumption}
\theoremstyle{remark}
\newtheorem{remark}{Remark}
\def\paragraph#1{\noindent \textbf{#1}}
\numberwithin{equation}{section}
\def\d{\mathrm{d}}
\def\a{\alpha}
\def\b{\beta}
\def\e{\e} 
\def\g{\gamma}
\def\t{\tau}
\def\G{\Gamma}
\def\R{{\Bbb R}}  
\def\N{{\Bbb N}}  %
\def\P{{\Bbb P}}  
\let\cal=\mathcal
\def\EE{{\cal E}}
\def\GG{{\cal G}}
\def\LL{{\cal L}}
\def\MM{{\cal M}}
\def\OO{{\cal O}}
\def\SS{{\cal S}}
\def\VV{{\cal V}}
\def\eps{\varepsilon}
\def\v{\mathbf{v}}
\def\w{\mathbf{w}}
\def\liv{\text{living}}
\def\mut{\text{mut}}
\def\fix{\text{fix}}
\def\lalpha{{\lfloor\alpha\rfloor}}
\DeclareMathOperator*{\argmin}{arg\,min}
\newcommand{\ifct}[1]{\mathbbm{1}_{ {#1} }}
\newcommand{\Exd}[1]{\mathbb{E}\left[ {#1}\right]}
\newcommand{\Prob}[1]{\mathbb{P}\left({#1}\right)}
\newcommand{\abs}[1]{\left\lvert#1\right\rvert}
\newcommand{\dset}[1]{\left\{ {#1} \right\}}
\newcommand{\ESC}{\text{\hspace{-0.15em}E\hspace{-0.05em}S\hspace{-0.1em}C}}
\begin{document}

\title[\ ]{A general multi-scale description of metastable adaptive motion across fitness valleys}

\author[]{Manuel Esser, Anna Kraut}
\address{M.\ Esser\\Institut f\"ur Angewandte Mathematik\\
	Rheinische Friedrich-Wilhelms-Universit\"at\\ Endenicher Allee 60\\ 53115 Bonn, Germany}
\email{manuel.esser@uni-bonn.de}
\address{A.\ Kraut\\School of Mathematics\\
	University of Minnesota - Twin Cities\\ 206 Church St SE\\ Minneapolis\\ MN 55455\\ USA}
\email{kraut082@umn.edu}

\keywords{adaptive dynamics, stochastic individual-based models, birth death processes with immigration, metastability, multi-scale limits}

\subjclass[2010]{37N25, 60J27, 60J80, 92D15, 92D25}

\thanks{This work was partially supported by the Deutsche Forschungsgemeinschaft (DFG, German Research Foundation) under Germany's Excellence Strategy GZ 2047/1, Projekt-ID 390685813 and GZ 2151, Project-ID 390873048 and through Project-ID 211504053 - SFB 1060. The authors thank Anton Bovier for stimulating discussion and feedback on the manuscript. Moreover, the authors are very grateful to the anonymous referees for their numerous comments and questions.}

\begin{abstract}
	{We consider a stochastic individual-based model of adaptive dynamics on a finite trait graph $G=(V,E)$. The  evolution is driven by a linear birth rate, a density dependent logistic death rate and the possibility of mutations along the (possibly directed) edges in $E$. We study the limit of small mutation rates for a simultaneously diverging population size. Closing the gap between \cite{BoCoSm19} and \cite{CoKrSm21} we give a precise description of transitions between evolutionary stable conditions (ESC), where multiple mutations are needed to cross a valley in the fitness landscape. The system shows a metastable behaviour on several divergent time scales, corresponding to the widths of these fitness valleys.
	We develop the framework of a meta graph that is constituted of ESCs and possible metastable transitions between those. This allows for a concise description of the multi-scale jump chain arising from concatenating several jumps.
	Finally, for each of the various time scale, we prove the convergence of the population process to a Markov jump process visiting only ESCs of sufficiently high stability.}
\end{abstract}

\maketitle

\section{Introduction}

The theory of evolution aims to understand the adaptation of biological populations to their environment through mutation and selection. Following the principles originally proposed by Darwin, it associates to each individual a fitness, which characterises their ability to survive and produce a growing population. The path of evolution, tracing the types of individuals that were able to fixate in the population, usually follows a sequence of types of increasing fitness.
However, in many cases the mutational path has to pass through a number of deleterious or neutral intermediate types in order to reach a type of higher fitness. This can for example be seen in cancer initiation, where multiple driver mutations need to be accumulated to induce an outgrowing population \cite{MarRai17}. 
Other examples are the formation of complex mechanisms like flagella in bacteria, where only partially functional intermediate stages of flagella yield an evolutionary disadvantage but fully functional apparatuses lead to increased fitness \cite{PaMa06}.
See also \cite{DeViKru14} for a review of empirical fitness landscapes arising in nature.

When the population needs to cross types of lower fitness in order to reach a fitter type, many such attempts will be unsuccessful. This is because the intermediate unfit types are destined to go extinct within a short time and might not produce a new mutant type before this happens. As a result, the waiting time to cross a valley in the fitness landscape is much longer than the invasion time of fit mutant types that are directly accessible. Once a fit type is attained, however, it rapidly fixates in the population. These dynamics, which can also be analysed in the framework of metastability, as illustrated below, have already been studied heuristically by Gillespie in the 80s \cite{Gill84}.  Since then, fitness valleys have been studied in a variety of mathematical models, ranging from Moran models \cite{Koma07,GoIwNoTr09} to multi-type branching processes \cite{NichAn19}. 

The model that we want to focus on in this paper is a stochastic individual-based model of adaptive dynamics, for which Bovier, Coquille and Smadi have studied fitness valleys in the simple case of a linear trait space \cite{BoCoSm19}.
This type of model tracks the sizes of different subpopulations and - opposed to many others like the Moran model - does not work under the assumption of a constant overall population size.  It is in this aspect closer to branching processes, where the population size varies over time. However, infinite growth is limited due to competitive interactions. Moreover, selective advantages of certain traits are not prescribed by a fixed parameter but arise through these interactions. This is particularly important for the long-term evolution of the population since the fitness landscape depends on the current composition of the dominant population and changes over time.

This study of the interplay of ecology and evolution goes back to ideas from Metz and Geritz (among others) in the early 90s \cite{MeNiGe92}. Shortly after, an individual-based approach has been proposed by Bolker and Pacala \cite{BoPa97} and a rigorous construction was first presented by Fournier and M\'el\'eard almost 20 years ago \cite{FoMe04}. Since then, these models have been the topic of study for scaling limits in a variety of parameter regimes and extensions to the base model (e.g.\ \cite{Cha06,ChMe11,BaBoCh17,Sma17,BoCoNeu18,KrBo19,ChMeTr19,CoKrSm21}). We refer to \cite{Bov21} for a comprehensive overview of various scaling limits.

To study the typical long-term behaviour of the population, two scaling parameters are introduced: The carrying capacity $K$, which scales the order of the population size, and the mutation probability $\mu_K$, which scales the frequency of mutation events. For large populations ($K\to\infty$) and rare mutations ($\mu_K\to0$), different mechanisms that change the state of the population - like mutations introducing a new type or interactions between individuals that lead to a new equilibrium state of resident traits - act on different time scales. There are three important time scales in this setting: Ecological interactions between well-established subpopulations, like the competition for resources, can change the composition of the overall population within a short time of order 1. This is related to classical Lotka-Volterra dynamics and leads to equilibrium states between the larger traits. Short-range mutations and the initial exponential growth of small mutant populations can be witnessed on a logarithmic time scale of order $\ln K$. Finally, long-range mutations - in particular those that need to traverse a large fitness valley of width $L$ - are quite rare and occur on a time scale of order $1/K\mu_K^L$. The distinction between long and short-range mutations depends on the choice of the mutation probability $\mu_K$, where long ranges $L$ satisfy $K\mu_K^L\ll1$. To obtain a non-trivial limit as $K\to\infty$, the population size is usually rescaled by $K$. As a result, only the established resident traits are visible. Since the ecological changes of these traits happen very fast in comparison with the other time scales, the limit of the population process yields a jump process that transitions between different equilibrium states.

The effects of short-range mutations on the $\ln K$-time scale have been studied extensively by Coquille, Kraut, and Smadi in \cite{CoKrSm21}. The authors give a full description of the limiting dynamics for the scenario of a general finite graph as a trait space. As mentioned above, the crossing of fitness valleys through long-range mutations (on the $1/K\mu_K^L$-time scale) has been analysed for a simple linear trait space in \cite{BoCoSm19}. Moreover, the case of very rare mutations, where even neighbouring traits are regarded as long-range mutations, has already been studied by Champagnat and M\'el\'eard in \cite{Cha06,ChMe11}, who showed convergence to the trait substitution sequence or polymorphic evolution sequence.

The present paper finally closes the gap between the previous works and gives a full description of the jump processes resulting from long-range mutations on general finite trait graphs, thus extending the results of \cite{BoCoSm19} to the more general setting of \cite{CoKrSm21}. This general setting entails that, for a given equilibrium state, there might be several paths to cross the surrounding fitness valley. Concentrating on the decisive, shortest paths we calculate the rate of a transition to the next evolutionary stable condition and give the precise asymptotics in Theorem \ref{Thm:main} and Corollary \ref{Cor:main}. The length of the shortest paths determines the time scale to cross the valley. Based on this, we introduce the notion of a stability degree $L$ to classify the equilibrium states. Combining multiple of these steps gives rise to a jump chain that moves on a so called metastability graph stated in Corollary \ref{Cor:kProcess}. This graph typically consists of fitness valleys of different width, which can be crossed on different time scales of the form $1/K\mu_K^L$. Depending on the choice of time scale, only some of these transitions are possible (valleys of width strictly larger than $L$ cannot be crossed) or visible (transitions of valleys of width strictly smaller than $L$ are immediate). This leads to different limiting jump processes  in Theorem \ref{Thm:Lscale}.

When long-range mutations are necessary to cross a large fitness valley, the system displays an almost stable behaviour on shorter time scales but can change its state when waiting a long time. This type of phenomenon is also known as \textit{metastability}. It has been studied mathematically mostly in the context of physics and statistical mechanics (e.g.\ \cite{CiNa13}). However, the concept is very versatile and can be applied to many dynamical systems, including models for biological processes.
This has for example been mentioned in \cite{BoCoSm19} for models of adaptive dynamics,
and in \cite{DawGre14} for population dynamics.

In the former case, as well as in this paper, the role of the traditional physical energy (landscape) is taken over by the fitness (landscape). Instead of passing a critical state of high energy, the process has to cross a valley of negative fitness through a sequence of deleterious mutations. Similarly to the fast dynamics after passing a high energy state, the adaptive dynamics system quickly attains a new metastable equilibrium once a fit mutant is reached due to fast exponential growth. The results of \cite{BoCoSm19} and this paper even confirm classical definitions of the mean time for a metastable transition (e.g.\ \cite{BovHol15}),  by proving that the waiting times for jumps between equilibrium states are exponentially distributed when considering the correct time scale.

While single jumps across a fitness valley can be regarded as metastable transitions, the limiting jump chain can be related to the concept of \textit{adaptive walks} or \textit{flights}. Those are stochastic processes that directly study the motion of the macroscopic population on the trait space, focussing on successful invasions and omitting the microscopic dynamics (see \cite{Krug21} for an overview). There are two sources of randomness in adaptive walks: A random fitness landscape and a random motion towards neighbours of higher fitness, according to some transition law.  Based on these, properties of interest are the distribution and accessibility of fitness maxima \cite{SchKr14,NoKr15,BeBruShi16,BeBruShi17}, as well as the time or path length to reach those maxima \cite{Orr03}. In adaptive flights, transitions are not just possible between neighbouring traits but from one local fitness maximum to another \cite{JainKrug05,JainKrug07,Jain07,KrugNeid2011}. This relates back to the limiting processes derived in this paper, where the population jumps between equilibrium states that are surrounded by valleys of traits of lower fitness.

A major difference between the models of adaptive walks/flights and adaptive dynamics is that the former assume a fitness landscape that is random but fixed in time, while in the latter case the fitness landscape is dynamic and depends on the current resident traits. As mentioned before, the notion of local fitness maxima can nevertheless be translated. Moreover, if equal competition between all traits is assumed in the adaptive dynamics model, the fitness landscape can again be regarded as fixed. We study this special case in a number of examples. Overall, the results of this paper can be seen as a validation of certain types of adaptive walks or flights, deriving their macroscopic dynamics from a microscopic, individual-based model.

The remainder of this paper is structured as follows: In Chapter \ref{sec:2_model_results}, we rigorously define the individual-based model of adaptive dynamics, for which we derive our limit theorems. We introduce key quantities, like the fitness of a trait, and recapitulate the most important results of \cite{CoKrSm21} that lead to a metastable state on the $\ln K$-time scale. Finally, we heuristically derive the limit behaviour on longer time scales and present the formal convergence results, starting with a single metastable transition in Section \ref{sec:2.2_first_result} and treating the full jump process in Section \ref{sec:2.3_second_result}. Chapter \ref{sec:3_examples} is devoted to the discussion of a number of examples that highlight different aspects of the complicated limiting dynamics in an easy set up. The proofs of the main results of this paper can be found in Chapter \ref{sec:4_proofs}. A combinatorial result on excursions of subcritical birth death processes and the complete version of the results from \cite{CoKrSm21} are stated in Appendix \ref{appendix}, for the convenience of the reader.


\section{Model and Main Results}
\label{sec:2_model_results}
In this chapter we introduce the individual-based model of adaptive dynamics and develop the main results of this paper. After a rigorous definition of the population process and its driving parameters we give a short overview of the behaviour on the time scales of order $1$ and $\ln K$ in Section \ref{sec:prevresults}. Moreover, in this section we derive the key quantities that lead us to the definition of an evolutionary stable condition.
Our main results on the transition out of an ESC are stated in Section \ref{sec:2.2_first_result} and we give a heuristic explanation there. Finally, Section \ref{sec:2.3_second_result} is devoted to our results on multi-scale jump chains and the convergence of the population process. For the convenience of the reader, we provide a preview of the different time scales and the main results of this paper at the end of Section \ref{sec:2.1_model}.

\subsection{Individual-based model 
}
\label{sec:2.1_model}
To study the evolution of a heterogeneous population, we consider a classical stochastic individual-based model of adaptive dynamics. Each individual of our haploid population is characterised by its trait, which can be interpreted as its geno- or phenotype. Note that we assume a one to one correspondence between trait and physical properties. In this paper we consider a finite trait space that is given by a (possibly directed) graph $G=(V,E)$. Here, the set of vertices $V$ represents the possible traits that individuals can obtain. The set of edges $E$ marks the possibility of mutation between traits.

To each trait we associate a number of parameters that describe the dynamics of the system.
For $v,w\in V$ and $K\in\N$, denote by
\begin{itemize}
	\item $b(v)\in\R_+$, the \textit{birth rate} of an individual of trait $v$,
	\item $d(v)\in\R_+$, the \textit{(natural) death rate} of an individual of trait $v$,
	\item $c^K(v,w)=c(v,w)/K\in\R_+$, the \textit{competition} imposed by an individual of trait $w$  onto an individual of trait $v$,
	\item $\mu_K\in[0,1]$, the \textit{probability of mutation} at a birth event,
	\item $m(v,\cdot)\in\MM_p(V)$, the \textit{law of the trait of a mutant} offspring produced by an individual of trait $v$.
\end{itemize}
Here, $\MM_p(V)$ denotes the set of probability measures on $V$. The parameter $K$ scales the competitive interaction between individuals. It is called \textit{carrying capacity} and can be interpreted as the environment's capacity to support life, e.g.\ through the supply of nutrients or space. The way in which the mutation probability $\mu_K$ may depend on $K$ is discussed below.

To ensure a limited population size and to establish the connection between the possibility of mutation and the edges of our trait graph, we make the following assumptions on our parameters.
\begin{assumption}\label{Ass_selfcomp}
	For every $v\in V$, $c(v,v)>0$. Moreover, $m(v,v)=0$, for all $v\in V$, and $(v,w)\in E$ if and only if $m(v,w)>0$. 
\end{assumption}

The evolution of the population over time is described by the Markov process $N^K$ with values in $\mathbb{D}(\R_+,\N^V)$. $N^K_v(t)$ denotes the number of individuals of trait $v\in V$ that are alive at time $t\geq 0$. The process is characterised by its infinitesimal generator:
\begin{align}\label{Generator}
	\LL^K\phi(N)=&\sum_{v\in V}(\phi(N+\delta_v)-\phi(N))\left(N_vb(v)(1-\mu_K)+\sum_{w\in V}N_wb(w)\mu_Km(w,v)\right)\notag\\
	&+\sum_{v\in V}(\phi(N-\delta_v)-\phi(N))N_v\left(d(v)+\sum_{w\in V}c^K(v,w)N_w\right),
\end{align}
where $\phi:\N^V\to\R$ is measurable and bounded and $\delta_v$ denotes the unit vector at $v\in V$. The process can be constructed algorithmicly following a Gillespie algorithm \cite{Gill76}. Alternatively the process can be represented via Poisson measures (see \cite{FoMe04}), a representation that is used in the proofs of our results. Throughout this paper, we assume that all processes $N^K$, $K\in\N$, are defined on a common probability space. We give an example of a joint construction in the proof of Lemma \ref{lem:equilibriumSize}. However, we emphasize that we do not assume a specific dependence or independence between the different process in order for our results to hold true.

We want to study the typical behaviour of this process for large populations and moderately rare mutations. We do not have a fixed population size. However, due to our scaling of $c^K(v,w)$, the equilibrium size the population is always of order $K$. We therefore consider the limit of the processes $(N^K/K,K\in\N)$ as $K\to\infty$ and $\mu_K\to0$ simultaneously in this paper.

\textbf{Outlook:} In the following sections, we develop the theory to describe the systems behaviour on various time scales. Since the description of each increasing time scale builds on the behaviour on previous shorter time scales, we go through these step by step, introducing the relevant notation as well as previous and new results along the way. To give the reader some orientation, we provide a brief overview of the time scales and preview the main results:
\begin{itemize}
	\item During times of order 1, the limiting rescaled stochastic process can be approximated by the solution of deterministic differential equations of Lotka-Volterra type.  These describe how the larger subpopulations attain an equilibrium state (if existent). Since we consider the regime of $\mu_K\to0$, mutations cannot be observed on this time scale.
	\item For moderately rare mutations $\mu_K=K^{-1/\alpha}$, mutations occur on the time scale $1/K\mu_K$ and mutant subpopulations grow from a single individual to a size of order $K$ on the time scale $\ln K\gg1/K\mu_K$. The limiting dynamics on the $\ln K$-time scale have been described in \cite{CoKrSm21}. We provide the heuristics of this result in Section \ref{sec:prevresults} and give the precise statement in \ref{app:A2}. On this time scale, the system evolves until it reaches an equilibrium state, where there are no fit mutant traits of (graph-)distance at most $\alpha$ to the resident traits. This state is what we call an evolutionary stable condition (ESC).
	\item In Section \ref{sec:2.2_first_result}, we discuss how, on a more accelerated time scale $1/K\mu_K^L$ that corresponds to the distance $L>\alpha$ of the closest fit mutant, the process can escape an ESC. Our first result Theorem \ref{Thm:main} states that the time to produce a new fit mutant outside of the ESC is of order $1/K\mu_K^L$ and exponentially distributed with a rate that can be calculated precisely. It moreover states the probabilities to produce specific mutant types. Corollary \ref{Cor:main} deduces that the time to reach a new ESC has the same distribution as the time of leaving the old ESC and calculates transition probabilities to reach specific new ESCs. These single transitions between ESC states, which can be regarded as metastable transitions,  are used to define the (directed) metastability graph $\mathcal{G}_\ESC$ in Definition \ref{def:MetastabilityGraph}, in the beginning of Section \ref{sec:2.3_second_result}. It consists of subsets of $V$ that allow for an ESC and the possible transitions between them.
	\item Since the time scales on which transitions on the metastability graph occur depend on the distances $L$ between fit mutants and current resident traits, the corresponding jump chain (characterised in Corollary \ref{Cor:kProcess}) cannot be obtained as a limiting process on a single time scale. Instead, if we fix a time scale $1/K\mu_K^L$, only transitions of this precise distance $L$ are visible in the limit of $N^K/K$ as $K\to\infty$. Shorter jumps occur immediately and longer jumps cannot be observed. To describe these dynamics, we introduce an $L$-scale graph $\mathcal{G}^L$, consisting of all ESCs that are not left immediately on the time scale $1/K\mu_K^L$ and characterize the limiting jump process on this graph in Theorem \ref{Thm:Lscale}.
\end{itemize}

\subsection{Short-term dynamics and frequent mutations}\label{sec:prevresults}

A law of large numbers result by \cite{EtKu86} states that, for $\mu_K\equiv0$, the rescaled processes $N^K/K$ converge to the solution of a system of Lotka-Volterra equations. The study of these equations is central to determine the short term evolution, i.e.\ the evolution on a finite time scale, of the process $N^K$.

\begin{definition}[Lotka-Volterra system, equilibrium states, invasion fitness]
	For a subset $\v\subset V$ we denote by $LVS(\v)$ the system of \textit{Lotka-Volterra equations} given by
	\begin{align}\label{LV_system}
		\frac{\d}{\d t}{n}_v(t)= \left( b(v) - d(v) - \sum_{w \in \v} c(v,w) n_w(t) \right) n_v(t), \quad v\in\v,\ t\geq0. 
	\end{align}
	By $LVE(\v)$, we denote the set of all \textit{equilibrium states} $\bar{n}\in\R_{\geq0}^\v$ such that
	\begin{align}
		\left( b(v) - d(v) - \sum_{w \in \v} c(v,w) \bar{n}_w \right) \bar{n}_v=0, \quad v\in\v,
	\end{align}
	and by $LVE_+(\v):=LVE(\v)\cap\R_{>0}^\v$ the subset of \textit{positive equilibrium states}. If $LVE_+(\v)$ consists of a single asymptotically stable element, we denote it by $\bar{n}(\v)$ and call it \textit{coexistence equilibrium}.\\
	For a trait $w\in V$ and coexistence equilibrium $\bar{n}(\v)$, we denote by
	\begin{align}\label{eq:fitness_def}
		f(w,\v)=b(w)-d(w)-\sum_{v\in\v}c(w,v)\bar{n}_v(\v)
	\end{align}
	the \textit{invasion fitness} of $w$. For a given equilibrium $\bar{n}(\v)$, we call a trait $w$ \textit{fit} if $f(w,\v)>0$ and \textit{unfit} if $f(w,\v)<0$.
\end{definition}

Note that the invasion fitness $f(w,\v)$ describes the approximate growth rate of a small population of trait $w$ in a bulk population of coexisting traits $\v$, in the mutation-free system. To simplify notation for later purpose, in the case of monomorphic equilibria, i.e.\ $\v=\{v\}$, we write
\begin{align}
\bar{n}(v):=\bar{n}_v(\{v\})\quad\text{and}\quad f(w,v):=f(w,\{v\}).
\end{align}

Going back to the stochastic process $N^K$, it is of interest to study the logarithm of the population size as $K\to\infty$. Only subpopulations with a size of order $K$ are visible in the rescaled limit of $N^K/K$ and exponential growth of the absolute population size translates to linear growth of the $K$-exponent when studying a logarithmic time scale via $e^{t\ln K\cdot f}=K^{t\cdot f}$. This makes it easier to describe the limiting dynamics. We therefore define $\beta^K=(\beta^K_v)_{v\in V}$, where
\begin{align}
	\beta^K_v(t):=\frac{\ln(1+N^K_v(t))}{\ln K},
\end{align}
which is equivalent to $N^K_v(t)=K^{\beta^K_v(t)}-1$. Note that we add or subtract 1 here respectively to ensure that $\beta^K_v(t)=0$ if and only if $N^K_v(t)=0$. As $K\to\infty$, $\beta^K_v$ ranges between $0$ and $1$.

\begin{remark}
	In contrast to \cite{ChMeTr19,CoKrSm21}, we do not rescale the time by $\ln K$ in this definition of $\beta^K$ since we are studying a variety of different time scales.
\end{remark}

Based on this definition, we introduce the following subsets of traits.
\begin{definition}[macroscopic, microscopic, living and resident traits]
	\begin{enumerate}
		\item A trait ${v\in V}$ with exponent ${\beta^K_v}$ is called \emph{macroscopic} if ${\liminf_{K\to\infty}\beta^K_v=1}$.
		\item A trait that is not macroscopic is called \emph{microscopic}.
		\item The set of \emph{living traits} is the set $V^K_\liv:=\{v\in V : \beta^K_v>0\}$.
		\item A subset of traits $\v\subseteq V$ is called \textit{resident} if all $v\in\v$ are macroscopic and have a population size close to the coexistence equilibrium $\bar{n}(\v)$.
	\end{enumerate}
\end{definition}

\begin{remark}
	Note that these definitions are time dependent when considering an evolving population. The macroscopic traits change according to $\beta^K(t)$ and the varying subset of living traits is denoted by $V^K_\liv(t)$.
	Most of the time macroscopic and resident traits coincide. A non-resident macroscopic trait is either unfit and will shrink to an order lower than $K$ within a short time, or it is fit and will therefore induce a change in resident traits according to the short-term Lotka-Volterra dynamics.
\end{remark}

To study multi-step mutations we consider paths on the trait graph \mbox{$G=(V,E)$}.
\begin{definition}[paths and distances]
	We denote a \textit{(finite) path} on $G=(V,E)$ by\linebreak $\g=(\g_0,...,\g_\ell)$ such that $\g_i\in V$, $0\leq i\leq\ell$, and $(\g_i,\g_{i+1})\in E$, $0\leq i\leq\ell-1$. \\
	The \textit{length of a path} $\g=(\g_0,...,\g_\ell)$ is defined as $\abs{\g}=\ell$. We write $\g:\v\to \v'$ as a short notation for all paths $\gamma$ that connect $\v\subset V$ to $\v'\subset V$, i.e.\ that satisfy $\g_0\in\v$ and $\g_{\abs{\g}}\in\v'$.\\
	We introduce the graph distance between two vertices $v,w\in V$ as the length of the shortest connecting path
	\begin{align}
		d(v,w):=\min_{\g:v\to w}\abs{\g},
	\end{align}
	where the minimum over an empty set is taken to be $\infty$.
	For two subsets $\v,\v'\subset V$ we define
	\begin{align}
		d(\v,\v'):=\min_{v\in\v,v'\in\v'}d(v,v').
	\end{align}
\end{definition}

\begin{remark}
	Note that $d(v,w)$ is not a distance in the classical sense, as it may not be symmetric in the case of a directed graph.
\end{remark}

Along these paths $\g$, mutants can be produced. A macroscopic trait produces subpopulations of a size of order $K\mu_K$ of its neighbouring traits, which then produce subpopulations of a size of order $K\mu_K^2$ of the second order neighbours, and so on. These subpopulations, that are produced along a path $\g$, can survive as long as $K\mu_K^\ell\gg1$. This motivates the study of mutation probabilities $\mu_K=K^{-1/\alpha}$, $\alpha>0$, where mutants can survive within a radius $\alpha$ of the resident traits.

\begin{remark}
	We could also study mutation probabilities $\mu_K=f(K)K^{-1/\alpha}$ such that $\abs{\ln(f(K))}\in o(\ln K)$. This would not change the following results. However, we restrict ourselves to the case of $f(K)\equiv1$ to simplify notation.
\end{remark}

To avoid mutant subpopulations with a size of order $K^0=1$ and to ensure that non-resident traits are always either fit or unfit we make the following assumptions.

\begin{assumption}
	\label{Ass_alphafit}
	\begin{enumerate}[(i)]
		\item The mutation probability satisfies $\mu_K=K^{-1/\alpha}$ for some $\alpha \in \R_+\setminus \N$.
		\item For each $\v\subset V$ such that $LVE_+(\v)=\{\bar{n}(\v)\}$, it holds $f(w,\v)\neq0$, for all $w\notin\v$.
	\end{enumerate}

\end{assumption}

\begin{remark}
	Both of these assumptions are purely technical. The first one prevents the case where a fit mutant population of size of order 1 can die out due to stochastic fluctuations such that fixation in the population becomes random. The second one allows us to approximate non-resident subpopulations by branching processes that are either super- or subcritical, but not critical. Note that the second assumption is only required for subsets $\v$ that allow for a unique positive equilibrium state (i.e.\ such that $LVE_+(\v)$ contains exactly one element).
\end{remark}

Under these assumptions, the evolution of the population on the time scale $\ln K$ has been studied in \cite{CoKrSm21}. The authors give an algorithmic description of the limiting evolution of $\beta^K(t\ln K)$ as long as there always exists a unique asymptotically stable equilibrium of the Lotka-Volterra system \eqref{LV_system} involving all macroscopic traits. In the following, we give the heuristics of this description. For the precise result we refer to Section \ref{proof_logK}.

Roughly speaking, for a given set of resident traits $\v$ at their (coexistence) equilibrium $\bar{n}(\v)$, every living microscopic trait $w\in V_\liv$ can grow (or shrink) with rate at least $f(w,\v)$. This is due to the fact that the competitive interaction with all microscopic traits can be neglected in comparison with this rate. If there was no mutation (i.e.\ $\mu_K=0$), $f(w,\v)$ would be the exact growth rate of $w$. However, due to incoming mutants from neighbouring traits, the population size of $w$ is also at least as big as a $\mu_K$-fraction of the population sizes of its (incoming) neighbours. Since we only consider the order of the population size $\beta^K_w$, the largest of these influences dominates the asymptotics and a sum of population sizes (coming from different mutation sources) yields a maximum in the exponent. Overall, we obtain the relation
\begin{align}
	\beta^K_w(t\ln K)\approx\left(\beta^K_w(0)+tf(w,\v)\right)\lor\max_{u\in V: d(u,w)=1} \left(\beta^K_u(t\ln K)-\frac{1}{\alpha}\right).
\end{align}

Iterating this argument for traits at increasing distance to $w$ yields that, as long as the resident traits remain unchanged (i.e.\ traits $\v$ stay close to their equilibrium $\bar{n}(\v$) and no new traits become macroscopic), $\beta^K(t \ln K)$ converges to $\beta(t)$ such that
\begin{align}\label{logK_growth}
	\beta_w(t)=\max_{u\in V}\left[\beta_u(0)+(t-t_u)f(u,\v)-\frac{d(u,w)}{\alpha}\right]_+.
\end{align}
Here,
\begin{align}
t_u:=\begin{cases}
\inf\left\{s\geq 0:\exists\ u'\in V:\b_{u'}(s)=\tfrac{1}{\alpha},(u',u)\in E\right\}&\text{if }\b_u(0)=0,\\
0&\text{if }\b_u(0)>0.
\end{cases}
\end{align}

Once a former microscopic trait $w^*$ becomes macroscopic, the population sizes of $\v\cup w^*$ follow the Lotka-Volterra dynamics of \eqref{LV_system} to reach a new equilibrium associated to the resident traits $\v'\subset\v\cup w^*$ within a time of order 1 (if such a new unique equilibrium does not exist, or in a number of other technical special cases, the algorithm terminates as described in Section \ref{proof_logK}). During this phase, the orders of population sizes $\beta_w$ do not change significantly. After the change of resident traits, the population sizes again follow \eqref{logK_growth}, now with the changed fitnesses $f(u,\v')$.

This algorithmic description yields a series of successive resident traits. The macroscopically visible evolution stops as soon as an equilibrium $\v$ is reached such that $f(w,\v)<0$ for all $w\in V_\liv\backslash \v$. All traits $w\in V$ such that $d(\v,w)<\alpha$ stay alive due to incoming mutations but all other traits eventually go extinct according to \eqref{logK_growth} on the $\ln K$-time scale.

This observation leads us to the following definitions (visualised in Figure \ref{fig:TraitGraph}).
\begin{definition}[mutation spreading neighbourhood]
	For a subset $\v\subset V$, we denote by $V_\alpha(\v):=\{w\in V: d(\v,w)<\alpha\}$ the \textit{mutation spreading neighbourhood} of $\v$. The traits at the boundary of $V_\alpha$ are denoted by $\partial V_\alpha(\v):=\{w\in V: d(\v,w)=\lalpha\}$.
\end{definition}	

\begin{definition}[(asymptotic) evolutionary stable condition]
	\label{def:ESC}
	\begin{itemize}
		\item[(i)] A subset $\v\subset V$ and (orders of) population sizes $\beta$ are called an \textit{evolutionary stable condition (ESC)} if the traits $\v$ can coexist at equilibrium $\bar{n}(\v)$,
		\begin{align}\label{ESCfitness}
			f(w,\v)<0,\ \forall w\in V_\alpha(\v)\backslash\v,
		\end{align}
		and
		\begin{align}\label{ESCsize}
			\beta_w=\left(1-\frac{d(\v,w)}{\alpha}\right)_+,\ \forall\ w\in V.
		\end{align}
		\item[(ii)] A subset $\v\subset V$ and population sizes $(\beta^K)_{K\geq0}$ are called an \textit{asymptotic evolutionary stable condition} if the traits $\v$ can coexist at equilibrium $\bar{n}(\v)$, \eqref{ESCfitness} is satisfied,
		\begin{align}
	    		\abs{\b_w^K-(1-d(\v,w)/\alpha)}\in O\left(\frac{1}{\ln K}\right), \ \forall w\in V_\a(\v),
	    	\end{align}
	    	and there exists a $K_0<\infty$ such that $\beta^K_w=0$, for all $K>K_0$ and \mbox{$w\in V\backslash V_\alpha(\v)$}.
	\end{itemize}
\end{definition}

\begin{remark}
	\begin{itemize}
	\item[(i)] Note that \eqref{ESCfitness} is only a necessary condition for a subset $\v\subset V$ to be able to attain an ESC during the evolution of a population. \eqref{ESCsize} are the orders of population sizes that unfit traits stabilise at purely due to (multi-step) mutations from $\v$. \eqref{ESCfitness} guarantees that these will be reached for $w\in V_\alpha(\v)$. To attain an ESC $(\v,\beta)$, in addition all other traits $w\in V_\liv(\tau_\v)$, that are alive at the time $\tau_\v$ when the new equilibrium $\bar{n}(\v)$ is reached, have to satisfy $f(w,\v)<0$. If this is the case, all traits outside of $V_\alpha(\v)$ will die out within a $\ln K$ time and \eqref{ESCsize} will be reached. Otherwise, if there is a $w\in V_\liv(\tau_\v)\backslash V_\alpha(\v)$ such that $f(w,\v)>0$ (the case $f(w,\v)=0$ is excluded by Assumption \eqref{Ass_alphafit}), its subpopulation is able to grow, will not die out, and hence not satisfy \eqref{ESCsize}. The characterization of ESCs is therefore highly dependent on the state of the whole system.
	\item[(ii)] Note that the definition of an asymptotic ESC forces the population process to be in an ESC up to a multiplicative error of order one. That is
		\begin{align}
			N_w^K=(K^{(1-d(\v,w)/\a)_+}-1)\times \mathcal{O}(1).
		\end{align}
	The reason for introducing this error is that, for finite $K$, $N^K_w$ might never reach exactly $K^{(1-d(\v,w)/\a)_+}$. This is for example the case if $\bar{n}_v(\v)<1$ for some $v\in\v$.
	\end{itemize}
\end{remark}

By definition, an evolutionary stable condition is surrounded by unfit traits, at least within an $\alpha$-radius. This form of a fitness landscape is referred to as a \textit{fitness valley} and has been studied in a special case in \cite{BoCoSm19}. Based on this, we introduce a measure for the stability of a coexistence equilibrium, connected to the width of the surrounding fitness valley.

\begin{definition}[Stability degree]
		For a subset $\v\subseteq V$ we define its \emph{stability degree} $L(\v)$ by
		\begin{align}
			L(\v):=\begin{cases}\min_{w\in V: f(w,\v)>0}d(\v,w)&\text{if $\v$ can coexist,}\\
			0&\text{else}.\end{cases}
		\end{align}
\end{definition}

\begin{remark}\label{rem:ESCstability}
A subset $\v$ associated to an ESC satisfies $L(\v)>\a$ by definition. The evolution of the population process reaches a final state, independent of the time scale, once the resident traits satisfy $L(\v)=\infty$, i.e.\ there are no fit traits anymore.
\end{remark}


\subsection{Transitioning out of an ESC and first convergence result}
\label{sec:2.2_first_result}

Once an ESC is obtained, there is no further evolution on the $\ln K$-time scale. However, as long as there is a fitter trait that is connected to the resident traits, i.e.\ that can be reached along a finite path in $G$, we can witness metastable transitions on an even more accelerated time scale. On this time scale, under certain assumptions on the $\ln K$-dynamics, we observe a direct transition from one ESC to another.

In the following, we consider one of these transitions for an arbitrary initial asymptotic ESC. We split the transition into two phases: In the first phase, a new fit mutant beyond the fitness valley fixates in the population within a time of order $1/K\mu_K^{L(\v)}$.  In the second phase, a new ESC is obtained, starting with these new initial conditions, which takes a time of order $\ln K$. We assume that $\v$ and $(\b^K(0))_{K\geq0}$ are an asymptotic ESC.  We could also consider more general initial conditions that lead to an ESC within finitely many steps of the $\ln K$-algorithm in \cite{CoKrSm21}, see Remark \ref{rem:initialESC}. For the sake of a simpler notation, we stick with the assumption of starting in an (asymptotic) ESC here.

To consider the first phase of the transition, we introduce the set
\begin{align}
	V_\mut(\v)&:=\argmin_{w\in V: f(w,\v)>0}d(\v,w)=\{w\in V:f(w,\v)>0,d(\v,w)=L(\v)\}.
\end{align}
This consists of all fit mutant traits that are closest to $\v$ (visualised in Figure \ref{fig:TraitGraph}).

Note that $V_\mut(\v)\cap V_\alpha(\v)=\emptyset$ by the definition of an ESC. It turns out that the traits $V_\mut(\v)$ are the only ones that need to be considered for a crossing of the fitness valley since one of them will be the first new trait to fixate in the equilibrium population. If $V_\mut(\v)=\emptyset$, i.e.\ $L(\v)=\infty$, there is no fitter trait connected to the resident traits and the equilibrium associated to $\v$ is the final state of the population.

For $L(\v)<\infty$, we define the stopping time
\begin{align}
	T^K_\fix:=\inf\left\{t\geq 0:\exists\ w\in V\backslash V_\a(\v):\beta^K_w(t)\geq\frac{1}{\alpha}\right\},
\end{align}
the first time when a new trait reaches a size of order $K^{1/\alpha}$, can thus produce neighbouring mutants within a time of order 1 and influence the subpopulations of other traits. 

\begin{remark}
		Note that the name $T^K_\fix$ might be a little misleading at first glance. Generally, we speak of the fixation of a trait within a population as the event that the subpopulation corresponding to this trait does not go extinct (due to random fluctuations or negative fitness), as long as the fitness landscape stays unchanged. As this event is determined by the future progression of the population, there is no precise time point to pin it to. In particular, whether a trait fixates or goes extinct is not foreseeable at the time point when the first individual of this trait arises. Therefore, we choose instead the time point when the subpopulation has reached a size that guarantees non-extinction with probability 1, asymptotically as $K\to\infty$. We could choose a much smaller size than $K^{1/\alpha}$ for this, however, this will not influence the event of fixation and only change the stopping time by a time of order $\ln K$, which is negligible compared to the much longer time scale on which mutants arise. We thus pick the first time when mutants can influence the population size of other traits.
\end{remark}

\begin{figure}[h]
	\centering
	\includegraphics[width=.95\textwidth]{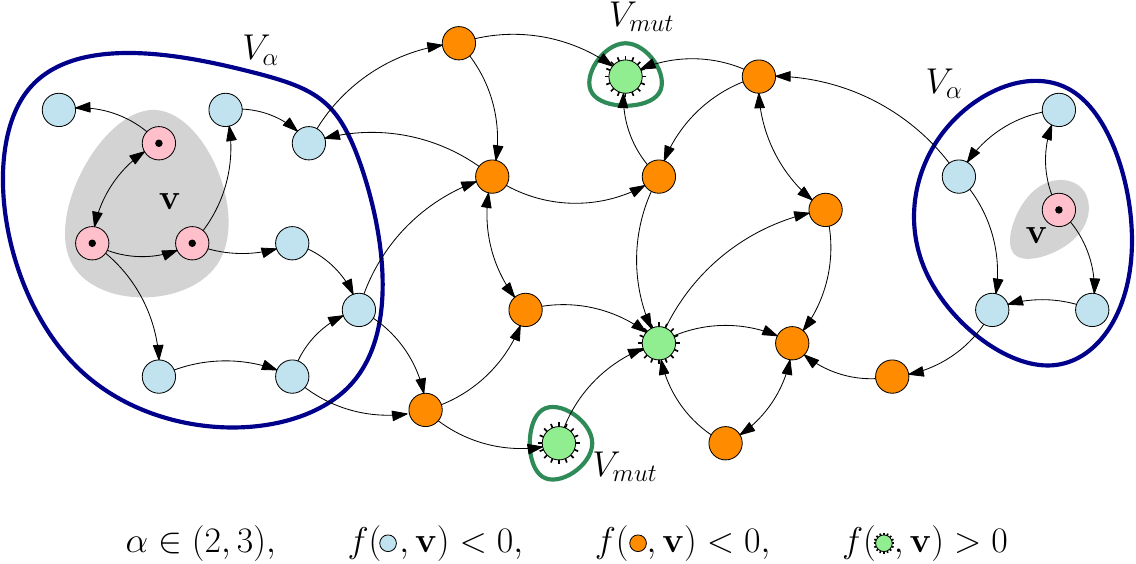}
	\caption{Trait graph $G=(V,E)$ with an ESC associated to the non-connected set of spotted resident traits $\v$. Unfit traits inside the mutation spreading neighbourhood $V_\a(\v)$ are marked light blue while those outside are dark orange. In this case, the stability degree is $L(\v)=4$. Therefore, only the two nearest of the three fit, spiked green traits make up the set of mutant candidates $V_\mut(\v)$.}
	\label{fig:TraitGraph}
\end{figure}

Our first result describes the limiting distribution of this stopping time $T^K_\fix$.

For a path $\g:\v\to V_\mut(\v)$ such that $\abs{\g}=L(\v)$, the rate at which a $w=\g_{L(\v)}$ mutant population arises along this path $\g$ and fixates can be derived as the product of several factors. The rate at which the first trait in $\g$ outside of $V_\alpha(\v)$ arises can be calculated in terms of the equilibrium population sizes of the traits in $V_\alpha(\v)$ (see Section \ref{sec:4.1_proof_equilsize}). This rate then has to be multiplied by the probabilities that all of the following unfit traits on the path $\g$ produce mutants of the correct trait before extinction, i.e.\ during small subcritical excursions. This yields the rate at which single  mutants of trait $w$ arise, which finally has to be multiplied by their probability of fixating in the population, i.e.\ of non-extinction.

In order to calculate the probability of mutation during a subcritical excursion, we need to introduce some notation. For a subset $\v\subset V$ and a trait $w\in V$ we define
\begin{align}
\rho(w,\v):=\frac{b(w)}{b(w)+d(w)+\sum_{v\in\v}c(w,v)\bar{n}_v(\v)}\label{def:rho},
\end{align}
which is connected to the probability of a birth event in the branching process approximating the growth of a mutant $w$ in a bulk population of coexisting traits $\v$. Moreover, we let
\begin{align}
\lambda(\rho):=\sum_{\ell=1}^\infty\frac{(2\ell)!}{(\ell-1)!(\ell+1)!}\rho^\ell(1-\rho)^{\ell+1},\label{def:lambda}
\end{align}
which is the expected number of birth events before extinction in a subcritical birth death process with birth probability $\rho$ (related to the expected number of positive jumps in a simple random walk on $\N$ before hitting 0, as explained in the proof of this result in Section \ref{appendix}, Lemma \ref{lem:offspringOfAnExcursion}). Note that this sum is finite, as long as $\rho\in[0,1]\backslash(1/2)$. This can be seen by bounding $\binom{2\ell}{\ell-1}<\binom{2\ell}{\ell}\leq(4)^\ell$ and $\rho(1-\rho)<1/4$.

With these definitions, the overall rate of mutants of trait $w\in V_\mut(\v)$ arising along path $\g$ and fixating in the population is approximately equal to $R(\v,\g)\mu_K^{L(\v)}$, where
\begin{align}
\label{eq:PathRate}
R(\v,\g):=&\bar{n}_{\g_0}(\v)\left(\prod_{i=1}^{\lalpha}\frac{b(\g_{i-1})m(\g_{i-1},\g_i)}{\abs{f(\g_i,\v)}}\right)
b(\gamma_{\lalpha})m(\gamma_{\lalpha},\gamma_{\lalpha+1})\notag\\
&\cdot\left(\prod_{j=\lalpha+1}^{L(\v)-1}\lambda(\rho(\g_j,\v))m(\g_j,\g_{j+1})\right)\cdot\frac{f(\g_{L(\v)},\v)}{b(\g_{L(\v)})}.
\end{align}
Here, the first line is the rate at which the first trait in $\g$ outside of $V_\a(\v)$ arises, which is related to the equilibrium size of trait $\g_{\lalpha}$. The first factor in the second line is the probability of producing consecutive mutants during subcritical excursions and the last factor is the fixation probability of trait $w=\g_{L(\v)}$. Note that, as $b(\g_{L(\v)})$ increases, so does $f(\g_{L(\v)},\v)$ (cf.\ \eqref{eq:fitness_def}), and hence this fixation probability is in fact increasing in the birth rate $b(\g_{L(\v)})$. 

The total rate at which a mutant population of trait $w\in V_\mut(\v)$ arises and fixates collects all shortest paths that end in $w$ and is approximately equal to $R(\v,w)\mu_K^{L(\v)}$, where
\begin{align}\label{eq:TraitRateFix}
R(\v,w):=\sum_{\substack{\gamma:\v\to w\\\abs{\g}=L(\v)}}R(\v,\g).
\end{align}

Finally, the total rate at which any mutant population of a trait in $V_\mut(\v)$ arises and fixates, i.e.\ the rate at which the population exits the ESC associated to $\v$, is approximately equal to $R(\v)\mu_K^{L(\v)}$, where
\begin{align}\label{eq:ExitRate}
R(\v):=\sum_{w\in V_\mut(\v)}R(\v,w).
\end{align}
The probability that this population is of trait $w\in V_\mut(\v)$ is proportional to the rate $R(\v,w)$.

With these heuristics, we can now state the first main result of this paper.
\begin{theorem}\label{Thm:main}
	Let $G=(V,E)$ be a finite graph. Suppose that Assumption \ref{Ass_selfcomp} and \ref{Ass_alphafit} are satisfied and consider the model defined by \eqref{Generator} with $\mu_K=K^{-1/\alpha}$. Assume that $\v\subset V$ and $(\beta^K(0))_{K\geq0}$ are an asymptotic ESC.
	Then there exist constants $\eps_0>0$ and $0<c<\infty$ such that, for all $0<\eps<\eps_0$, there exist exponential random variables $E_+(\eps)$ and $E_-(\eps)$ with parameters $R(\v)(1+c\eps)$ and $R(\v)(1-c\eps)$, such that
	\begin{align}
	\liminf_{K\to\infty}\P(E_-(\eps)\leq T^K_\fix K\mu_K^{L(\v)}\leq E_+(\eps))\geq 1-c\eps.
	\end{align}
	Moreover, for all $w\in V$, the probability of $w$ being the trait to trigger $T^K_\fix$ is
	\begin{align}\label{FixatingTrait}
	\lim_{K\to\infty}\P\left(\beta^K_w(T^K_\fix)=1/\alpha\right)=\begin{cases}R(\v,w)/R(\v)&\text{if }w\in V_\mut(\v),\\0&\text{else.}\end{cases}
	\end{align}
\end{theorem}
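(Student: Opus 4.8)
The plan is to decompose $T^K_\fix$ according to the mechanism encoded in $R(\v)$: a stable bulk consisting of the resident traits $\v$ and their quasi-equilibrated mutation neighbourhood $V_\a(\v)$ acts as an essentially time-homogeneous source which, at rate of order $K\mu_K^{L(\v)}$, launches ``attempts'' to cross the valley; each attempt is a chain of subcritical excursions along a shortest path $\g:\v\to V_\mut(\v)$, and only a vanishing fraction of attempts succeed in producing a fit mutant at distance $L(\v)$ that then survives. I would make this rigorous by coupling the true process $N^K$ with linear birth--death processes from above and below, so that the successful attempts can be compared with genuine homogeneous Poisson processes of rate $R(\v)(1\pm c\eps)K\mu_K^{L(\v)}$, whence the exponential sandwich follows.

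First I would establish the confinement of the bulk. Using the stability of the coexistence equilibrium $\bar n(\v)$ together with the $\ln K$-description of \cite{CoKrSm21}, I would show that, up to $T^K_\fix$, the resident densities $N^K_v/K$, $v\in\v$, stay within $\eps$ of $\bar n_v(\v)$ and each $w\in V_\a(\v)$ stays within a multiplicative $(1\pm\eps)$ factor of $K^{1-d(\v,w)/\a}$, with probability at least $1-c\eps$. The point is that these sizes are maintained by a balance between the incoming mutation flux from one step closer to $\v$ and the decay at rate $|f(w,\v)|$, which is strictly negative on $V_\a(\v)\setminus\v$ by \eqref{ESCfitness}; iterating this balance from $\bar n_{\g_0}(\v)$ reproduces the factors $\prod_{i=1}^{\lalpha}b(\g_{i-1})m(\g_{i-1},\g_i)/|f(\g_i,\v)|$ and in particular pins the size of the boundary trait $\g_{\lalpha}$. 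The difficulty is that the exit scale $1/K\mu_K^{L(\v)}$ is a large power of $K$, so fluctuations have a long time to accumulate; I would handle this by a regeneration argument showing that after each failed attempt the bulk relaxes back into the confined region within a time of order $\ln K$, which is negligible on the exit scale, so that the attempts are asymptotically independent and the source rate is effectively constant.

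Second I would treat the portion of the path beyond $V_\a(\v)$, where the subpopulations are of size $o(1)$ and behave as single individuals initiating subcritical excursions (their invasion fitness is negative at every distance $<L(\v)$ by the definition of $L(\v)$ and Assumption \ref{Ass_alphafit}(ii)). Approximating each excursion by a linear birth--death process with up-probability $\rho(\g_j,\v)$, the combinatorial result of the appendix gives that the expected number of birth events per excursion equals $\lambda(\rho(\g_j,\v))$, so the expected number of mutants passed to $\g_{j+1}$ during one excursion is $\lambda(\rho(\g_j,\v))m(\g_j,\g_{j+1})\mu_K$. Multiplying these transmission factors along the path and closing with the survival probability $f(\g_{L(\v)},\v)/b(\g_{L(\v)})$ of the supercritical process at the fit endpoint shows that a single first mutant at $\g_{\lalpha+1}$ triggers a successful crossing along $\g$ with probability asymptotic to $\mu_K^{L(\v)-\lalpha-1}\big(\prod_{j=\lalpha+1}^{L(\v)-1}\lambda(\rho(\g_j,\v))m(\g_j,\g_{j+1})\big)f(\g_{L(\v)},\v)/b(\g_{L(\v)})$. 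Combining with the source rate of first mutants from Step 1 yields the per-path exit rate $R(\v,\g)K\mu_K^{L(\v)}$; summing over the finitely many shortest paths ending at $w$ and then over $w\in V_\mut(\v)$ produces $R(\v,w)$ and $R(\v)$ as in \eqref{eq:TraitRateFix}--\eqref{eq:ExitRate}.

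Finally I would assemble the exponential limit and the trait identity. Having trapped the successful crossings between two homogeneous Poisson processes of rates $R(\v)(1\pm c\eps)K\mu_K^{L(\v)}$ via the upper and lower branching couplings, the first crossing time rescaled by $K\mu_K^{L(\v)}$ is sandwiched between $E_-(\eps)\sim\mathrm{Exp}(R(\v)(1-c\eps))$ and $E_+(\eps)\sim\mathrm{Exp}(R(\v)(1+c\eps))$, which is the claimed estimate. For \eqref{FixatingTrait}, since each crossing is, up to negligible probability, caused by one such excursion chain, the trait first reaching size $K^{1/\a}$ is $w$ with probability proportional to the rate $R(\v,w)$ contributed by paths ending at $w$, giving $R(\v,w)/R(\v)$; a trait $w\notin V_\mut(\v)$ contributes $0$ in the limit because an unfit trait is subcritical and cannot reach $K^{1/\a}$ before some fit mutant does, while a fit trait at distance $>L(\v)$ carries an extra factor $\mu_K$ per additional step and hence a rate of strictly smaller order. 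I expect the main obstacle to be Step 1: controlling the bulk uniformly over the exponentially long exit scale while simultaneously matching the deterministic flux regime inside $V_\a(\v)$ to the stochastic excursion regime outside it at the boundary trait $\g_{\lalpha}$.
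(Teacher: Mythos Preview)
Your proposal is correct and follows essentially the same three-stage architecture as the paper: equilibrium of the bulk inside $V_\a(\v)$ (the paper's Lemma~\ref{lem:equilibriumSize}), pathwise Poisson approximation of mutant arrivals outside $V_\a(\v)$ via subcritical excursions (Lemma~\ref{lem:pathRates} and Corollary~\ref{cor:mutantArrivals}), and then the assembly in Section~\ref{sec:4.3_proofs_one_step} with the survival probability $f(w,\v)/b(w)$ and the exclusion of non-$V_\mut$ traits.

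The one place where your plan diverges from the paper is the mechanism you propose for Step~1. You anticipate needing a regeneration argument---letting the bulk relax back after each failed attempt---to control fluctuations over the long exit scale $1/K\mu_K^{L(\v)}$. The paper avoids this entirely: since $1/K\mu_K^{L(\v)}$ is only polynomial in $K$, it is dominated by the exponential-in-$K$ stability time $\mathrm{e}^{U_\eps K}$ of the resident equilibrium supplied by \cite[Proposition~A.2]{ChMe11}. Hence the bulk simply never leaves the $\eps$-tube up to $T^K\land\mathrm{e}^{U_\eps K}$, and the failed attempts (which involve only $O(1)$ individuals outside $V_\a$) do not perturb it. Your regeneration idea would likely also work, but it is more labour than needed; the large-deviations route is both simpler and what makes the ``main obstacle'' you flag essentially disappear.
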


\begin{remark}
	Note that traits in $w\in V_\alpha(\v)$ do not attain $\beta^K_w=1/\a$ before $T^K_\fix$ due to the assumption that $\alpha\notin\N$. Therefore the probability in \eqref{FixatingTrait} is zero for such traits.
\end{remark}

Once some $w\in V_\mut(\v)$ has reached $\beta^K_w\geq1/\alpha$, the $\ln K$-dynamics evolve as described in \cite{CoKrSm21}, initiated with $\beta^K_w=1/\alpha$ and $\beta^K_u=(1-d(\v,u)/\alpha)_+$, for $u\in V\backslash w$. These dynamics are deterministic and in case they do not terminate early and if they lead to a new ESC, we denote the associated set of resident traits by $\v_\ESC (\v,w)$.

Observe that there is no general formula to express $\v_\ESC (\v,w)$ in terms of $\v$ and $w$ and the parameters of the system. An interesting case is illustrated in Example \ref{ex:5_Selfconn}.

Under the assumption that all traits $w\in V_\mut(\v)$ lead to asymptotic ESCs $\v_\ESC (\v,w)$, we define the stopping time at which one of these asymptotic ESCs is obtained by
\begin{align}
	T^K_\ESC :=\inf\Bigg\{&t\geq T^K_\fix:\exists\ w\in V_\mut(\v):\\
		&\forall u\in V_\alpha(\v_\ESC (\v,w)):\abs{\beta^K_u(t)-\left(1-\frac{d(\v_\ESC (\v,w),u)}{\alpha}\right)}<\eps_K,\notag\\
		&\forall u\notin V_\alpha(\v_\ESC (\v,w)):\beta^K_u(t)=0\Bigg\},
\end{align}
	where we pick $\eps_K=C/\ln K$ for a large enough $0<C<\infty$. Then this definition is precisely in line with the definition of an asymptotic ESC.
\begin{remark}
The minimal necessary $C$ can be made precise using the prefactors of the population sizes in equilibrium, calculated in Lemma \ref{lem:equilibriumSize}. We refrain from doing so here as it is notationally very heavy and does not provide any deeper insight.
\end{remark}

Since the time $T^K_\ESC -T^K_\fix$ is of order $\ln K$, the asymptotics for $T^K_\fix$ translate to $T^K_\ESC$. Moreover, the transition probabilities from one ESC to another can be expressed in terms of the probabilities of traits $w\in V_\mut(\v)$ fixating in the population. For $\w\subset V$ we define
\begin{align}\label{eq:TransitionRate}
	p(\v,\w):=\sum_{\substack{w\in V_\mut(\v):\\\v_\ESC (\v,w)=\w}}\frac{R(\v,w)}{R(\v)}.
\end{align}
Example \ref{ex:0_combined} treats a case where this probability is indeed the sum over multiple mutant candidates $w$.

We can now state the result on transitions between ESCs as a direct corollary of Theorem \ref{Thm:main}.
\begin{corollary}\label{Cor:main}
Suppose the same assumptions as in Theorem \ref{Thm:main} are satisfied. Moreover, assume that, for every $w\in V_\mut(\v)$, the algorithmic description of the $\ln K$-dynamics in Section \ref{proof_logK}, initiated with
	\begin{align}\label{eq:afterFix}
	\beta_u(0)=\begin{cases}\frac{1}{\alpha}&\text{if }u=w\\
	\left(1-\frac{d(\v,u)}{\alpha}\right)_+ &\text{else}\end{cases},
    \end{align}
    does not stop early due to one of its termination criteria and reaches an ESC associated to some traits $\v_\ESC (\v,w)$ after finitely many steps. Then, $T^K_\ESC -T^K_\fix\in O(\ln K)$ and therefore, with the same constants $\eps_0$ and $c$ and with the same random variables $E_+(\eps)$ and $E_-(\eps)$ as in Theorem \ref{Thm:main}, 
    \begin{align}
		\liminf_{K\to\infty}\P(E_-(\eps)\leq T^K_\ESC K\mu_K^{L(\v)}\leq E_+(\eps))\geq 1-c\eps.
	\end{align}
	Moreover, for all $\w\subset V$,
	\begin{align}
		\lim_{K\to\infty}\P(\{u\in V:\beta^K_u(T^K_\ESC )> 1-\eps_K\}=\w)=p(\v,\w).
	\end{align}
\end{corollary}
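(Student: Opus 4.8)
The plan is to treat the statement as a genuine corollary of Theorem~\ref{Thm:main}, by splitting
$T^K_\ESC = T^K_\fix + (T^K_\ESC - T^K_\fix)$ and controlling the second increment through the $\ln K$-convergence of \cite{CoKrSm21}. By Theorem~\ref{Thm:main}, at time $T^K_\fix$ a (random) trait $w\in V_\mut(\v)$ has reached $\beta^K_w(T^K_\fix)=1/\alpha$, with asymptotic probability $R(\v,w)/R(\v)$, while the remaining traits inside $V_\alpha(\v)$ sit at their ESC sizes up to an error of order $1/\ln K$ and every trait outside $V_\alpha(\v)\cup\{w\}$ is still microscopic. The first task would be to verify that this configuration coincides, within the tolerance required to start the $\ln K$-algorithm, with the deterministic initial datum \eqref{eq:afterFix}. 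Since $\alpha\notin\N$, no trait of $V_\alpha(\v)$ can reach $1/\alpha$ before $T^K_\fix$ (cf.\ the remark following Theorem~\ref{Thm:main}), so the threshold $1/\alpha$ is attained for the first time by a single trait $w$ at distance $L(\v)$, and all other traits $u\neq w$ lie at $(1-d(\v,u)/\alpha)_+$ up to the admissible error, which is exactly \eqref{eq:afterFix}.

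Next I would apply the strong Markov property at $T^K_\fix$ and invoke the algorithmic $\ln K$-convergence recalled in Section~\ref{proof_logK}: started from \eqref{eq:afterFix}, the rescaled process $\beta^K(T^K_\fix + t\ln K)$ converges to the deterministic trajectory $\beta(t)$ produced by the growth law \eqref{logK_growth}. By assumption this trajectory does not terminate early and reaches an ESC with resident set $\v_\ESC(\v,w)$ after finitely many steps, i.e.\ at some deterministic macroscopic time $t^\ast(w)<\infty$. Consequently $T^K_\ESC - T^K_\fix \in O(\ln K)$, and during this window no second valley crossing can occur, since any such transition requires a time of order $1/K\mu_K^{L(\v_\ESC(\v,w))} \gg \ln K$.

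From here the two assertions follow quickly. For the time asymptotics, the scale separation $\ln K \cdot K\mu_K^{L(\v)} = \ln K\cdot K^{1-L(\v)/\alpha} \to 0$ (valid since $L(\v)>\alpha$) gives $(T^K_\ESC - T^K_\fix)K\mu_K^{L(\v)}\to 0$ in probability, so $T^K_\ESC K\mu_K^{L(\v)}$ and $T^K_\fix K\mu_K^{L(\v)}$ share the same limiting law; the bounds of Theorem~\ref{Thm:main} then transfer verbatim with the same $\eps_0$, $c$ and the same variables $E_+(\eps)$, $E_-(\eps)$. For the transition probabilities, on the event that $w\in V_\mut(\v)$ triggers $T^K_\fix$ the process ends, by the above, in the asymptotic ESC with $\{u:\beta^K_u(T^K_\ESC)>1-\eps_K\}=\v_\ESC(\v,w)$; summing the fixation probabilities \eqref{FixatingTrait} over all $w$ with $\v_\ESC(\v,w)=\w$ reproduces exactly the definition \eqref{eq:TransitionRate} of $p(\v,\w)$.

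The hard part will be the rigorous restart of the $\ln K$-dynamics at the random time $T^K_\fix$: one must show that the $O(1/\ln K)$ discrepancies in the starting configuration — and in particular the possibly nonzero but negligible sizes of the traits in $V_\mut(\v)\setminus\{w\}$ and of other traits outside $V_\alpha(\v)$ that were populated during failed excursions — do not alter the limiting deterministic trajectory nor the identity of the ESC reached. This requires combining the strong Markov property with a uniform control of these error terms, using that \eqref{logK_growth} is, under the no-early-termination and unique-ESC assumptions, stable under perturbations of microscopic initial data of this order.
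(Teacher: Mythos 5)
Your proposal follows the paper's own proof essentially step for step: identify the state at $T^K_\fix$ with the initial datum \eqref{eq:afterFix}, restart via the strong Markov property, invoke the $\ln K$-convergence of Theorem \ref{Thm_betalogK} to get $T^K_\ESC-T^K_\fix\in O(\ln K)$, use the scale separation $\ln K\cdot K\mu_K^{L(\v)}\to0$ (valid since $L(\v)>\alpha$) to transfer the exponential bounds with the same constants, and sum the fixation probabilities \eqref{FixatingTrait} over all $w$ with $\v_\ESC(\v,w)=\w$ to obtain \eqref{eq:TransitionRate}. Your closing concern about the $O(1/\ln K)$ discrepancies at the restart is legitimate but is in fact absorbed by the hypothesis \eqref{cond-init} of Theorem \ref{Thm_betalogK}, which only requires convergence in probability of the initial exponents.

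There is, however, one concrete step you miss, which the paper flags explicitly. The stopping time $T^K_\ESC$ demands $\beta^K_u(t)=0$ \emph{exactly} for all $u\notin V_\alpha(\v_\ESC(\v,w))$, i.e.\ $N^K_u(t)=0$, whereas Theorem \ref{Thm_betalogK} only yields convergence $\beta^K_u\to0$ for such traits; a vanishing exponent is compatible with a nonzero (subpolynomial) population, in which case $T^K_\ESC$ would not yet be triggered at the end of the $\ln K$-phase. The paper closes this by bounding these residual subpopulations from above by subcritical branching processes, which go extinct within a time of order $1$, so that the conditions of $T^K_\ESC$ are truly satisfied and $T^K_\ESC-T^K_\fix\in O(\ln K)$ indeed holds. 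Without this extinction argument your claim on $T^K_\ESC-T^K_\fix$ does not follow from the convergence theorem alone; the fix is the same subcritical coupling technique you already invoke elsewhere, so the gap is small but should be filled explicitly.
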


\begin{remark}\label{rem:initialESC}
	\begin{itemize}
		\item[(i)] Note that Theorem \ref{Thm:main} and Corollary \ref{Cor:main} only consider a specific transition from the ESC associated to some $\v$ to another ESC. The constants $\eps_0$ and $c$ can however be chosen uniformly for all ESCs by reason of the finite trait graph.
		\item[(ii)] Both results assume that the system starts out in an asymptotic ESC. These are the natural initial conditions, particularly when a first transition between asymptotic ESCs has already occurred. We could however allow for more general initial conditions, as long as they lead to an asymptotic ESC within finitely many steps of the $\ln K$-algorithm.
	\end{itemize}
\end{remark}

\subsection{Multi-scale jump chain and limiting Markov jump processes}
\label{sec:2.3_second_result}

Building on the previous description of a single transition step from one ESC to another,  we now want to describe the multi-step transitions between ESCs as a jump chain $(v^{(k)})_{k\geq 0}$ on a meta-graph. We first introduce the underlying \emph{metastability graph} $\GG_\ESC$, consisting of all sufficiently stable macroscopic equilibrium configurations, and then describe the dynamics of the jump chain. Finally, we give a convergence result that derives different Markov jump processes, depending on the chosen time scale.

\begin{definition}[Metastability graph]
	\label{def:MetastabilityGraph}
	As vertices for the general metastability graph \linebreak $\mathcal{G}_\ESC =\left(\mathcal{V}_\ESC ,\mathcal{E}_\ESC \right)$ we take all sets of resident traits that correspond to an ESC, i.e.\ that have stability degree strictly bigger than $\alpha$, and edges represent possible transitions to other ESCs. More precisely,
	\begin{align}
	\VV_\ESC :=&\dset{\v\subseteq V : L(\v)>\alpha},\\
	\EE_\ESC :=&\dset{(\v,\w):\exists w\in V_\mut(\v) \text{\ s.t.\ } \w=\v_\ESC (\v,w)}.
	\end{align}
\end{definition}
Recall that $\v_\ESC (\v,w)$ stands for the resident traits associated to the new ESC that is attained at the end of the $\ln K$-algorithm being started with resident set $\v$ and invading mutant $w\in V_\mut(\v)$. We already assigned to each vertex $\v\in\VV_\ESC$ the exit rate $R(\v)$ in \eqref{eq:ExitRate} and to each edge $(\v,\w)\in\EE_\ESC$ the transition probability $p(\v,\w)$ in \eqref{eq:TransitionRate}.

Using Corollary \ref{Cor:main}, we can now work out inductively the multi-scale jump chain $(\v^{(k)})_{k\geq0}$ on $\GG_\ESC$. To this end, let $\v^{(0)}\in\mathcal{V}_\ESC$ be the resident traits of the initial ESC that the process starts in and set $T^{(0,K)}_\ESC :=0$. We describe the $k^\text{th}$ transition, for $k\geq 1$, conditioned on the knowledge of $\v^{(k-1)}$. We denote the set of closest fit mutant traits by $V_\mut^{(k)}=V_\mut(\v^{(k-1)})$, the width of the next fitness valley to cross by $L^{(k)}=L(\v^{(k-1)})$, and the exit rate by $R^{(k)}=R(\v^{(k-1)})$. Moreover, we
keep track of the time when the first mutant population fixates and when the next ESC is reached by introducing the stopping times
\begin{align}
T^{(k,K)}_\fix:=\inf\Bigg\{&t\geq T^{(k-1,K)}_\ESC :\exists\ w\in V\backslash V_\a(\v^{(k-1)}):\beta^K_w(t)\geq\frac{1}{\alpha}\Bigg\},\\
T^{(k,K)}_\ESC :=\inf\Bigg\{&t\geq T^{(k,K)}_\fix:\exists\ w\in V_\mut^{(k)}:\nonumber\\
&\forall u\in V_\alpha(\v_\ESC (\v^{(k-1)},w)):\nonumber\\
&\qquad\qquad\abs{\beta^K_u(t)-\left(1-\frac{d(\v_\ESC (\v^{(k-1)},w),u)}{\alpha}\right)}<\eps_K,\notag\\
&\forall u\notin V_\alpha(\v_\ESC (\v^{(k-1)},w)):\beta^K_u(t)=0\Bigg\}.
\end{align}

With this notation, we can now state the result on the $k^\text{th}$ transition of the multi-scale jump chain.

\begin{corollary}
	\label{Cor:kProcess}
	Assume that we constructed the process up to time $T^{(k-1,K)}_\ESC$, when the ESC associated to $\v^{(k-1)}$ is obtained, and suppose the same assumptions as in Theorem \ref{Thm:main} are satisfied. Moreover assume that the  $\ln K$-dynamics behave as in Corollary \ref{Cor:main}, for every $w\in V_\mut^{(k)}$. Then there exist constants $\eps_0>0$ and $0<c<\infty$ such that, for all $0<\eps<\eps_0$, there are exponential distributed random variables $E^{(k)}_+(\eps)$ and $E^{(k)}_-(\eps)$ with parameters $R^{(k)}_\pm(\eps):=R^{(k)}(1\pm c\eps)$ such that
	\begin{align}
	\liminf_{K\to\infty}\P(E^{(k)}_-(\eps)\leq (T_\ESC^{(k,K)}-T_\ESC^{(k-1,K)})K\mu_K^{L^{(k)}}\leq E^{(k)}_+(\eps)\vert\v^{(k-1)})\geq 1-c\eps.
	\end{align}
	Moreover, for all $\w\subset V$,
	\begin{align}
	\lim_{K\to\infty}\P(\{v\in V:\beta^K_v(T^{(k,K)}_\ESC )> 1-\eps_K\}=\w\vert\v^{(k-1)})=p(\v^{(k-1)},\w).
	\end{align}
\end{corollary}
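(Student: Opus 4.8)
The plan is to obtain Corollary \ref{Cor:kProcess} as an inductive, conditional version of Corollary \ref{Cor:main}, with the passage from step $k-1$ to step $k$ governed entirely by the strong Markov property of $N^K$ applied at the stopping time $T^{(k-1,K)}_\ESC$. By hypothesis the process has been constructed up to this time, at which the resident set $\v^{(k-1)}\in\VV_\ESC$ has been reached. Writing $\mathcal{F}_t$ for the natural filtration of $N^K$, the times $T^{(k-1,K)}_\ESC$, $T^{(k,K)}_\fix$ and $T^{(k,K)}_\ESC$ are $\mathcal{F}_t$-stopping times, so conditioning on $\mathcal{F}_{T^{(k-1,K)}_\ESC}$ reduces the $k^{\text{th}}$ increment to the behaviour of a fresh copy of the process started from the state $N^K(T^{(k-1,K)}_\ESC)$.

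The key step is to recognise this restart state as an asymptotic ESC associated to $\v^{(k-1)}$. On the event $\{v\in V:\beta^K_v(T^{(k-1,K)}_\ESC)>1-\eps_K\}=\v^{(k-1)}$, the defining conditions of $T^{(k-1,K)}_\ESC$ give
\begin{align}
&\abs{\beta^K_u(T^{(k-1,K)}_\ESC)-\left(1-\tfrac{d(\v^{(k-1)},u)}{\alpha}\right)}<\eps_K \quad\text{for } u\in V_\alpha(\v^{(k-1)}),\\
&\beta^K_u(T^{(k-1,K)}_\ESC)=0 \quad\text{for } u\notin V_\alpha(\v^{(k-1)}).
\end{align}
Since $\eps_K=C/\ln K\in O(1/\ln K)$ and $\v^{(k-1)}\in\VV_\ESC$ satisfies $L(\v^{(k-1)})>\alpha$, hence \eqref{ESCfitness}, the restart state is precisely of the form required in Definition \ref{def:ESC}(ii). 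In particular all traits outside $V_\alpha(\v^{(k-1)})$ vanish at the restart time, so $T^{(k,K)}_\fix>T^{(k-1,K)}_\ESC$ and the increment $T^{(k,K)}_\ESC-T^{(k-1,K)}_\ESC$ is non-degenerate.

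Having identified the restart state, observe that the definitions of $T^{(k,K)}_\fix$ and $T^{(k,K)}_\ESC$ are exactly the time-shifts by $T^{(k-1,K)}_\ESC$ of $T^K_\fix$ and $T^K_\ESC$ with $\v$ replaced by $\v^{(k-1)}$. The strong Markov property therefore identifies the conditional law of $(T^{(k,K)}_\ESC-T^{(k-1,K)}_\ESC)K\mu_K^{L^{(k)}}$, given $\mathcal{F}_{T^{(k-1,K)}_\ESC}$, with the law of $T^K_\ESC K\mu_K^{L(\v^{(k-1)})}$ for the single-transition problem started in the asymptotic ESC $\v^{(k-1)}$. Corollary \ref{Cor:main} then yields the exponential sandwiching between $E^{(k)}_-(\eps)$ and $E^{(k)}_+(\eps)$ with parameters $R^{(k)}(1\pm c\eps)=R(\v^{(k-1)})(1\pm c\eps)$, together with the identification of the limiting transition probabilities with $p(\v^{(k-1)},\w)$. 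That the constants $\eps_0$ and $c$ may be chosen once and for all, independently of $k$ and of the realised $\v^{(k-1)}$, follows from the finiteness of the trait graph: this makes $\VV_\ESC$ finite and allows the constants of Theorem \ref{Thm:main} to be taken uniform over all ESCs (Remark \ref{rem:initialESC}(i)), so one simply takes the worst case over the finitely many $\v\in\VV_\ESC$.

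The main obstacle is not the Markov bookkeeping but the fact that the restart state is \emph{random} and only an asymptotic ESC up to the error $\eps_K$, so Corollary \ref{Cor:main} — whose hypothesis is a single deterministic asymptotic ESC — must be applied uniformly over the whole family of admissible restart configurations. This requires verifying that the conclusion of Theorem \ref{Thm:main} depends on the initial data only through the $O(1/\ln K)$ error bound and the value of $\v^{(k-1)}$, and that the residual randomness in the near-equilibrium population sizes $N^K_v\approx K\bar{n}_v(\v^{(k-1)})$ and in the microscopic profiles within $V_\alpha(\v^{(k-1)})$ affects neither the limiting rate $R(\v^{(k-1)})$ nor the transition probabilities. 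Granting this uniformity — which is built into the proof of Theorem \ref{Thm:main} and into the uniform choice of constants over the finite set $\VV_\ESC$ — the conditional statement for the $k^{\text{th}}$ step follows directly.
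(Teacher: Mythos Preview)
Your proof is correct and follows essentially the same approach as the paper: the paper also argues that the state at $T^{(k-1,K)}_\ESC$ is again an asymptotic ESC, invokes the strong Markov property, and applies Corollary~\ref{Cor:main} inductively to obtain the $k^{\text{th}}$ transition. Your write-up is in fact more explicit than the paper's, in particular in spelling out why the restart configuration meets Definition~\ref{def:ESC}(ii) and in flagging the uniformity issue over random initial data, which the paper leaves implicit in its appeal to Remark~\ref{rem:initialESC}(i).
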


The preceding corollary allows us to construct a limiting random jump chain $(\v^{(k)})_{k\geq 0}$ on the metastability graph $\GG_\ESC$. To be precise, given the current state $\v^{(k-1)}$, the next ESC $\v^{(k)}$ is taken at random from $\VV_\ESC$ with probability distribution $p(\v^{(k-1)},\cdot)$. However, the jumps take place on varying time scales of type $1/K\mu_K^{L^{(k)}}$.
The construction is valid until an ESC is obtained such that some mutant $w\in V_\mut^{(k)}$ does not induce a unique new ESC, following the deterministic $\ln K$-dynamics.
A visualisation of the metastability graph including a particular realisation of the jump chain is given in Figure \ref{fig:MetastabilityGraph}.

\begin{figure}[h]
	\centering
	\includegraphics[width=.95\textwidth]{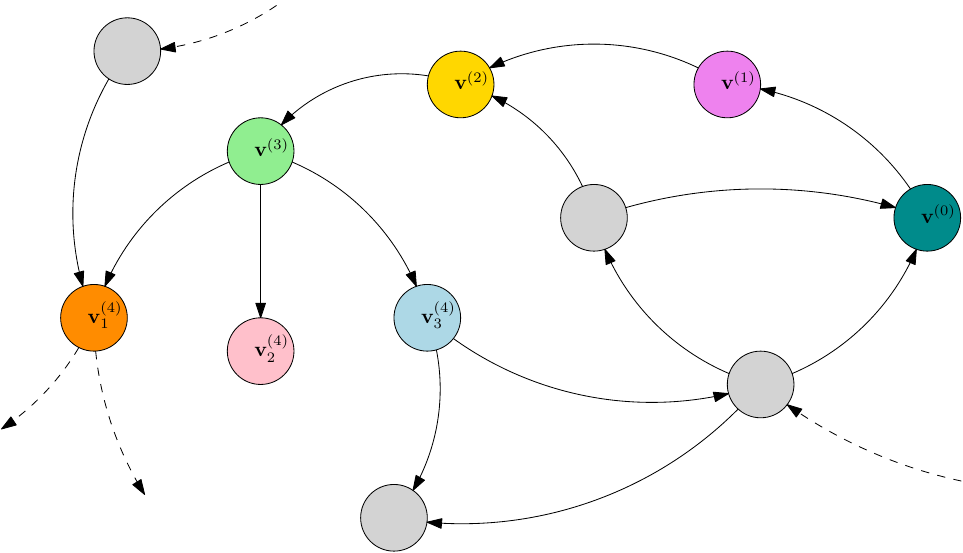}
	\caption{Metastability graph $\GG_\ESC$ including a jump chain $(\v_k)_{k\geq 0}$, where $\v^{(4)}_i=\v_\ESC (\v^{(3)},w_i)$, for $V_\mut(\v^{(3)})=\{w_1,w_2,w_3\}$.}
	\label{fig:MetastabilityGraph}
\end{figure}
	
After this general description of the multi-scale jump chain we can now easily elaborate the true Markov jump process on each time scale. To be more precise, for each stability degree $L>\alpha$, we are looking for the limit process of $N^K_w(t/K\mu_K^L)/K$, for $K\to\infty$. The support of this process jumps between sets of coexisting traits of sufficiently high stability degree, which can only be exited on this time scale. In this context, we define the level sets of equal stability degree as
\begin{align}
\SS^L:=\dset{\v\subseteq V :  LVE_+(\v)=\{\bar{n}(\v)\}, L(\v)=L}.
\end{align}

Note that, for $L>\alpha$, a stability degree of $L(\v)=L$ ensures that the coexisting traits $\v$ allow for an asymptotic ESC, see Remark \ref{rem:ESCstability}.

As the state space for the limiting jump process, we introduce the \emph{$L$-scale graph} $\GG^L$, which is a collapsed version of $\GG_\ESC$. The vertex set consists of all ESCs that are stable enough to be visible on the respective time scale. Therefore, we set
\begin{align}
	\label{eq:Def_VL}
	\VV^L:=\bigcup_{L'\geq L}\mathcal{S}^{L'}.
\end{align}

Note that it is possible that the process jumps into an ESC  $\v\in \mathcal{S}^{L'}$, for $L'>L$, on the $1/K\mu_K^L$-time scale. However, there is no possibility to escape from those on this time scale, which means that these are absorbing states.

Edges $\EE^L$ in $\GG^L$ represent possible transitions of the limiting process. To construct these, we study the limiting jump chain from Corollary \ref{Cor:kProcess}.

In order to use the corollary and in particular the process $(\v^{(k)})_{k\geq0}$, we have to ensure that, for fixed $L>\alpha$, this process always reaches an ESC of stability degree at least $L$ in finitely many steps. 

\begin{assumption}
	\label{Ass:NoCycles}
	\begin{align}
	\forall\v\in\mathcal{S}^L: \Prob{\exists k\in\N_{>0}:L(\v^{(k)})\geq L\vert\v^{(0)}=\v}=1
	\end{align}
\end{assumption}

Note that, if this assumption is satisfied for some fixed $L$, this has no implications for the validity for different $L'\neq L$. This is due to the fact, that only the initial conditions $\v\in\SS^L$ are considered. One can easily think of counterexamples where $\GG_\ESC$ is non-connected such that there may be cycles of lower time scale but there is no danger to run into them. For a broader discussion of the assumption we refer to the Examples \ref{ex:6_Assumption} and \ref{ex:7_Assumption}.

\begin{remark}
	\label{rem:EscapeFromCycle}
	If the process runs into a cycle or stable cluster on a lower time scale, there are still possibilities to escape from these by accelerating and looking at higher time scales. The detailed description of such behaviour is much more involved. This is mainly due to technical reasons: Errors accumulate in the approximation of each transition step. As long as it is ensured that the system reaches a (sufficiently stable) ESC after finitely many steps, these errors can be iteratively bounded to ensure convergence. This however fails if the number of lower time scale transitions between higher time scale jumps is not bounded. Heuristically, if one can observe ergodic behaviour on the $L'$-scale graph, for some $L'<L$, transitions out of the ergodic cluster will occur along one of the shortest fitness valleys of width $L$. Transition rates will be weighed according to the stationary distribution on states in $\mathcal{S}^{L'}$ and the transition takes a time of order $1/K\mu_K^L$. Rather than defining vertices of $\mathcal{G}^L$ as single sets of coexisting traits in $\mathcal{S}^L$, one would then choose communication classes of such sets in $\mathcal{S}^{L'}$ (for possibly multiple $L'<L$) that support an ergodic stationary distribution. Rigorously justifying this argument is a topic of current and future research of the authors.
\end{remark}

Now asking for possible jumps in $\mathcal{G}^L$ we have to respect again the principle that jumps on lower time scales are absorbed in those happening on the $1/K\mu_K^L$-time scale. This means that the critical event for a transition starting in $\v\in\mathcal{S}^L$ is to escape from $\v$, which needs a time of order $1/K\mu_K^L$. Compared to that, the subsequent transitions in $\mathcal{G}_\ESC$ until reaching again a state $\w$ of stability at least $L(\w)\geq L$ take place in very short time. Therefore we say that the (directed) eged $(\v,\w)$ is in $\mathcal{E}^L$ if and only if $L(\v)=L$ and there exists a finite path $\Gamma:\v\to\w$ in $\mathcal{G}_\ESC$ such that $L(\Gamma_i)<L,\ \forall 1\leq i < \abs{\Gamma}$.

The probability of possible transitions $(\v,\w)\in\mathcal{E}^L$ is then the sum over all possible paths $\Gamma$ that give rise to this edge, while the probability of taking a particular path is easily computed as the product of its segments in $\mathcal{G}_\ESC$.
\begin{align}
\label{eq:ProbLTimescale}
p^L(\v,\w):=
\sum_{\substack{\Gamma:\v\to\w\\ L(\Gamma_i)<L,\ \forall 1\leq i < \abs{\Gamma}}}
\prod_{i=1}^{\abs{\Gamma}}p(\Gamma_{i-1},\Gamma_{i})
\end{align}
For an explanatory computation of these probabilities we refer to the Examples \ref{ex:8_LargeSmallLarge} and \ref{ex:9_MultEscLowStab}.

The transition rate for the jumps on the $1/K\mu^L$ time scale are then given by the over-all rate to escape from $\v$ weighted with the transition probability to end in $\w$.
\begin{align}
\label{eq:RatesLTimescale}
\mathcal{R}^L(\v,\w):=R(\v)p^L(\v,\w)
\end{align}

Now we are prepared to formulate the main result, i.e.\ the convergence to a Markov jump process on different time scales.

\begin{theorem}
	\label{Thm:Lscale}
	Let $L>\alpha$ such that Assumption \ref{Ass:NoCycles} holds true and take $\v^L(0)\in\VV^L$.
	Suppose the same assumptions as in Theorem \ref{Thm:main} are satisfied for $\v=\v^L(0)$ and assume that the  $\ln K$-dynamics behave as in Corollary \ref{Cor:main}, for every $\v\in\bigcup_{L'\leq L}\SS^{L'}$ and all $w\in V_\mut(\v)$.
	Then, for all $T<\infty$, the rescaled process $(N^K_v(t/K\mu_K^L)/K,\ v\in V,\ t\in[0,T])$ converges in the sense of finite dimensional distributions to a jump process of the form
	\begin{align}
	\mathcal{N}_v^L(t)=\ifct{v\in \v^L(t)}\bar{n}_v(\v^L(t)), && v\in V,\ t\in[0,T].
	\end{align}
	Here $(\v^L(t),\ t\in[0,T])$ is a Markov jump process on the $L$-scale graph \mbox{$\mathcal{G}^L=(\mathcal{V}^L,\mathcal{E}^L)$}, with transition rates given by \eqref{eq:RatesLTimescale}.
\end{theorem}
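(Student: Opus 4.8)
The plan is to deduce Theorem \ref{Thm:Lscale} from the one-step description in Corollary \ref{Cor:kProcess} by a time-scale separation argument that collapses the full jump chain $(\v^{(k)})_{k\geq 0}$ on $\GG_\ESC$ onto the $L$-scale graph $\GG^L$. The starting observation is that, after rescaling the original time by $K\mu_K^L$ (so that an original duration $\Delta$ becomes a rescaled duration $\Delta\, K\mu_K^L$), the $k$-th inter-ESC waiting time reads
\[
(T^{(k,K)}_\ESC - T^{(k-1,K)}_\ESC)\, K\mu_K^L
= \bigl[(T^{(k,K)}_\ESC - T^{(k-1,K)}_\ESC)\, K\mu_K^{L^{(k)}}\bigr]\cdot \mu_K^{\,L-L^{(k)}},
\]
where the bracketed factor is asymptotically exponential with rate $R^{(k)}$ by Corollary \ref{Cor:kProcess}. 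Since $\mu_K=K^{-1/\alpha}\to 0$, the prefactor $\mu_K^{\,L-L^{(k)}}$ tends to $0$ when $L^{(k)}<L$, equals $1$ when $L^{(k)}=L$, and tends to $\infty$ when $L^{(k)}>L$. Thus, on this time scale, transitions out of states of stability strictly below $L$ become instantaneous, transitions out of stability-$L$ states remain visible with the correct exponential law of rate $R^{(k)}$, and states of stability strictly above $L$ act as absorbing states. Because stability degrees are integers and intermediate ESCs satisfy $\alpha<L(\v)<L$, one in fact has $L-L(\v)\geq 1$, so each instantaneous step contributes rescaled time at most $\mu_K$ times an $O_P(1)$ factor.

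First I would fix $\v^L(0)=\v^{(0)}\in\VV^L$ and, using Assumption \ref{Ass:NoCycles}, record that the chain $(\v^{(k)})$ returns to stability $\geq L$ after a.s.\ finitely many steps, which makes the collapse well-defined. I would then extract the \emph{visible} sub-chain by setting $\kappa_0=0$ and, as long as $L(\v^{(\kappa_n)})=L$ (so that $\v^{(\kappa_n)}$ is escaped on the $1/K\mu_K^L$-scale), letting $\kappa_{n+1}$ be the first index $k>\kappa_n$ with $L(\v^{(k)})\geq L$; if $L(\v^{(\kappa_n)})>L$ the visible chain is absorbed. Summing the products $\prod_i p(\Gamma_{i-1},\Gamma_i)$ over all excursion paths $\Gamma$ through stability-$<L$ states identifies the collapsed transition probability as exactly $p^L(\v^{(\kappa_n)},\w)$ of \eqref{eq:ProbLTimescale}, and the conditional Markov statement of Corollary \ref{Cor:kProcess} makes $(\v^{(\kappa_n)})_n$ a Markov chain with these probabilities. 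The rescaled waiting time $(T^{(\kappa_{n+1},K)}_\ESC-T^{(\kappa_n,K)}_\ESC)\,K\mu_K^L$ is dominated by the single visible escape from $\v^{(\kappa_n)}$, asymptotically exponential with rate $R(\v^{(\kappa_n)})$ by the trichotomy above, while the intervening stability-$<L$ excursions contribute a total rescaled time bounded by (excursion length)$\times\mu_K\times O_P(1)=o_P(1)$. Combining the escape rate $R(\v)$ with the jump law $p^L(\v,\cdot)$ yields precisely the rate $\mathcal{R}^L(\v,\w)=R(\v)p^L(\v,\w)$ of \eqref{eq:RatesLTimescale}.

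To pass from the jump-chain description to finite-dimensional distributions of $(N^K_v(t/K\mu_K^L)/K)_v$, I would use that during each ESC phase the resident traits sit at their coexistence equilibrium, so the rescaled coordinate is close to $\bar n_v(\v^{(\kappa_n)})$ for resident $v$ and to $0$ otherwise. The $O(\ln K)$-length transition phases (reaching a new ESC) have rescaled duration $\ln K\cdot K\mu_K^L=K^{1-L/\alpha}\ln K\to 0$, and the stability-$<L$ excursions have rescaled duration $o_P(1)$; hence for any fixed $t\in[0,T]$ the probability that $t$ falls inside a transition or excursion phase tends to $0$, so at $t$ the rescaled process equals the equilibrium of the current visible state with probability tending to $1$. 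Concatenating the single-step convergence of Corollary \ref{Cor:kProcess} via the strong Markov property at the times $T^{(\kappa_n,K)}_\ESC$, and invoking the $\eps$-sandwich $E_\pm(\eps)$ with $\eps\to 0$ taken \emph{after} $K\to\infty$, gives convergence of the joint law at finitely many times to that of $\mathcal{N}^L$.

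The hard part will be making the time-scale separation uniform across the random number of steps. Two points require care. First, Corollary \ref{Cor:kProcess} holds only with probability $\geq 1-c\eps$ and with distorted rates $R^{(k)}(1\pm c\eps)$; concatenating the $J$ visible transitions in $[0,T]$ compounds both the error probability and the rate distortion, so one must control $J$ and send $\eps\to 0$ last. Here $J$ is tight, being stochastically dominated by a Poisson variable of parameter $T\max_{\v\in\VV_\ESC}R(\v)<\infty$ since the graph is finite and escape rates are uniformly bounded. Second, bounding the total rescaled duration of the stability-$<L$ excursions requires interchanging the limit $K\to\infty$ with a sum over the a.s.-finite but unbounded excursion length; this is handled by truncating the excursion length at a deterministic level $M$ (whose exceedance probability tends to $0$ uniformly over the finite set of starting states by Assumption \ref{Ass:NoCycles}), bounding the truncated total as $M\cdot\mu_K\cdot O_P(1)=o_P(1)$ for fixed $M$, and sending $M\to\infty$ afterwards.
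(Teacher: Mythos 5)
Your proposal is correct and follows essentially the same route as the paper's proof: deducing the result from Corollary \ref{Cor:kProcess} via the strong Markov property, observing that the rescaled transition time along a path $\G$ through ESCs of stability degree strictly below $L$ is dominated by the first escape (since $\mu_K^{\,L-L^{(k)}}\to0$ for the intermediate steps), summing the products of one-step probabilities over such paths to obtain $p^L(\v,\w)$ and $\mathcal{R}^L(\v,\w)=R(\v)p^L(\v,\w)$, and noting that the $O(\ln K)$ transition phases and lower-scale excursions vanish under the rescaling, so the finite-dimensional distributions converge to those of the equilibrium-valued jump process. Your additional uniformity arguments (Poisson domination of the number of visible jumps, truncation of excursion lengths at level $M$ with $\eps\to0$ and $M\to\infty$ taken after $K\to\infty$) are refinements of steps the paper treats implicitly, not a different method.
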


\begin{remark}
	We like to point out that Assumption \ref{Ass:NoCycles} does not exclude the cases where we have cycles in $\GG^L$, i.e.\ on the time scale $1/K\mu_K^L$. It only prevents the process from running into a cycle of lower time scale. We even allow for self connecting edges, i.e.\ edges of the form $(\v,\v)$.
\end{remark}


\section{Interesting examples}
\label{sec:3_examples}

In this chapter, we present and analyse a variety of examples that aim to highlight different aspects of the complicated dynamics covered in our main results. The first two
examples are dedicated to single transition steps from one ESC to another, applying the results of Theorem \ref{Thm:main} and Corollary \ref{Cor:main}. The next three examples focus on the metastability graph $\GG_\ESC$ that is constructed in Corollary \ref{Cor:kProcess} and we study two cases that are concerned with Assumption \ref{Ass:NoCycles}. The final two examples are focussed on applications of Theorem \ref{Thm:Lscale}, studying the limiting Markov jump processes on different time scales as well as the $L$-scale-graphs $\GG^L$.

In order to give a manageable and clear description of the dynamically changing fitness landscape, we introduce some new notation that helps to simplify the set up of the examples.

\begin{definition}
	We speak of a \emph{regime of equal competition} if and only if\linebreak $c(v,w)\equiv \text{const}>0$, for all $v,w\in V$.
\end{definition}

This is by no means a necessary assumption to produce the studied phenomena, however, it allows us to characterise the fitness landscape in a much simpler way. In the case of equal competition, the invasion fitness of a trait $w$ with respect to a single resident trait $v$ is fully characterised by
\begin{align}
	f(w,v)=r(w)-r(v),
\end{align}
where we set $r(v):=b(v)-d(v)$ as the \textit{individual fitness} of trait $v$, i.e.\ its net growth rate in the absence of competitive interactions. As a consequence, traits $w$ with higher individual fitness than the resident $v$ are able to invade the population. Hence, instead of specifying the invasion fitnesses for all possible resident traits, the fitness landscape is fully described by the individual fitnesses $r(v)$.

To specify the fitness relations between different traits - in particular in the case of non-equal competition - we introduce the following notation.
\begin{definition}
	For $v,w\in V$, we write $v\ll w$ if and only if $f(w,v)>0$ and $f(v,w)<0$.
	Moreover, we write $v_1,...,v_k\ll w_1,...,w_l$ whenever $v_i\ll w_j$, for all $1\leq i\leq k$ and $1\leq j\leq l$.
\end{definition}
This reflects the case where the equilibrium of the Lotka-Volterra system involving $v$ and $w$ is the monomorphic equilibrium $\bar{n}(w)$ of $w$. In other words $w$ can invade the $v$ population and fixate.

\subsection{Single transition steps}

\subsubsection{A first example with multiple mutation paths}
	\begin{example}
		\label{ex:0_combined}
		Let us consider the directed graph $G$ depicted in Figure \ref{fig:example0_setting}. Assume equal competition and the individual fitness $r$ plotted in Figure \ref{fig:example0_setting}. Moreover, let $\a\in(1,2)$.
	\end{example}
	\begin{figure}[h]
		\centering
		\includegraphics[scale=0.6]{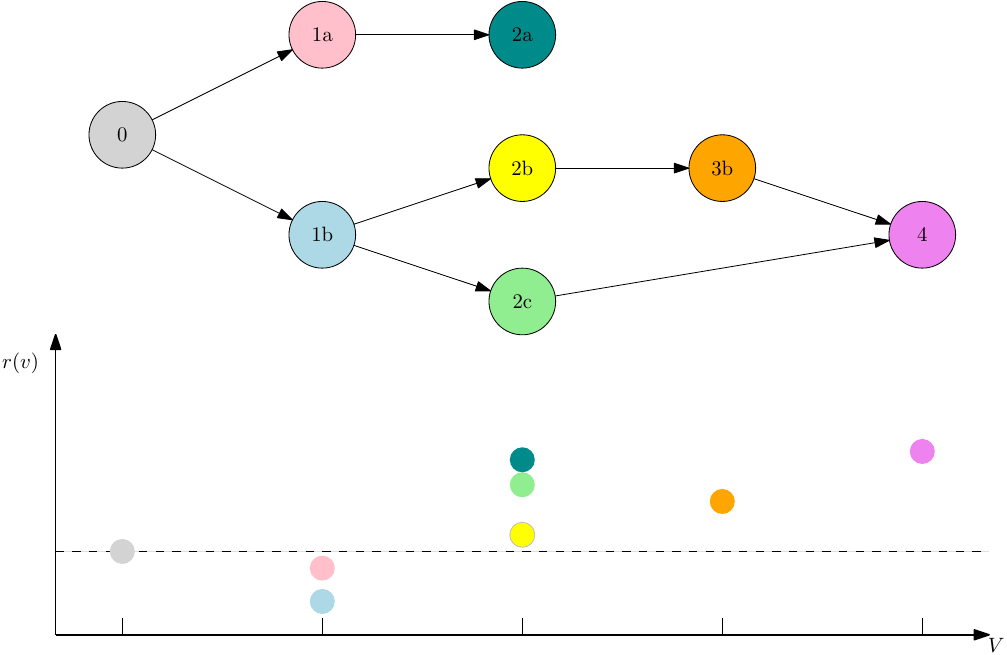}
		\caption{Trait graph $G$ and fitness landscape in terms of individual fitness $r$ of Example \ref{ex:0_combined}.}
		\label{fig:example0_setting}
	\end{figure}
		In this case, the initial resident trait 0 has stability degree $L(\{0\})=2>\a$. This is due to the fact that traits $1a$ and $1b$ are unfit in presence of the resident, while traits $2a$, $2b$ and $2c$ are fit, with connecting paths $\g^{A}=(0,1a,2a)$, $\g^{B}=(0,1b,2b)$ and $\g^{C}=(0,1b,2c)$ of length 2 respectively.
		Therefore, we have the possible mutant candidates $V_\mut(\{0\})=\dset{2a,2b,2c}$.
		An application of Theorem \ref{Thm:main} yields that we can observe a new fixating trait at rescaled time $T^K_\fix K\mu^2_K$, which is distributed approximately as a exponential random variable with rate $R(\{0\})=R(\{0\},2a)+R(\{0\},2b)+R(\{0\},2c)$. The probability for say trait $2b$ to be the trait that fixates in the population and triggers the stopping time is $R(\{0\},2b)/R(\{0\})$.
		
		Asking for the new ESCs, which are reached after fixation, we have to take into account the subsequent evolution on the $\ln K$ time scale. This allows for jumps towards traits of higher fitness, which are in the mutation spreading neighbourhood, i.e.\ direct neighbours in this case. Therefore, we end up with
		\begin{align}
			\v_\ESC (\{0\},2a)=\{2a\}\quad
			\v_\ESC (\{0\},2b)=\{4\},\quad
			\v_\ESC (\{0\},2c)=\{4\}.
		\end{align}
		In particular, note that $\{2b\},\{2c\}$ are not ESCs and thus not part of the metastability graph $\GG_\ESC$ as plotted in Figure \ref{fig:example0_graphs}.
		
		This puts us into the setting where the sum in \eqref{eq:TransitionRate} becomes relevant. In particular, despite the micro-evolutionary branching from $1$ into $2b$ and $2c$ in the trait graph $G$ , there is no such branching on the macro-evolutionary level in $\GG_\ESC$. There, we only observe a transition from $\{0\}$ to $\{4\}$.
		Note also that the different path lengths of $2a\to 4$ and $2b\to 4$ do not matter for the asymptotics of the time $T_\ESC$ until stabilising in the new ESC. This is because this time is dominated by the waiting time $T_\fix$ for the first fixation of a fit mutant trait. Since $L(\{0\})=2$, this time is of order $1/K\mu_K^2$ and thus absorbs the much faster $\ln K$ evolution.
		
		Note that, since all transitions between ESCs occur on the time scale $1/K\mu_K^2$ here, the metastability graph $\GG_\ESC$ agrees with the 2-scale graph $\GG^2$.
		\begin{figure}[h]
			\centering
			\includegraphics[scale=0.55]{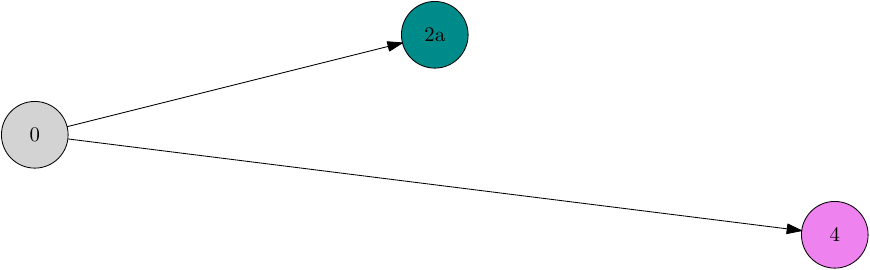}
			\caption{Metastability graph $\GG_\ESC$ and 2-scale graph $\GG^2$ of Example \ref{ex:0_combined}}
			\label{fig:example0_graphs}
		\end{figure}
	
\subsubsection{An ESC with coexistence}
	Since in this paper we discuss the occurrence of metastable behaviour in a rather general setting, we like to point out that Definition \ref{def:ESC} explicitly allows for ESCs $\v$ that consist of several coexisting traits. This clearly enlarges the mutation spreading neighbourhood $V_\a(\v)$ and changes the set of mutant candidates $V_\mut$ in a non-trivial way.
	\begin{example}
		\label{ex:4_Coex}
		Let us consider the undirected graph $G$ depicted in Figure \ref{fig:example4}. Let $\a\in(1,2)$ and consider a fitness landscape that satisfies
		\begin{align}
		f(0,3),f(3,0)>0,\label{Ass_coex}\\
		f(1,\{0,3\}),f(2,\{0,3\})<0,\\
		f(4,\{0,3\}),f(5,\{0,3\})>0,\\
		0,1,2,3\ll 4,5\\
		1,2\ll0,3\\
		f(4,5),f(5,4)<0,
		\end{align}
		and allows for no polymorphic coexistence equilibria apart from $\{0,3\}$.
		Moreover, assume that the unique stable equilibrium of the Lotka-Volterra system involving traits $\{0,3,4\}$ is $\bar{n}(4)$  and the same is true for 5 replacing 4.
	\end{example}
	\begin{figure}[h]
		\centering
		\includegraphics[scale=0.6]{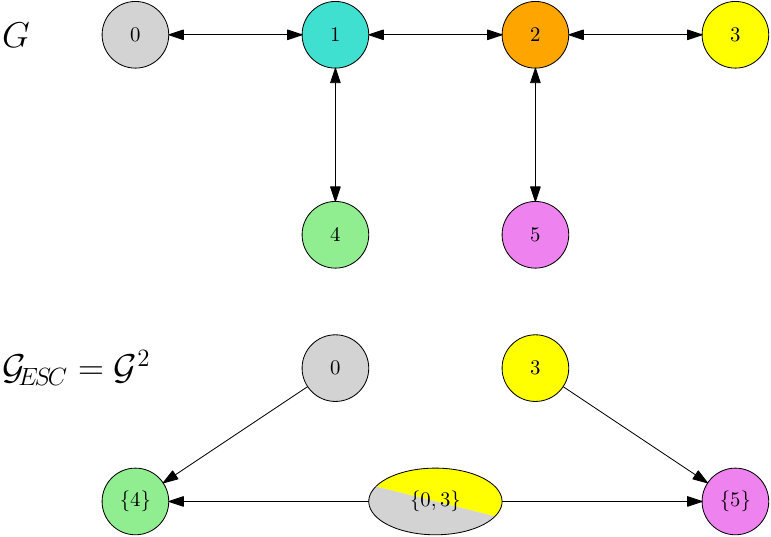}
		\caption{Trait graph $G$ and metastability graph $\GG_\ESC$ (which agrees with the 2-scale graph $\GG^2$) of Example \ref{ex:4_Coex}}
		\label{fig:example4}
	\end{figure}

	Checking for traits that do not have any fitter direct neighbours, and hence do not allow for transitions on the $\ln K$-time scale, the monomorphic ESCs in this case correspond to $\{0\}$, $\{3\}$, $\{4\}$, and $\{5\}$. Classical results on Lotka-Volterra systems yield that under assumption \eqref{Ass_coex} traits 0 and 3 can coexist, i.e.\ $\bar{n}(\{0,3\})\in\R_{>0}^2$. Now the mutation spreading neighbourhood is given by $V_\a(\{0,3\})=\{0,1,2,3\}$. Apart from the resident traits themselves, those traits are by assumption unfit with respect to $\{0,3\}$ and thus $\{0,3\}$ allows for an ESC.
	
	 Looking for the stability degree and possible mutant candidates, the assumptions on the fitness landscape imply that
	\begin{align}
		L(\{0,3\})=2\quad\text{and}\quad V_\mut(\{0,3\})=\{4,5\}.
	\end{align}
	By Theorem \ref{Thm:main}, we can observe a fixating mutant population of one of the traits\linebreak $w\in V_\mut(\{0,3\})$ on the time scale $1/K\mu_K^2$.  The corresponding rates are given by
	\begin{align}
		R(\{0,3\},4)=\bar{n}_0(\{0,3\})\frac{b(0)m(0,1)}{\abs{f(1,\{0,3\})}}b(1)m(1,4)\frac{f(4,\{0,3\})}{b(4)}, &&\text{for } w=4,\\
		R(\{0,3\},5)=\bar{n}_3(\{0,3\})\frac{b(3)m(3,2)}{\abs{f(2,\{0,3\})}}b(2)m(2,5)\frac{f(5,\{0,3\})}{b(5)}, &&\text{for } w=5.
	\end{align}
	Note that, although there are also paths connecting $3\to4$ and $0\to5$, only the paths of shortest length $\abs\g=2$ do have an impact on the above rates.
	
	To conclude this example, we see that both mutant traits $4$ and $5$ are fit enough to invade the coexisting resident population.  Overall, we obtain the metastability graph $\GG_\ESC$ pictured in Figure \ref{fig:example4}, which in this case agrees with the 2-scale graph $\GG^2$. Note that the traits 0 and 3 appear both as monomorphic ESCs, as well as a polymorphic coexistence ESC.

\subsection{Successive metastable transitions}

\subsubsection{Self connection in $\GG_\ESC$}
	By definition of an ESC, the first fixating mutant has a distance of at least $\lalpha+1$ from the corresponding resident traits. Despite this fact, the $\ln K$-mechanism triggered by such a mutant may lead to a new ESC that is closer to the old one than $\lalpha+1$. It can even be the same and thus give rise to a self-connecting edge in $\GG_\ESC$
	\begin{example}
		\label{ex:5_Selfconn}
		Let us consider the directed graph depicted in Figure \ref{fig:example5} and take $\a\in(1,2)$. Consider a fitness landscape that satisfies
		\begin{align}
		0\ll 2\ll 4\ll 5\ll 2,\\
		1\ll2,\qquad3\ll4,\\
		f(1,0),f(3,2),f(3,5)<0
		\end{align}
		and assume that there are no polymorphic coexistence equilibria.
	\end{example}
	\begin{figure}[h]
		\centering
		\includegraphics[scale=0.6]{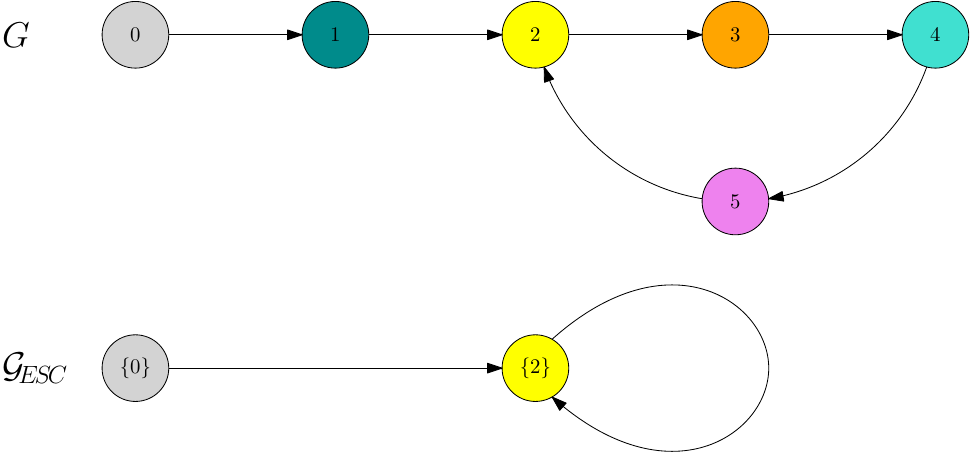}
		\caption{Trait graph $G$ and metastability graph $\GG_\ESC$ of Example \ref{ex:5_Selfconn}}
		\label{fig:example5}
	\end{figure}

	After a first jump from $\v^{(0)}=\{0\}$ to $\v^{(1)}=\{2\}$ on the time scale $1/K\mu_K^2$, the next fixating mutant is of trait $4$ and arises on the same time scale. The chosen fitness landscape ensures that it grows and can invade the population of trait $2$ within a $\ln K$-time. Since $\a\in(1,2)$, we obtain a non-vanishing population of trait $5$ on the same time scale, which can grow as soon as trait 4 is the new resident trait. Due to its positive invasion fitness, $5$ invades the trait $4$ population. Finally, the same argument applies for an invasion by trait $2$, where we then get stuck in because $\{2\}$ is an ESC of stability degree $L(\{2\})=2>\a$.
	
	Overall, we obtain that
	\begin{align}
		 \v^{(2)}=\v_\ESC (\{2\},4)=\{2\}.
	\end{align}
	In view of Definition \ref{def:MetastabilityGraph}, this gives rise to the self-connecting edge $(\{2\},\{2\})\in\GG_\ESC$, which is illustrated in Figure \ref{fig:example5}.

\subsubsection{On Assumption \ref{Ass:NoCycles}}
		Since the assumption that prevents 
		the process from getting stuck on a slower time scale is somewhat involved, we give two examples. First, we illustrate in Example \ref{ex:6_Assumption} that Assumption \ref{Ass:NoCycles} may hold true even if there is a cycle in the metastability graph $\GG_\ESC$. Second, we slightly modify the trait graph $G$ and the fitness landscape to get Example \ref{ex:7_Assumption}, where Assumption \ref{Ass:NoCycles} is not satisfied, and explain why this leads to difficulties.
	\begin{example}
		\label{ex:6_Assumption}
		Let us consider the directed graph depicted in Figure \ref{fig:example6}. Let $\a\in(0,1)$ and consider a fitness landscape that satisfies
		\begin{align}
		0\ll 2\ll 3\ll 4\ll 6,\\
		1\ll2,\qquad  5\ll6,\\
		3\ll 7\ll 2,\\
		f(1,0),f(5,4)<0
		\end{align}
		and assume that there are no polymorphic coexistence equilibria.
	\end{example}
	\begin{figure}[h]
		\centering
		\includegraphics[scale=0.6]{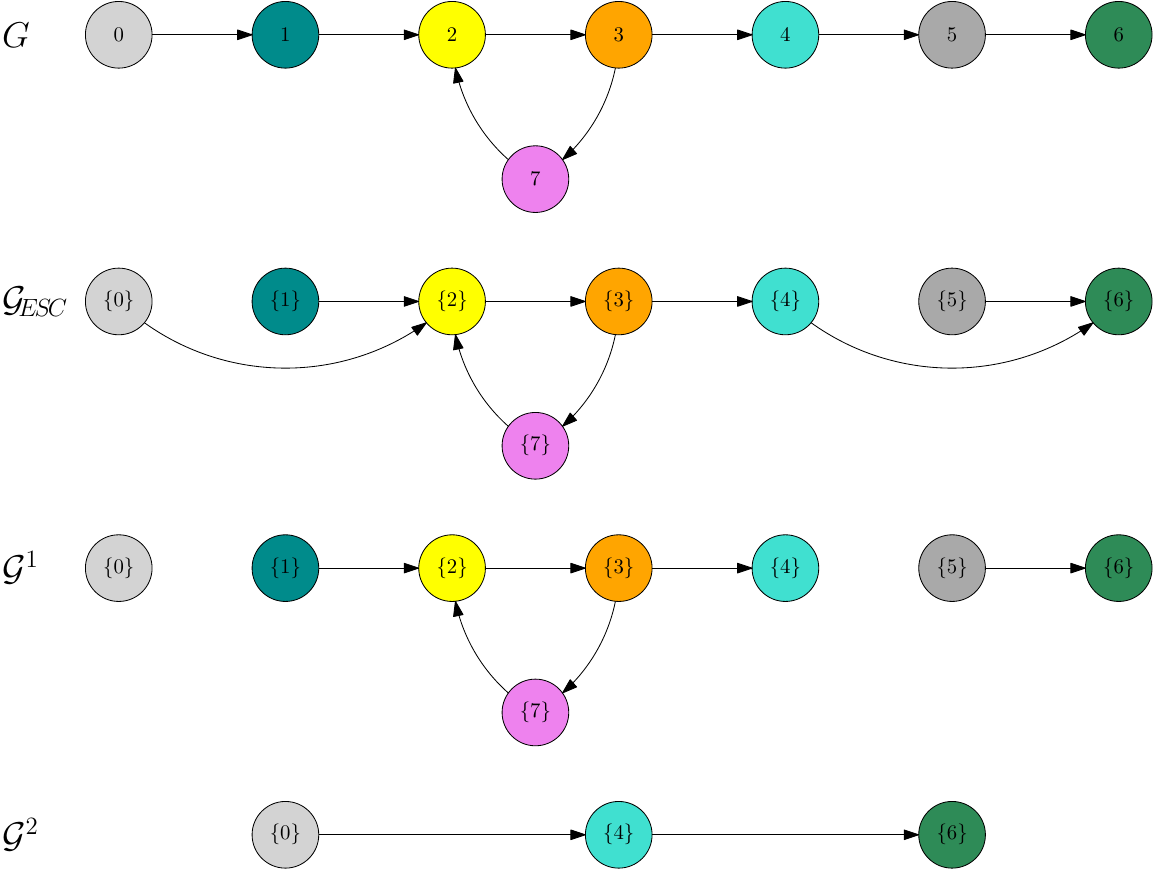}
		\caption{Trait graph $G$, metastability graph $\GG_\ESC$ and $L$-scale graphs $\GG^1$ and $\GG^2$ of Example \ref{ex:6_Assumption}}
		\label{fig:example6}
	\end{figure}
	
	Let us first remark that, because of $\a\in(0,1)$, we are in the regime of the trait substitution sequence (cf.\ \cite{Cha06}). This means that we can neglect the $\ln K$-algorithm. In particular, if $v\ll w$, for some $w\in V_\mut(\{v\})$, then $\v_\ESC (\{v\},w)=\{w\}$.
	
	With this knowledge, let us construct the jump chain step by step. The first two jumps are determined easily, noting that
	\begin{align}
		\v^{(0)}&=\{0\}, & L(\{0\})&=2, & V_\mut(\{0\})&=\{2\},\\
		\v^{(1)}&=\{2\}, & L(\{2\})&=1, & V_\mut(\{2\})&=\{3\},\\
		\v^{(2)}&=\{3\}, & L(\{3\})&=1, & V_\mut(\{3\})&=\{4,7\}.
	\end{align}
	For the third jump, there are two possible triggering mutants. If trait $7$ fixates first, the process jumps to the ESC $\v^{(3)}=\{7\}$ and then returns to $\v^{(4)}=\{2\}$, all on the time scale $1/K\mu_K$.
	If instead trait $4$ fixates earlier, the jump chain continues to $\v^{(3)}=\{4\}$ within a time of order $1/K\mu_K$ and then to $\v^{(4)}=\{6\}$ on the time scale $1/K\mu_K^2$, since $f(5,4)<0$.
	
	Mentioning that $V_\mut(\{1\})=\{2\}$ and $V_\mut(\{5\})=\{6\}$ gives us the metastability graph drawn in Figure \ref{fig:example6}.
	
	To check whether Assumption \ref{Ass:NoCycles} is satisfied, we decompose the set of ESCs $\VV_\ESC$ according to the stability degree,
	\begin{align}
		&\SS^1=\dset{\{1\},\{2\},\{3\},\{5\},\{7\}},&&\SS^2=\dset{\{0\},\{4\}},&&\SS^\infty=\dset{\{6\}}.
	\end{align}
	
	For all $\v\in\SS^1$, one directly sees that an ESC of the same or a higher stability is reached after one jump with probability one. Thus the assumption is true for $L=1$ and we can construct the graph $\GG^1$ as drawn.
	
	In the case of $L=2$, for $\v^{(0)}=\{4\}$, we obtain that with probability one the process jumps to $\v^{(1)}=\{6\}$, which is of higher stability. Finally, we have to check the most involved case of $\v^{(0)}=\{0\}$. From the metastability graph we identify $\v=\{4\}$ as the only reachable ESC of degree $L\geq2$. Due to the branching at $\{3\}$, we have to ensure that the process does not get stuck in a cycle of $(\{2\},\{3\},\{7\},\{2\})$ for infinitely many steps. We can see that
	\begin{align}
		&\Prob{\forall\ k\in\N_{>0}: \v^{(k)}\neq\{4\} \vert \v^{(0)}=\{0\}}=0
	\end{align}
	since the number of cycles that run through before exiting towards $\{4\}$ has a geometric law with success probability $p(\{3\},\{4\})>0$.
	Therefore,  Assumption \ref{Ass:NoCycles} also holds true for $L=2$. This yields the $L$-scale graph $\GG^2$, depicted in Figure \ref{fig:example6}.

	Let us now modify the example by inserting an additional trait $8$, that can be viewed as an intermediate unfit mutation between $3$ and $4$. Moreover, for the sake of clarity, we cut off the traits $5$ and $6$.

	\begin{example}
		\label{ex:7_Assumption}
		Let us consider the directed graph depicted in Figure \ref{fig:example7} and let $\a\in(0,1)$.
		Consider a fitness landscape that satisfies
		\begin{align}
		0\ll 2\ll 3\ll 4,\\
		3\ll 7\ll 2,\\
		1\ll2,\qquad8\ll4,\\
		f(1,0),f(8,3)<0
		\end{align}
		and assume that there are no polymorphic coexistence equilibria.
	\end{example}
	\begin{figure}[h]
		\centering
		\includegraphics[scale=0.6]{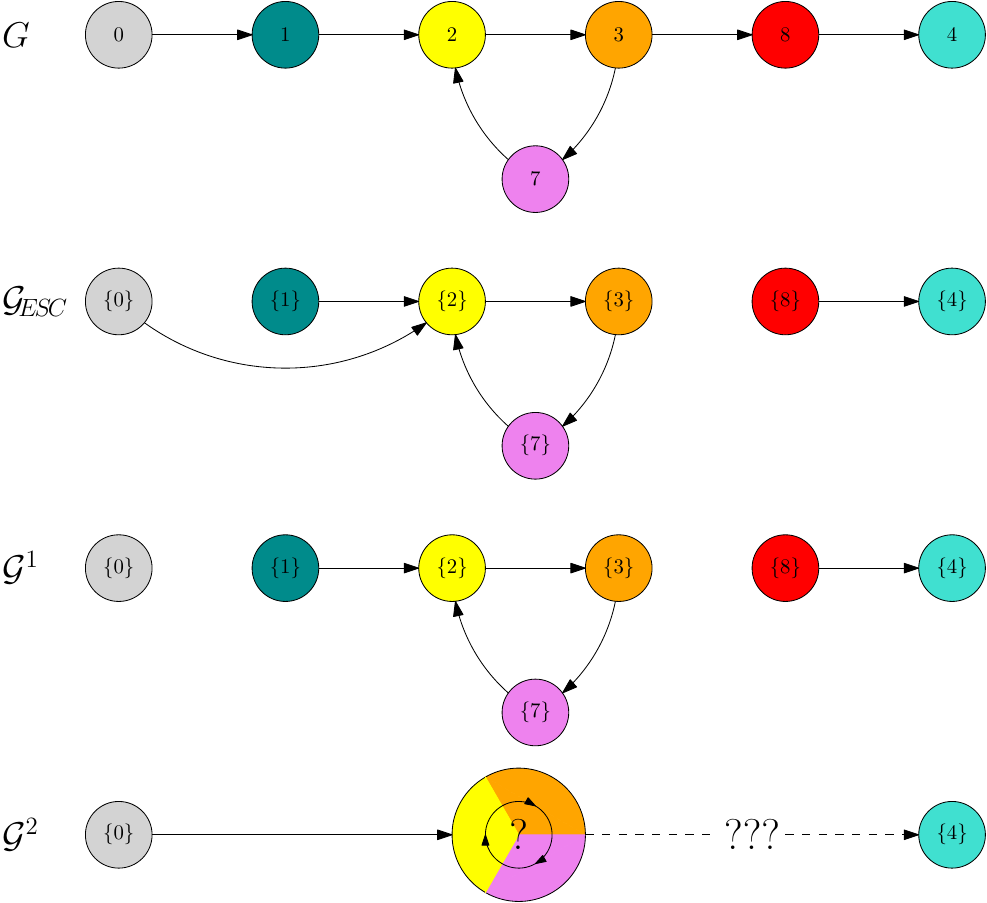}
		\caption{Trait graph $G$, metastability graph $\GG_\ESC$ and $L$-scale graphs $\GG^1$ and $\GG^2$ of Example \ref{ex:7_Assumption}}
		\label{fig:example7}
	\end{figure}

	Since we only changed the trait graph $G$ slightly, also the metastability graph $\GG_\ESC$ stays almost the same. Apart from the omitted traits $5$ and $6$, the main difference is that the valley from the ESC $\{3\}$ to the fit mutant $4$ is now of width 2. Therefore, trait $4$ is no longer one of the nearest fit traits to trait $3$ and the set of possible mutants gets reduced to	$V_\mut(\{3\})=\{7\}$. In particular, there is no longer an edge $(\{3\},\{4\})$ in the metastability graph.
	
	To check whether Assumption \ref{Ass:NoCycles} is satisfied, we again separate the stability classes
	\begin{align}
		\SS^1=\dset{\{1\},\{2\},\{3\},\{7\},\{8\}},&&\SS^2=\dset{\{0\}},&&\SS^\infty\dset{\{4\}}.
	\end{align}
	
	For $L=1$, it is again easy to see from $\GG_\ESC$ that the assumption holds true. To check this for $L=2$, we have to consider how the process can get from the initial ESC $\{0\}$ to some ESC of at least the same stability degree. This is not possible since the only candidate would bee $\{4\}$, which is not reachable since the metastability graph is disconnected. As a conclusion, Assumption \ref{Ass:NoCycles} is not satisfied for $L=2$ and thus we can neither construct the $L$-scale graph $\GG^2$ nor apply Theorem \ref{Thm:Lscale}.
	
	\begin{remark}
		Although the population process gets stuck in a cycle between of the ESCs $\{2\},\{3\},\{7\}$ for infinite time, we expect that it might escape through the fitness valley $3\to8\to4$ eventually, when looking at the time scale $1/K\mu_K^2$. This is due to the fact that, from the microscopic point of view, it is possible to observe mutants of trait $4$ in the phases where $3$ is the resident trait. Indeed, those mutants appear with a much smaller rate than those of trait $7$, but since these phases occure infinitely often, it should only be a question of acceleration to escape from this cycle (c.f.\ Remark \ref{rem:EscapeFromCycle}).
	\end{remark}

\subsubsection{Collapse on higher time scales}
	In the two final examples, we demonstrate how paths in the metastability graph that pass through ESCs of different stability degree collapse to a single edge in the $L$-scale graph when focussing on a particular time scale. To this end we start with an example of a simple linear trait graph with multiple successive fitness valleys of different length. The second example allows for a branching in the metastability graph, which again vanishes in the $L$-scale graph.
	
	\begin{example}
		\label{ex:8_LargeSmallLarge}
		Let us consider the directed graph $G$ depicted in Figure \ref{fig:example8_setting}. Assume equal competition and the individual fitness $r$ plotted in Figure \ref{fig:example8_setting}. Moreover, let $\a\in(1,2)$.
	\end{example}
	\begin{figure}[h]
		\centering
		\includegraphics[scale=0.55]{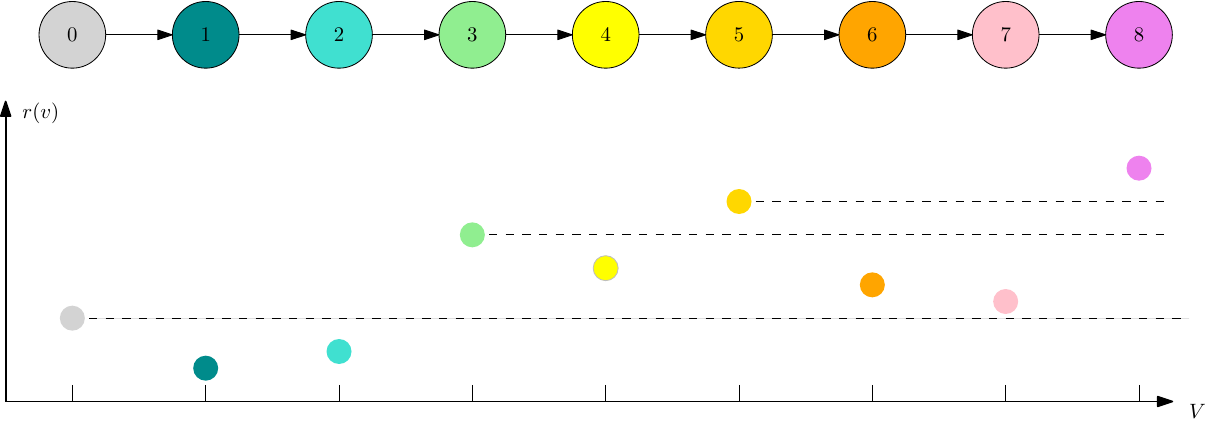}
		\caption{Trait graph $G$ and fitness landscape in terms of individual fitness $r$ of Example \ref{ex:8_LargeSmallLarge}}
		\label{fig:example8_setting}
	\end{figure}

	Due to the linear and directed structure of the trait graph, we can extract the fitness valleys and thus the stability degrees directly from the plotted individual fitness $r$. The jump chain $(\v^{(k)})_{k\geq 0}$ is the deterministic sequence
	\begin{align}
		\v^{(0)}=\{0\}, &&\v^{(1)}=\{3\}, &&\v^{(2)}=\{5\}, &&\v^{(3)}=\{8\}.
	\end{align}
	This is reflected in the metastability graph drawn in Figure \ref{fig:example8_graphs}. Note that $\{6\}$ is also an ESC of stability degree $2$, but it cannot be reached starting from $\{0\}$.
	
	Let us now have a look at the $L$-scale-graphs, i.e.\ at how the limiting jump process evolves when fixing a particular time scale. To this end we focus on the sets of ESCs of equal stability degree, namely
	\begin{align}
		\SS^2=\dset{\{3\},\{6\}}, && \SS^3=\dset{\{0\},\{5\}}, && \SS^\infty=\dset{\{8\}}.
	\end{align}
	Following our construction in \eqref{eq:Def_VL}, the $L$-scale-graph $\GG^2$ consists of the vertices\linebreak $\VV^2=\dset{\{0\},\{3\},\{5\},\{6\},\{8\}}$. Since all but $\{3\}$ and $\{6\}$ are of stability degree higher than $L=2$, the only edges are $\EE^2=\dset{(\{3\},\{5\}),(\{6\},\{8\})}$.
	
	The construction of the edges of $\GG^3$ is far more interesting. In particular, starting in the initial ESC $\v^{(0)}=\{0\}$, we cannot simply take the edge $(\{0\},\{3\})$ from the metastability graph since $L(\{3\})<3$ and thus $\{3\}$ is not stable enough. Instead, we have to consider the whole path $\G=(\{0\},\{3\},\{5\})$ until an ESC of higher stability is reached. This is because the second jump of $\G$ happens much faster (more precisely on the time scale $1/K\mu_K^2$) and hence becomes absorbed in the slower first jump when rescaling the process with $1/K\mu_K^3$. This gives us one edge in $\EE^3$. The second one is given by the jump $(\{5\},\{8\})$. Since $L(\{8\})=\infty$, no further evolution is possible here.
	
	Overall, these considerations lead to the pictures of $\GG^2$ and $\GG^3$ in Figure \ref{fig:example8_graphs}.
	\begin{figure}[h]
		\centering
		\includegraphics[scale=0.6]{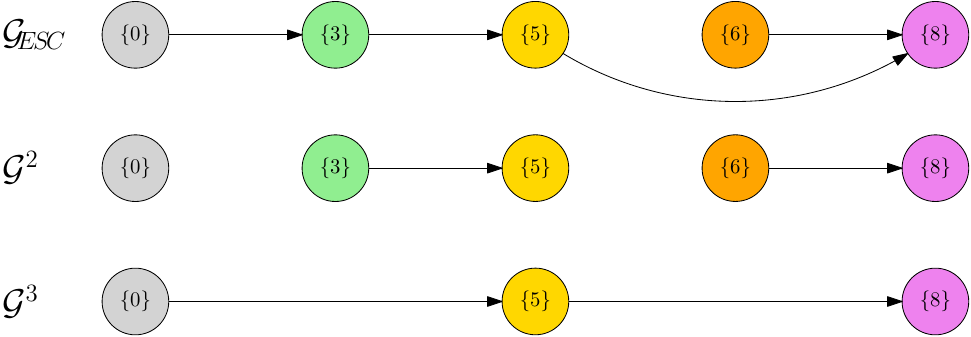}
		\caption{Metastability graph $\GG_\ESC$ and $L$-scale graphs $\GG^2$ and $\GG^3$ of Example \ref{ex:8_LargeSmallLarge}}
		\label{fig:example8_graphs}
	\end{figure}

\begin{example}
	\label{ex:9_MultEscLowStab}
	Let us consider the directed graph $G$ depicted in Figure \ref{fig:example9_setting}. Assume equal competition and the individual fitness $r$ plotted in Figure \ref{fig:example9_setting}. Moreover, let $\a\in(1,2)$.
\end{example}
\begin{figure}[h]
	\centering
	\includegraphics[scale=0.58]{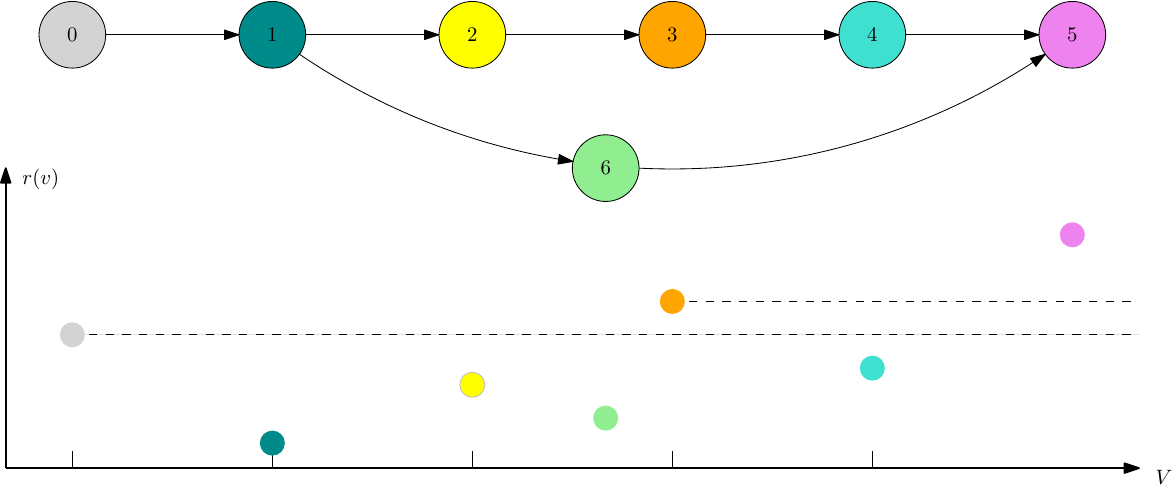}
	\caption{Trait graph $G$ and fitness landscape in terms of individual fitness $r$ of Example \ref{ex:9_MultEscLowStab}}
	\label{fig:example9_setting}
\end{figure}

Starting with the resident population in $\v^{(0)}=\{0\}$, we can directly extract from the plotted individual fitness $r$ that only the traits $3$ and $5$ have positive invasion fitness. Moreover, both can be reached via a path of length $\abs{\g}=3$, namely
\begin{align}
\g^A=(0,1,2,3), && \g^B=(0,1,6,5).
\end{align}
Hence, we associate to this ESC the stability degree $L(\{0\})=3$ and the set of mutant candidates $V_\mut(\{0\})=\dset{3,5}$.

If trait $5$ fixates first, there is no further evolution and we end with $\v_\ESC (\{0\},5)=\{5\}$. In the case where trait $3$ fixates, it can grow and becomes macroscopic. Moreover, since $\a\in(1,2)$, the population of trait $4$ grows by frequent incoming mutants. However, due to its negative invasion fitness with respect to the resident $\{0\}$ and later against the macroscopic population $\{3\}$, it cannot invade. Hence $\v_\ESC (\{0\},3)=\{3\}$ is the corresponding ESC and is of stability degree $L(\{3\})=2$. From thereon, only trait $5$ is a fit reachable mutant, which arises after a waiting time of order $\OO(1/K\mu_K^2)$ and replaces $3$ as an ESC. Those three jumps form the edges of the drawn metastability graph $\GG_\ESC$ in Figure \ref{fig:example7_graphs}.

	The $L$-scale-graph $\GG^2$ is constructed easily whereas the really interesting behaviour occurs when asking for the $\GG^3$. Since $L(\{0\})=3$, the jumps $(\{0\},\{3\})$ and $(\{0\},\{5\})$ happen on the visible time scale. The latter one is clearly also an edge in $\GG^3$, due to the high stability of the final ESC $L(\{5\})=\infty$. However, in case of the former, the ESC that the process jumps to is of smaller stability, i.e.\ $L(\{3\})=2$. Therefore, the next jump $(\{3\},\{5\})$ directly occurs within a time that vanishes under rescaling. The path $\G=(\{0\},\{3\},\{5\})$ in $\GG_\ESC$ thus yields an edge $(\{0\},\{5\})$ for $\GG^3$. This edge already exists and we do not allow for double edges in $\GG^L$. However, the two edges are merged in the sense of adding up the transition rates and probabilities as in \eqref{eq:ProbLTimescale}.
	
	Overall, we see that even a branching in the metastability graph can disappear when multiple paths collapse to the same edge on a particular time scale.

\begin{figure}[h]
	\centering
	\includegraphics[scale=0.6]{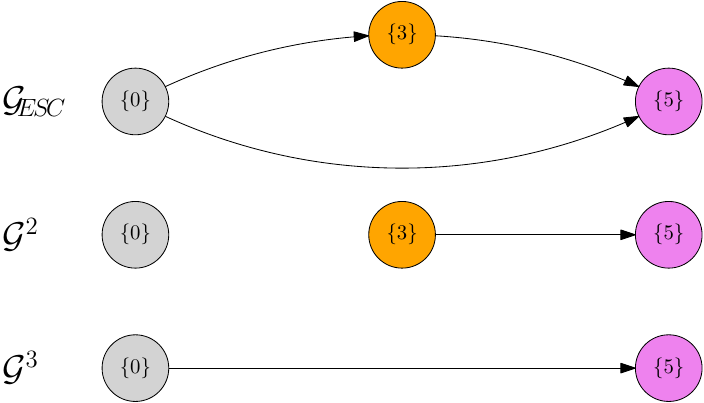}
	\caption{Metastability graph $\GG_\ESC$ and $L$-scale graphs $\GG^2$ and $\GG^3$ of Example \ref{ex:9_MultEscLowStab}}
	\label{fig:example7_graphs}
\end{figure}


\section{Proofs}
\label{sec:4_proofs}

In this chapter, we discuss the proofs of the results on metastable transitions and limiting jump processes that are presented in Sections \ref{sec:2.2_first_result} and \ref{sec:2.3_second_result} These build on the results in \cite{BoCoSm19} on the crossing of a fitness valley on a linear trait space and in \cite{CoKrSm21} on the faster $\ln K$-dynamics on general finite graphs. The main idea is to extend the techniques from \cite{BoCoSm19} to more complex trait spaces by considering sequential mutations along certain paths. Since mutations are very rare outside of the mutation spreading neighbourhood of the resident traits and unfit traits quickly go extinct, mutations along different paths can essentially be regarded as independent. Consequently, the overall rate of transitioning out of an ESC is obtained by summing over the rates of taking specific paths through the surrounding fitness valley.

The remaining chapter is structured as follows: In Section \ref{sec:4.1_proof_equilsize}, we determine the precise equilibrium size of the subpopulations with traits inside the mutation spreading neighbourhood. In Section \ref{sec:4.2_proof_path_rates}, we consider the rates at which mutants of any fitness arise along specific paths and combine these to the overall rate at which single mutants are born. Finally, in Section \ref{sec:4.3_proofs_one_step}, we combine these rates of producing mutants beyond the fitness valley with the probability of fixation and the faster $\ln K$-dynamics of reaching a new ESC to conclude Theorem \ref{Thm:main} and Corollary \ref{Cor:main}.
Section \ref{sec:4.4_proofs_multi_steps} is dedicated to the proof of Corollary \ref{Cor:kProcess} and Theorem \ref{Thm:Lscale}, where we concatenate several jumps across fitness valleys to obtain the multi-scale jump chain and carefully study which transitions are visible on the respective time scales to obtain the dynamics of the limiting Markov jump process.

\subsection[Equilibrium size]{Estimation of the equilibrium size}
\label{sec:4.1_proof_equilsize}

In this section we discuss the equilibrium population sizes of the living traits once an ESC is obtained. The results from \cite{CoKrSm21} only characterize the orders of population sizes $\b_w$ and the actual size $\bar{n}(\v)$ of the resident traits associated to an ESC. In order to calculate the precise transition rates from one ESC to another, we do, however, need a better estimate for the population sizes of the non-resident traits in $V_\alpha(\v)$.

We prove that, if the initial conditions of our process satisfy the assumptions of an asymptotic ESC, all living traits in $V_\alpha(\v)$ get arbitrarily close to their equilibrium size within a finite time. This equilibrium size preserves the orders of population sizes and is of the form
\begin{align}
		N^K_v(t)=a_v K\mu_K^{d(\v,v)} + o\left(K\mu_K^{d(\v,v)}\right) \qquad \forall v\in V_\a(\v),
\end{align}
for some $a_v\in \R_+$, which can be calculated precisely. The populations of living traits stay close to these equilibrium sizes as long as no new trait arises and reaches a size at which it can influence the population sizes of other traits, i.e.\ a size of order $K^{1/\alpha}$. To this extend, we recall the definition of the stopping time
\begin{align}\label{eq:T^K}
T^K_\fix:=\inf\dset{t\geq0:\exists w\in V\backslash V_\alpha(\v): \b^K_w(t)\geq1/\alpha}.
\end{align}


\begin{lemma}[Equilibrium size inside the $\alpha$-radius]
	\label{lem:equilibriumSize}
	Let $\v\subset V$ and $(\b^K(0))_{K\geq0}$ be an asymptotic ESC.
	Then, for all $\eps>0$, there exist constants $\t_\eps<\infty$, $U_\eps>0$ and Markov processes $\left(N_v^{(K,\pm)}(t), t\geq 0\right)_{K\geq 0}$ such that,
	\begin{align}
	    \label{eq:equlibriumConv}
		\lim_{K\to\infty}\Prob{N_v^{(K,-)}\leq N_v^{K}\leq N_v^{(K,+)} \quad\forall t\in (\t_\eps,T^K_\fix\land \mathrm{e}^{U_\eps K}) \quad\forall v\in V_\a(\v)}=1 \nonumber\\
	\end{align}
	and
	\begin{align}
		\abs{\frac{\Exd{N_v^{(K,\pm)}(t)}}{K\mu_K^{d(\v,v)}}-a_v}<\eps \qquad\forall t\geq\t_\eps,
	\end{align}
	where
	\begin{align}
		\label{eq:equilibriumSize}
		a_v :=\sum_{\substack{\g:\v\to v\\ \abs{\g}=d(\v,v)}}
		\bar{n}_{\g_0}(\v)
		\prod_{i=1}^{\abs{\g}} \frac{b(\g_{i-1})m(\g_{i-1},\g_{i})}{\abs{f(\g_{i},\v)}}.
	\end{align}
\end{lemma}

\begin{proof}
	We will prove the claim by induction w.r.t.\ the distance from the resident traits. For the initialisation let us start with $v\in\v$. That is, we count also a single vertex as a path of length zero together with the convention that an empty product has the value one. In this case $(N^K_v,\ v\in\v)$ can be coupled with logistic birth-death processes with immigration, by estimating the incoming and outgoing mutants, which are of order $O(K\mu_K)$ or smaller. Hence we know already from \cite[Lemma A.6(ii)]{CoKrSm21} that the residents stabilize near their Lotka-Volterra-equlilibrium within a time of order $\OO(1)$. To make this more precise, define, for all $\eps>0$, the stopping time when the resident populations enter an $\eps$-neighbourhood of their equilibrium size
	\begin{align}
		\t_\eps^K:=\inf\dset{t\geq 0:\forall v\in\v: \abs{K^{-1}N^K_v(t)-\bar{n}_v(\v)}<\eps C}.
	\end{align}
	Here $C$ is a constant, depending only on the competition rates $c(v,w)$, which compensates the slight shift of the equilibrium due to small fluctuations of non-resident traits.
	Then there exists a constant time $\tilde{\t}_\eps<\infty$, such that $\lim_{K\to\infty}\Prob{\t_\eps^K<\tilde{\t}_\eps}=1$. After this time $\tilde{\t}_\eps$, the environment of competitive pressure stays almost constant, unless the fluctuations of the resident populations become too big or the non-residents reach a macroscopic level. These two events are described by the stopping times
	\begin{align}
		S^K_\eps:=\inf\dset{t\geq \t_\eps^K:\exists v\in\v: \abs{K^{-1}N^K_v(t)-\bar{n}_v(\v)}>2\eps C}
	\end{align}
	and
	\begin{align}
		\sigma^K_\eps:=\inf\dset{t\geq 0: \sum_{w\in V\backslash \v}N_w^K(t)\geq \eps K}.
	\end{align}
	We know from \cite[Propostition A.2]{ChMe11} that, for some constant $U_\eps>0$,
	\begin{align}
		\lim_{K\to \infty}\Prob{S_\eps^K>\mathrm{e}^{U_\eps K}\land\sigma^K_\eps}=1.
	\end{align}
	
	For the other traits in the $\a$-radius $v\in V_\a\backslash\v$ we prove as the induction step that \eqref{eq:equlibriumConv} is satisfied with 
	\begin{equation}
		\label{eq:equil_induction}
		a_v=\sum_{\substack{(w,v)\in E\\ d(\v,w)=d(\v,v)-1}}
		a_w\frac{b(w)m(w,v)}{\abs{f(v,\v)}}
	\end{equation}
	by deriving an upper and a lower bound on the population size through couplings. These bounds then immediately imply the claim.
	
	Following the notation of \cite{FoMe04}, we represent the population processes in terms of Poisson random measures. For this purpose let $(Q^{(b)}_v,Q^{(d)}_v,Q^{(m)}_{w,v};v,w,\in V)$ be independent homogeneous Poisson random measures on the space $(\R^2,\d s\d\theta)$ with constant rate one. Then we can write
	\begin{align}
		\label{eq:interiorPoissonRepr}
		N_v^K(t)
		=N_v^K(0)
		&+\int_0^t\int_{\R_+}\ifct{\theta\leq b(v)(1-\mu_K)N_v^K(s^-)}Q_{v}^{(b)}(\d s,\d\theta) \nonumber\\
		&-\int_0^t\int_{\R_+}\ifct{\theta\leq [d(v)+\sum_{w\in V}c^K(v,w) N_w^K(s^-)]N_v^K(s^-)} Q_{v}^{(d)}(\d s,\d\theta)\nonumber\\
		&+\sum_{(w,v)\in E} \int_0^t\int_{\R_+}\ifct{\theta\leq \mu_K b(w) m(w,v) N_w^K(s^-)}Q_{w,v}^{(m)}(\d s,\d\theta).
	\end{align}
	Note that we use the same Poisson measures to construct the processes for each $K$ here. However, as already pointed out in Section \ref{sec:2.1_model}, this is not necessary and we do not use any particular correlation between the processes for different $K$. We can use a specific joint construction here since we are only considering the convergence of probabilities of certain events, rather than of the processes themselves.
	
	Since we already know from \cite[Theorem 2.2]{CoKrSm21} that in the equilibrium state the non-resident populations $w\in V_\a(\v)$ stay of size $\OO(K\mu_K^{d(\v,w)})$, the main part of the mutations in the last line comes only from traits lying closer to the resident traits. Thus we can adopt the inductive structure of \cite[Lemma 7.1]{BoCoSm19} and approximate the population size of $v$ analogously by coupling it, for $K$ large enough, with two processes
	\begin{align}
		\label{eq:interiorCoupling}
		N^{(K,-)}_{v}(t)\leq N_v^K(t) \leq N^{(K,+)}_{v}(t),
		\quad \forall \tilde{\t}_\eps\leq t\leq \sigma_\eps^K \land T^K_\fix\land S^K_\eps.
	\end{align}

	To be precise, we take care of the admissible fluctuations of the residents by defining
	\begin{equation}
		\bar{n}_v^{(\pm)}(\v):=\bar{n}_v(\v)\pm2\eps C.
	\end{equation}
	Then, for $v\in V \backslash\v$ and $\mu_K<\eps$, we set
	\begin{align}
		&N^{(K,-)}_{v}(t)
		=N^K_v(\tilde{\t}_\eps)
		+\int_{\tilde{\t}_\eps}^t\int_{\R_+}\ifct{\theta\leq b(v)(1-\eps)N^{(K,-)}_{v}(s^-)}Q_{v}^{(b)}(\d s,\d\theta) \nonumber\\
		&\quad-\int_{\tilde{\t}_\eps}^t\int_{\R_+}\ifct{\theta\leq [d(v)+\sum_{w\in\v}c(v,w) \bar{n}_w^{(+)}(\v)+\eps\max_{\tilde{w}\in V\backslash\v}c(v,\tilde{w})]N^{(K,-)}_{v}(s^-)} Q_{v}^{(d)}(\d s,\d\theta)\nonumber\\
		&\quad+\sum_{(w,v)\in E} \int_{\tilde{\t}_\eps}^t\int_{\R_+}\ifct{\theta\leq \mu_K b(w) m(w,v) N^{K}_{w}(s^-)}Q_{w,v}^{(m)}(\d s,\d\theta)
	\end{align}
	and
	\begin{align}
		N^{(K,+)}_{v}(t)
		=&N^K_v(\tilde{\t}_\eps)
		+\int_{\tilde{\t}_\eps}^t\int_{\R_+}\ifct{\theta\leq b(v)N^{(K,+)}_{v}(s^-)}Q_{v}^{(b)}(\d s,\d\theta) \nonumber\\
		&-\int_{\tilde{\t}_\eps}^t\int_{\R_+}\ifct{\theta\leq [d(v)+\sum_{w\in\v}c(v,w) \bar{n}_w^{(-)}(\v)]N^{(K,+)}_{v}(s^-)} Q_{v}^{(d)}(\d s,\d\theta)\nonumber\\
		&+\sum_{(w,v)\in E} \int_{\tilde{\t}_\eps}^t\int_{\R_+}\ifct{\theta\leq \mu_K b(w) m(w,v) N^{K}_{w}(s^-)}Q_{w,v}^{(m)}(\d s,\d\theta),
	\end{align}
	where we use the same Poisson measures as in \eqref{eq:interiorPoissonRepr}. Note that this coupling satisfies \eqref{eq:interiorCoupling} only on the event $\dset{\t_{\eps}^K<\tilde{\t}_{\eps}}$. However, as mentioned above, this event's probability converges to $1$ and we can hence restrict our considerations to this case to obtain the desired convergence.

	On closer inspection, the approximating processes $N^{(K,-)}_{v},N^{(K,+)}_{v}$ are nothing but subcritical birth-death processes with immigration stemming form incoming mutations.
	
	Similar to the proof of \cite[Equation (7.8) et sqq.]{BoCoSm19} we can use the martingale decomposition of $N^{(K,+)}_v$ and $N^{(K,-)}_v$ to derive, for $t>\tilde{\t}_\eps$, the differential equation
	\begin{align}
		\frac{\d}{\d t}&\Exd{N^{(K,*)}_{v}(t)}\nonumber\\
		&=\left(b(v)(1-\ifct{\{*=-\}}\eps)-d(v)-\sum_{w\in\v}c(v,w)\bar{n}_w^{( \bar{*})}(\v)-\ifct{\{*=-\}}\eps\sup_{\tilde{w}\in V\backslash\v}c(v,\tilde{w})\right)\nonumber\\
		&\hspace{1em}\times\Exd{N^{(K,*)}_{v}(t)}
		+\sum_{(w,v)\in E} \mu_K b(w)m(w,v)\Exd{N^{K}_{w}(t)}\\
		&=f^{(*)}(v,\v)\Exd{N^{(K,*)}_{v}(t)}+\sum_{(w,v)\in E} \mu_K b(w)m(w,v)\Exd{N^{K}_{w}(t)},
	\end{align}
	where $\bar{*}=\{+,-\}\backslash*$ denotes the inverse sign.

	Here, we introduce $f^{(*)}(v,\v)$ as a short notation to point out that this is nothing but a perturbation of the invasion fitness.
	Then we can apply our a priori knowledge on the size of the sub-populations, i.e.
	\begin{equation}
		\Exd{N_w^K(t)}=\OO\left(K\mu_K^{\d(\v,w)}\right) \qquad \forall w\in V_\a(\v),
	\end{equation}
	to rewrite the ODE system
	\begin{align}
		\frac{\d}{\d t}\Exd{N^{(K,*)}_{v}(t)}
		=&\ f^{(*)}(v,\v)\Exd{N^{(K,*)}_{v}(t)}
		+\hspace{-1,5em}\sum_{\substack{(w,v)\in E\\d(\v,w)=d(\v,v)-1}} \mu_K b(w)m(w,v)\Exd{N^{K}_{w}(t)}\nonumber\\
		&+\OO\left(K\mu_K^{\d(\v,v)+1}\right)\\
		=&\ f^{(*)}(v,\v)\Exd{N^{(K,*)}_{v}(t)}
		+\hspace{-1,5em}\sum_{\substack{(w,v)\in E\\d(\v,w)=d(\v,v)-1}}  b(w)m(w,v)a_w K\mu_K^{\d(\v,v)}\nonumber\\
		&+o\left(K\mu_K^{\d(\v,v)}\right).
	\end{align}
	Here we use the induction hypothesis to estimate the populations with traits lying closer to the residents in the latter equality.
	
	Rescaling with $K\mu_K^{d(v,\v)}$ and using \eqref{eq:equil_induction}, the equation becomes
	\begin{align}
		\frac{\d}{\d t}\Exd{\frac{N^{(K,*)}_{v}(t)}{K\mu_K^{\d(\v,v)}}}
		&=f^{(*)}(v,\v)\Exd{\frac{N^{(K,*)}_{v}(t)}{K\mu_K^{\d(\v,v)}}}
		+a_v\abs{f(v,\v)} +o(1).
	\end{align}
	
	By variation of constants the solution is given by
	\begin{align}
		\label{eq:VarOfConst}
		\Exd{\frac{N^{(K,*)}_{v}(t)}{K\mu_K^{\d(\v,v)}}}
		=&\ \mathrm{e}^{f^{(*)}(v,\v)(t-\tilde{\t}_\eps)}
		\left(\Exd{\frac{N^{K}_{v}(\tilde{\t}_\eps)}{K\mu_K^{\d(\v,v)}}}-\frac{\abs{f(v,\v)}}{\abs{f^{(*)}(v,\v)}}a_v+o(1)\right)\nonumber\\
		&+\frac{\abs{f(v,\v)}}{\abs{f^{(*)}(v,\v)}}a_v+o(1)
		\nonumber\\
	\end{align}
	
	Note that the term in brackets can be bounded uniformly in $K$ and $\eps$, for $\eps$ small enough. Moreover the ratio of (perturbed) fitness can be expressed as $(1\pm\eps\tilde{c}_\eps)$. So \eqref{eq:VarOfConst} becomes
	\begin{align}
		\Exd{\frac{N^{(K,*)}_{v}(t)}{K\mu_K^{\d(\v,v)}}}=\mathrm{e}^{f^{(*)}(v,\v)(t-\tilde{\t}_\eps)}
		\OO(1)+(1\pm\eps\tilde{c}_\eps)a_v+o(1)
	\end{align}
	
	Finally taking into account that the fitness $f^{(*)}(v,\v)<0$ is negative for $v\in V_\a(\v)$ the first term vanishes for increasing time. Hence we see that for all $\tilde{\eps}>0$ there are $\eps>0$ and  $\t_{\tilde{\eps}}\in(\tilde{\t}_\eps,\infty)$ and $K_0\in\N$ such that, for all $t>\t_{\tilde{\eps}}$ and $K>K_0$
	\begin{align}
		\abs{\Exd{\frac{N^{(K,*)}_{v}(t)}{K\mu_K^{\d(\v,v)}}}-a_v}<\tilde{\eps}.
	\end{align}
	
	A posteriori, we then see that
	\begin{align}
		\lim_{K\to\infty}\Prob{\sigma_\eps^K<T^K_\fix\land\mathrm{e}^{U_\eps K}}=0,
	\end{align}
	which allows us to drop the stopping time $\sigma_\eps^K$ in the claim.
\end{proof}

\subsection[Path rates]{Pathwise evolution rates}
\label{sec:4.2_proof_path_rates}
From the precise description of the population sizes inside the mutation spreading neighbourhood we can now deduce the rate of occurrence of mutants that lay outside.

To observe a new mutant, whose trait is far away from the resident population, a whole sequence of mutation steps is needed. Traits outside the $\a$-neighbourhood $V_\a(\v)$ cannot avoid extinction only due to incoming mutants. Therefore, if such a trait has negative invasion fitness, mutants only give rise to small excursions approximated by subcritical branching processes. During each of these excursions there is a small probability that a new mutant is produced before extinction.

To overcome the problem of tracking possible back mutations, we not only observe the sizes of the different mutant populations. Instead, we distinguish mutants by the mutational path along which they arose and keep track of the genealogy. We set
\begin{equation}
	N_v^K(t)=\sum_{\g:\partial V_\alpha\to v} N_{v,\g}^K(t) \qquad\forall v\in V\backslash V_\a,
\end{equation}
where the pathwise mutations can by represented by
\begin{align}
	\label{eq:pathwisePoissonRepr}
	N_{v,\g}^K(t)
	&=\int_0^t\int_{\R_+}\ifct{\theta\leq b(v)(1-\mu_K)N_{v,\g}^K(s^-)}Q_{v,\g}^{(b)}(\d s,\d\theta) \nonumber\\
	&+\int_0^t\int_{\R_+}\ifct{\theta\leq \mu_K b(\tilde{v}) m(\tilde{v},v) N_{\tilde{v},\g\backslash v}^K(s^-)}Q_{v,\g}^{(m)}(\d s,\d\theta) \nonumber\\
	&-\int_0^t\int_{\R_+}\ifct{\theta\leq [d(v)+\sum_{w\in V}c^K(v,w) N_w^K(s^-)]N_{v,\g}^K(s^-)} Q_{v,\g}^{(d)}(\d s,\d\theta).
\end{align}
Here $\tilde{v}$ stands for the next-to-last vertex in $\g$, which is the progenitor of $v$ in $\g$, and for $\tilde{v}\in\partial V_\alpha$ we set
	\begin{align}
		N_{\tilde{v},\emptyset}(t):=N_{\tilde{v}}(t).
\end{align}
As before, $(Q^{(b)}_{v,\g},Q^{(d)}_{v,\g},Q^{(m)}_{v,\g};v\in V,\g:\partial V_\a\to v)$ are independent homogeneous Poisson random measures with constant rate one.

\begin{remark}\label{Rem:extinctoutside}
It suffices to only sum over the paths starting in $\partial V_\a$ in the decomposition. By the definition of $T^K_\ESC$ all populations outside of $V_\a$ are extinct at that time. The probability that a mutant of trait $v\in V\backslash V_\a$ arises before the finite time $\tau_\eps$ in Lemma \ref{lem:equilibriumSize}, when the populations in $V_\a$ reach their equilibrium, goes to zero. After this time we have good bounds on the population sizes of all traits in $V_\a$ and it is therefore sufficient to trace back the genealogy of new mutants to the last trait in $V_\a$, i.e.\ a trait in $\partial V_\a$.
\end{remark}

    With this representation at hand, we are now able to define the cumulated number of mutant individuals of trait $v$ that arose as mutants of the progenitor $\tilde{v}$, along the path $\g$
\begin{align}
	\label{eq:mutationCounter}
	M_{v,\g}^K(t)=\int_0^t\int_{\R_+}\ifct{\theta\leq \mu_K b(\tilde{v}) m(\tilde{v},v) N_{\tilde{v},\g\backslash v}^K(s^-)}Q_{v,\g}^{(m)}(\d s,\d\theta),
\end{align}
as well as the respective occurrence times of these mutants

\begin{align}
	\label{eq:pathwiseWaitingtime}
	T^{(i,K)}_{v,\g}:=\inf\left\{t\geq 0: M_{v,\g}^K(t)\geq i \right\},
\end{align}
where we set $T^{(0,K)}_{v,\g}:=0$.

Our aim is to show that new mutants outside of $V_\a$ appear at the end of a mutation path approximately as a Poisson point process with rate scaling with length of the path.

\begin{lemma}
	\label{lem:pathRates}
	Suppose $\v$ and $(\beta^K(0))_{K\geq0}$ are an asymptotic ESC and let $T^K_\fix$ be defined as in \eqref{eq:T^K}. Let $v\in V\backslash V_\a$ and $\g:\partial V_\a\to v$ be such that $\abs{\g}\geq L-\lalpha$ and $f(\g_i,\v)<0$, for all $i=0,\ldots,\abs{\g}-1$. Then there exist $0<c,C<\infty$ such that, for each $\eps>0$, there exist two Poisson point processes $M^{(K,\pm)}_{v,\g}$ with rates $\tilde{R}^{(\pm)}_{v,\g}K\mu_K^{\lalpha+\abs{ \g}}$ such that
	\begin{align}
		\liminf_{K\to\infty} \Prob{M^{(K,-)}_{v,\g}(t)<M_{v,\g}^K(t)<M^{(K,+)}_{v,\g}(t),\ \forall t< T^K_\fix}\geq 1-c\eps,
	\end{align}
	where the rate parameters are defined as
	\begin{align}
		\label{eq:pathwiseRates}
		\tilde{R}_ {v,\g}:=a_{\g_0} b(\g_0)m(\g_0,\g_1) \prod_{j=1}^{\abs{\g}-1} \lambda(\rho(\g_j,\v))m(\g_j,\g_{j+1}),
		&&\tilde{R}^{(\pm)}_{v,\g}=(1\pm C\eps)\tilde{R}_ {v,\g}.
	\end{align}
\end{lemma}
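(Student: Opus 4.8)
The plan is to build on the genealogical decomposition \eqref{eq:pathwisePoissonRepr}, on Lemma~\ref{lem:equilibriumSize}, and on the combinatorial excursion count of Appendix~\ref{appendix}, proving the statement by induction on the distance travelled along $\g$. Writing $\g^{(j)}:=(\g_0,\dots,\g_j)$ for the $j$-th initial segment, I would establish simultaneously, up to error probability $O(\eps)$ and uniformly for $t<T^K$, two families of sandwich estimates. First, a \emph{counter estimate} (A$_j$), valid for $1\le j\le\abs{\g}$: the mutation counter $M^K_{\g_j,\g^{(j)}}$ from \eqref{eq:mutationCounter} is trapped between two Poisson processes of rates $(1\pm c\eps)\,\tilde R_{\g_j,\g^{(j)}}\,K\mu_K^{\lalpha+j}$, where $\tilde R_{\g_j,\g^{(j)}}$ is the restriction of \eqref{eq:pathwiseRates} to $\g^{(j)}$. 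Second, an \emph{occupation estimate} (B$_j$), valid for $0\le j\le\abs{\g}-1$:
\[
\int_0^t N^K_{\g_j,\g^{(j)}}(s)\,\d s=\bar a_j\,K\mu_K^{\lalpha+j}\,t\,(1+O(\eps)),\qquad \bar a_0:=a_{\g_0},\quad \bar a_j:=\tfrac{\tilde R_{\g_j,\g^{(j)}}\lambda(\rho(\g_j,\v))}{b(\g_j)}\ (j\ge1).
\]
The assertion of the lemma is exactly (A$_{\abs{\g}}$), so it suffices to alternate these two steps up the path.

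For the base case I would invoke Lemma~\ref{lem:equilibriumSize}: since $\g_0\in\partial V_\a$ has $d(\v,\g_0)=\lalpha$, one has $N^K_{\g_0,\emptyset}(s)=N^K_{\g_0}(s)=a_{\g_0}K\mu_K^{\lalpha}(1+o(1))$ on $(\t_\eps,T^K)$, which yields (B$_0$) after noting that the transient $(0,\t_\eps)$ contributes only $O(1)$, negligible against the long window. The passage (B$_{j-1}$)$\Rightarrow$(A$_j$) is then a subordination argument: by \eqref{eq:mutationCounter}, $M^K_{\g_j,\g^{(j)}}$ is a rate-one Poisson process time-changed by $A_j(t)=\int_0^t\mu_K b(\g_{j-1})m(\g_{j-1},\g_j)N^K_{\g_{j-1},\g^{(j-1)}}(s)\,\d s$, and (B$_{j-1}$) identifies $A_j(t)$ with $\tilde R_{\g_j,\g^{(j)}}K\mu_K^{\lalpha+j}t\,(1+O(\eps))$; comparing the compensator $A_j$ to the linear compensators $(1\pm c\eps)\tilde R_{\g_j,\g^{(j)}}K\mu_K^{\lalpha+j}t$ produces, off an event of probability $O(\eps)$, the two coupled Poisson processes $M^{(K,\pm)}_{\g_j,\g^{(j)}}$ demanded in (A$_j$).

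The heart of the proof is the opposite passage (A$_j$)$\Rightarrow$(B$_j$). Since $f(\g_j,\v)<0$ forces $\rho(\g_j,\v)<1/2$, the trait $\g_j$ is subcritical, and between the immigration times $S_i$ (the points of $M^K_{\g_j,\g^{(j)}}$) the process evolves as a birth--death process; by the branching property it decomposes as $N^K_{\g_j,\g^{(j)}}(s)=\sum_{i:S_i\le s}E_i(s-S_i)$ with i.i.d.\ one-individual excursions $E_i$. Bounding the competitive death rate of $\g_j$ between $d(\g_j)+\sum_{w\in\v}c(\g_j,w)\bar n^{(\pm)}_w(\v)$ exactly as in the coupling \eqref{eq:interiorCoupling} (the $o(K)$ non-resident populations perturb it only by $O(\eps)$), I couple each $E_i$ between clean subcritical birth--death excursions with up-probabilities $\rho^{(\pm)}(\g_j,\v)=\rho(\g_j,\v)(1+O(\eps))$. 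The appendix excursion count then identifies the expected number of births of such an excursion as $\lambda(\rho^{(\pm)}(\g_j,\v))$; since births occur at rate $b(\g_j)E_i(\cdot)$, this gives $\Exd{\int_0^\infty E_i(u)\,\d u}=\lambda(\rho(\g_j,\v))/b(\g_j)\,(1+O(\eps))$ by continuity of $\lambda$. Finally, over the window $t\le T^K$ the number of immigrants $M^K_{\g_j,\g^{(j)}}(t)\approx\tilde R_{\g_j,\g^{(j)}}K\mu_K^{\lalpha+j}t$ diverges for every intermediate vertex of a decisive (shortest) path, so a law of large numbers for the compound sum $\sum_{i:S_i\le t}I_i$, made uniform in $t$ by a Doob maximal inequality (the multi-step analogue of the variance bound of \cite[Lemma~7.2]{BoCoSm19}), yields (B$_j$); the $O(1)$ boundary contribution of excursions unfinished at time $t$ is negligible since the integral itself is of diverging order.

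The main obstacle is precisely this occupation estimate: obtaining a \emph{relative} $(1\pm\eps)$ control of $\int_0^t N^K_{\g_j,\g^{(j)}}$ that is uniform over the very long horizon $t<T^K$, despite the fact that each individual excursion contributes an $O(1)$ jump to the integral while the target grows only linearly --- so the uniform concentration must be argued in the bulk $t\asymp T^K$, where the integral is already of diverging order, and the coupling of genuine excursions to parameter-perturbed subcritical birth--death processes must be tight enough that $\lambda(\rho^{(\pm)})$ differs from $\lambda(\rho(\g_j,\v))$ by only $O(\eps)$. These are the multi-step generalisations of \cite[Lemmas~7.1 and 7.2]{BoCoSm19}, the new inputs being Lemma~\ref{lem:equilibriumSize} supplying the boundary population $a_{\g_0}K\mu_K^{\lalpha}$ and the appendix count supplying the factor $\lambda(\rho(\g_j,\v))$ at each intermediate vertex. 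Because the trait graph is finite there are only finitely many levels and vertices, so the per-level probability losses $O(\eps)$ and multiplicative distortions $(1\pm c\eps)$ accumulate into a single constant $c$ and a single $(1\pm\eps)$ after rescaling $\eps$, which is exactly the form $\tilde R^{(\pm)}_{v,\g}=(1\pm\eps)\tilde R_{v,\g}$ with probability at least $1-c\eps$ asserted in the lemma.
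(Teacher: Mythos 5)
Your proposal is correct in outline and shares the paper's skeleton -- induction along $\g$, Lemma \ref{lem:equilibriumSize} supplying the boundary population $a_{\g_0}K\mu_K^{\lalpha}$, couplings of intermediate traits to parameter-perturbed subcritical birth--death processes, and Lemma \ref{lem:offspringOfAnExcursion} supplying $\lambda(\rho)$ -- but the engine of your induction step differs genuinely from the paper's. The paper argues per excursion: each immigrant at $\g_j$ independently produces exactly one $\g_{j+1}$-mutant with probability $\mu_K\lambda(\rho(\g_j,\v))m(\g_j,\g_{j+1})(1+\OO(\eps))$ and at least two with probability $\OO(\mu_K^2)$, so $M^K_{\g_{j+1},\g^{(j+1)}}$ is a \emph{thinning} of the parent counter, which stays Poisson by induction; independence of excursions rests on their asymptotic non-overlap (Remark \ref{rem:Excursion}). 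You instead interpose the occupation estimate (B$_j$) and treat the next counter as a Poisson process subordinated to the compensator $A_j(t)=\mu_K b(\g_j)m(\g_j,\g_{j+1})\int_0^t N^K_{\g_j,\g^{(j)}}(s)\,\d s$; the arithmetic agrees, since $\Exd{\int_0^\infty E_i(u)\,\d u}=\lambda(\rho(\g_j,\v))/b(\g_j)$ reproduces the same $\mu_K m\lambda$ per excursion. Your route has the advantage of not needing the non-overlap of excursions at all (the occupation integral is additive regardless), and it makes the error propagation more quantitative via the martingale/LLN control. Its cost is precisely the point you flag but do not fully resolve: the two-sided relative sandwich $A_j(t)\in(1\pm c\eps)\Lambda t$ cannot hold uniformly down to $t=0$, since until roughly $\eps^{-2}$ immigrants have arrived each $\OO(1)$ excursion contributes a compensator jump of order $\mu_K$, comparable to $\Lambda t$ itself, so the relative fluctuation exceeds $\eps$ there. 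The clean repair, which you should state explicitly, is that this initial window is harmless for the counting-process sandwich: the probability that $M^K_{\g_{j+1},\g^{(j+1)}}$ or either bounding Poisson process has a point among the first $\OO(\eps^{-2})$ excursions is $\OO(\eps^{-2}\mu_K)\to0$, so the coupling only needs the compensator sandwich in the bulk, where your LLN applies. The paper's thinning formulation sidesteps this boundary issue altogether, since no uniform-in-$t$ compensator control is ever invoked; with that one patch, your argument is a valid and slightly more robust alternative to the proof in Section \ref{sec:4.2_proof_path_rates}.
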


For the definitions of $\lambda(\rho)$ and $\rho(v,\v)$ we refer to \eqref{def:lambda} and \eqref{def:rho} respectively, while $a_{\g_0}$ is the equilibrium size defined in \eqref{eq:equilibriumSize}.

\begin{proof}
	Note that, throughout the whole proof, we assume that $\tau_\eps<t<T^K_\fix\land e^{U_\eps K}$, where $\tau_\eps$ and $U_\eps$ are defined in Lemma \ref{lem:equilibriumSize}. This can then be extended to all $0\leq t<T^K_\fix$ in the limit of $K\to\infty$ since $T^K_\fix<e^{U_\eps K}$ with probability converging to $1$ and, since $\mu_K\to0$, there is almost surely no mutation event during the finite time interval $[0,\tau_\eps]$.
	
	Let $v\in V\setminus V_\a$ and $\g:\partial V_\a\to v$ be given as in the Lemma. To better distinguish from the full path $\gamma$, we refer to the vertices of the path via $\g=(v_0,v_1,\ldots,v_{\abs{\g}})$. The idea of this proof is to considered the path isolated from the remaining graph and adapt the tools from \cite[Ch. 7.3.]{BoCoSm19} to the present situation. We refrain from adding much more notation to our already complicated situation. We try to handle the far more general structure of our trait graph by translating the notation of the central objects between the articles instead.
	
	The first observation is that, for every $t<T^K_\fix$, we can bound the mutant counting process of trait $v_1$ by
	\begin{align}
		M_{v_1,\g}^{(K,-)}(t)
		\leq M_{v_1,\g}^K(t)
		\leq M_{v_1,\g}^{(K,+)}(t)
		\quad\text{a.s.},
	\end{align}
	with the bounding processes being defined as
	\begin{align}
			M_{v_1,\g}^{(K,\pm)}(t)=\int_0^t\int_{\R_+}\ifct{\theta\leq \mu_K b(v_0) m(v_0,v_1) N_{v_0}^{(K,\pm)}(s^-)}Q_{v_0,\g}^{(m)}(\d s,\d\theta).
	\end{align}
	Note that the estimate corresponds to equation (7.42) in \cite{BoCoSm19}, while the definition is the adapted version of (7.17) therein. In order make use of Lemma \ref{lem:equilibriumSize}, we continue temporarily with the simplified processes
	\begin{align}
		\bar{M}_{v_1,\g}^{(K,\pm)}(t)=\int_0^t\int_{\R_+}\ifct{\theta\leq \mu_K b(v_0) m(v_0,v_1) \Exd{N_{v_0}^{(K,\pm)}(s^-)}}Q_{v_0,\g}^{(m)}(\d s,\d\theta)
	\end{align}
	and
	\begin{align}
		\bar{T}^{(i,K,\pm)}_{v_1,\g}:=\inf\left\{t\geq 0: \bar{M}_{v_1,\g}^{(K,\pm)}(t)\geq i \right\}.
	\end{align}

	In fact, this turns out to be sufficient for our results since a standard application of Doob's martingale inequality shows that, with probability converging to 1, the difference of the processes $M_{v_1,\g}^{(K,\pm)}$ and $\bar{M}_{v_1,\g}^{(K,\pm)}$ during the relevant time interval stays of sufficiently small order. To be precise there exist sequences of numbers $N_1(K)$ and $N_2(K)$, with
	\begin{align}
		N_1(K)\gg (K\mu_K^{L})^{-1}
		\quad\text{and}\quad
		N_2(K)\ll (\mu_K^{L-1-\lalpha})^{-1}
	\end{align}
	such that
	\begin{align}
		\lim_{K\to\infty} \Prob{\sup_{s\leq N_1(K)}\abs{M_{v_1,\g}^{(K,\pm)}(s)-\bar{M}_{v_1,\g}^{(K,\pm)}(s)}>N_2(K)}=0.
	\end{align}
	For details, see \cite[p.3583]{BoCoSm19}. At each time $\bar{T}^{(i,K,\pm)}_{v_1,\g}$ an individual of trait $v_1$ is born. In order to track its ancestors until potentially a trait $v_{\abs{\g}}$ individual is born, in a similar way as done in the previous section, we couple the $k$-mutant population, for $\lalpha+1\leq k\leq \abs{\g}-1$, to birth-death processes with individual birth and death rates
	\begin{align}
		b^{(*)}(v_k)&=b(v_k)(1-\ifct{\{*=-\}}\eps)\\ d^{(*)}(v_k)&=d(v_k)+\sum_{w\in\v}c(v_k,w)\bar{n}_w^{(\bar{*})}(\v)+\ifct{\{*=-\}}\eps\sup_{\tilde{w}\in V\backslash\v}c(v_k,\tilde{w}).
	\end{align} Note that in contrast to Section \ref{sec:4.1_proof_equilsize}, these processes are subcritical and hence go extinct in finite time. However, there is a small probability during such an excursion of the $k$-mutant population that an individual of trait $(k+1)$ is born.
	Analogously to \cite[pp. 3581-3582]{BoCoSm19}, we can use Lemma \ref{lem:offspringOfAnExcursion} (see Appendix \ref{app:A1}) to derive
	\begin{multline}
		\Prob{\text{An excursion of trait $v_k$ produces exactly 1 mutant of type $v_{k+1}$}}\\
		=\mu_K\lambda(\rho(v_k,\v))m(v_k,v_{k+1})(1+\OO(\eps)),
	\end{multline}
		while on the other hand
	\begin{align}
		\Prob{\text{An excursion of trait $v_k$ produces at least 2 mutants of $v_{k+1}$}}=\OO(\mu_K^2).
	\end{align}
	Hence, the probability that the $i$-th mutant of trait $v_1$ (i.e.\ the one triggering $\bar{T}^{(i,K,\pm)}_{v_1,\g}$) produces a $v_{\abs{\g}}$-mutant is, for large $K$,
	\begin{align}
	\label{eq:ThinningProb}
		\mu_K^{\abs{\g}-1}
		\left(\prod_{k=\lalpha+1}^{\abs{\g}-1} \lambda(\rho(v_k,\v))m(v_k,v_{k+1}) \right)
		(1+\OO(\eps)).
	\end{align}

	Since Lemma \ref{lem:equilibriumSize} implies that $\bar{M}_{v_1,\g}^{(K,\pm)}$ can be treated as a Poison process with intensity
	\begin{align}
		K\mu_K^{d(\v, v_0)+1} a_v b(v_0) m(v_0,v_1),
	\end{align}
	we get appearance of $v_{\abs{\g}}$-mutants also as Poison process with thinned intensity
	\begin{align}
	&K\mu_K^{d(\v, v_0)+\abs{\g}}
	a_v b(v_0) m(v_0,v_1)
	\left(\prod_{k=\lalpha+1}^{\abs{\g}-1} \lambda(\rho(v_k,\v))m(v_k,v_{k+1}) \right)
	(1+\OO(\eps))\\
	&=\tilde{R}^{(\pm)}_{v,\g}K\mu_K^{\lalpha+\abs{ \g}}.
	\end{align}

	Eventually, the difference between $M_{v_1,\g}^{(K,\pm)}$ and $\bar{M}_{v_1,\g}^{(K,\pm)}$ is of smaller order than $(\mu_K^{L-1-\lalpha})^{-1}$ and multiplying with the thinning probability \eqref{eq:ThinningProb}, which is of order $\mu_K^{\abs{\g}-1-\lalpha}$, this only changes the appearance rate for the $v_{\abs{\g}}$-mutants by a  vanishing order.
\end{proof}

\begin{remark}
	\label{rem:Excursion}
	Note that in general there could be an overlap of two excursions of $N_{v_k,\g}^K$, associated to different incoming mutants. Nevertheless in the limit of $K\to\infty$ this does not happen since the time interval between the incoming mutants diverges, while the durations of the excursions stay of order one, i.e. $T_{v_k,\g}^{(i+1,K)}-T_{v_k,\g}^{(i,K)}\gg1$.
\end{remark}

As a direct corollary we can deduce the law of the appearance times of new mutants with trait $v\in V\backslash V_\a$.

\begin{corollary}
	\label{cor:mutantArrivals}
	Suppose $\v$ and $(\beta^K(0))_{K\geq0}$ are an asymptotic ESC. Let $v\in V\backslash V_\a$ be a trait such that all paths $\g:\partial V_\a\to v$ of shortest length $\abs{\g}=d(V_\a,v)$ do only visit traits with negative invasion fitness, excluding the last trait $v$, i.e.\ $f(\g_{i},\v)<0\ \linebreak \forall i=0,\ldots,\abs{\g}-1$. Denote  by $T^{(i)}_v$ the appearance time of the $i$-th mutant of trait $v$ descended from an nearest neighbour trait. Then there exists a $0<c<\infty$ such that, for each $\eps>0$, there exist sequences of iid. exponential random variables $E^{(i,\pm)}_v$, $i\geq 1$ with rates $\tilde{R}^{(\pm)}_v=(1\pm C\eps)\tilde{R}_v$, where
	\begin{align}
		\label{eq:traitRates}
		\tilde{R}_v:=\sum_{\substack{\g:\partial V_\a\to v\\ \abs{\g}=d(V_\a,v)}}a_{\g_0} b(\g_0)m(\g_0,\g_1) \prod_{j=1}^{\abs{\g}-1} \lambda(\rho(\g_j,\v))m(\g_j,\g_{j+1})
	\end{align}
	Such that
	\begin{align}
		\liminf_{K\to\infty} \Prob{E^{(i,-)}\leq K\mu_K^{d(\v,v)}\left(T^{(i)}_v-T^{(i-1)}_v\right)\leq E^{(i,+)} \left\vert T^{(i)}_v<T^K_\fix\right.}\geq 1-c\eps
	\end{align}
\end{corollary}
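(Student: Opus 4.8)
The plan is to read off the corollary from the per-path estimate of Lemma \ref{lem:pathRates} by superposing over the shortest paths. By the pathwise decomposition, the number of $v$-mutants descended from a nearest-neighbour trait is $M_v^K(t)=\sum_{\g:\partial V_\a\to v}M_{v,\g}^K(t)$, so $T^{(i)}_v$ is exactly the $i$-th jump time of $M_v^K$. The first step is to discard all non-shortest paths. Recalling that a shortest path from $\v$ to $v$ crosses $\partial V_\a$, so that $\lalpha+d(V_\a,v)=d(\v,v)$, Lemma \ref{lem:pathRates} bounds $M_{v,\g}^K$ for any admissible path of length $\abs{\g}>d(V_\a,v)$ by Poisson processes of rate $\tilde R^{(\pm)}_{v,\g}K\mu_K^{\lalpha+\abs{\g}}=o(K\mu_K^{d(\v,v)})$. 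On the time scale $1/(K\mu_K^{d(\v,v)})$ on which we rescale, the expected number of such mutants produced before $T^{(i)}_v\wedge T^K$ tends to zero, and a first-moment estimate shows that with probability tending to one none appears. Paths visiting a fit trait are covered as well, since by hypothesis every shortest path to $v$ meets only traits of negative invasion fitness and any longer path is already negligible. Thus, up to an error absorbed into the final $1-c\ve$, the process $M_v^K$ coincides on $[0,T^K)$ with $\sum_{\g:\,\abs{\g}=d(V_\a,v)}M_{v,\g}^K$.

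Next I would superpose the shortest-path estimates. Applying Lemma \ref{lem:pathRates} to each of the finitely many shortest paths and taking a union bound over them, with probability at least $1-c\ve$ every $M_{v,\g}^K$ is sandwiched between Poisson processes $M^{(K,\pm)}_{v,\g}$ of rate $\tilde R^{(\pm)}_{v,\g}K\mu_K^{d(\v,v)}$ on $[0,T^K)$. Summing the rates over $\g$ and using \eqref{eq:pathwiseRates} yields exactly $\sum_\g\tilde R_{v,\g}=\tilde R_v$ from \eqref{eq:traitRates}. The superposition of the bounding processes is then a Poisson process of rate $\tilde R^{(\pm)}_vK\mu_K^{d(\v,v)}$ that sandwiches $M_v^K$ from above and below on $[0,T^K)$; equivalently, the predictable compensator of $M_v^K$ is squeezed between $(1-\ve)\tilde R_vK\mu_K^{d(\v,v)}t$ and $(1+\ve)\tilde R_vK\mu_K^{d(\v,v)}t$ on this interval.

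To pass from the counting process to its inter-arrival law I would use the random time-change representation of $M_v^K$ as a unit-rate Poisson process evaluated at its compensator; the bounds on the compensator translate into bounds on consecutive jump times. After rescaling time by $K\mu_K^{d(\v,v)}$, the $i$-th rescaled inter-arrival time $K\mu_K^{d(\v,v)}(T^{(i)}_v-T^{(i-1)}_v)$ is then sandwiched, on the event $\{T^{(i)}_v<T^K\}$, between two exponential variables $E^{(i,\pm)}_v$ of parameters $\tilde R^{(\pm)}_v=(1\pm\ve)\tilde R_v$, which inherit the iid structure from the increments of the unit-rate Poisson process. Conditioning on $\{T^{(i)}_v<T^K\}$ is precisely what keeps the trajectory inside the window where Lemma \ref{lem:equilibriumSize} and Lemma \ref{lem:pathRates} apply, since no trait outside $V_\a$ has yet reached size $K^{1/\alpha}$ and the environment inside $V_\a$ is still near equilibrium; as in the proof of Lemma \ref{lem:pathRates}, the finite waiting time $\tau_\ve$ before this equilibrium is attained carries no mutation event in the limit.

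The main obstacle is the superposition step. Distinct shortest paths share the same resident environment and, when they have a common prefix, part of their ancestry, so the processes $M_{v,\g}^K$ are not exactly independent and their bounding Poisson processes cannot literally be coupled to be jointly independent while jointly sandwiching. The way around this is the time-scale separation of Remark \ref{rem:Excursion}: successive mutant arrivals along any fixed path are separated by times of order $1/(K\mu_K^{d(\v,v)})\gg1$, whereas the excursions that generate them last only order one, so distinct arrivals, within or across paths, become asymptotically independent and never overlap as $K\to\infty$. Hence it suffices to control the total compensator, whose leading term is the deterministic sum $\tilde R_vK\mu_K^{d(\v,v)}$ of the path rates, and the inter-path dependence washes out. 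Making this asymptotic independence of rare arrivals quantitative, and uniform up to the stopping time $T^K$, is the technical heart of the argument.
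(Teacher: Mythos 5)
Your proposal is correct and follows essentially the same route as the paper's proof: sandwich each $M^K_{v,\g}$ by Poisson processes via Lemma \ref{lem:pathRates}, observe that non-shortest paths contribute rates of strictly higher order in $\mu_K$ and are hence negligible on the $1/K\mu_K^{d(\v,v)}$ time scale, superpose over the finitely many shortest paths, and read off the exponential inter-arrival laws. The step you single out as the ``technical heart'' --- joint independence at the superposition --- is settled by the paper in one line: in the pathwise representation \eqref{eq:pathwisePoissonRepr} the driving Poisson measures $Q^{(\cdot)}_{v,\g}$ are independent by construction, so the counting processes $M^K_{v,\g}$ are (to leading order, on the good event where the environment is near equilibrium) independent, and your excursion/time-scale argument is in effect a more detailed justification of that same mechanism.
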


\begin{proof}
	Due to Lemma \ref{lem:pathRates}, we can describe the arrivals of new $v$-type mutants approximately as sum of Poisson point processes. Since the Poisson measures $Q^{(\cdot)}_{\cdot,\cdot}$ in our representation \eqref{eq:pathwisePoissonRepr} are taken as independent, the resulting mutation counting processes $M_{v,\g}^K$ are also independent. Hence their sum can be approximated by a Poisson process with with intensity
	\begin{align}
	 \sum_{\g:\partial V_\a\to v} \tilde{R}_{v,\g}K\mu_K^{\abs{\g}+\lalpha}. 
    \end{align}	
	
	Since each summand scales with the length of the respective path, the first order of the overall rate is given only by the shortest paths (i.e.\ $\g$ such that $\abs{\g}=d(V_\a,v)\linebreak[1]=d(\v,v)-\lalpha$). As a result, the first order becomes \eqref{eq:traitRates} multiplied by $K\mu^{d(\v,v)}$. Finally, the waiting times of homogeneous Poisson point processes are exponentially distributed with the same rate.
\end{proof}

\subsection{Proof of Theorem \ref{Thm:main} and Corollary \ref{Cor:main}}
\label{sec:4.3_proofs_one_step}

We have now assembled all the tools to finish the proof of Theorem \ref{Thm:main} and Corollary \ref{Cor:main}.

Note that, with the notation from the proof of Lemma \ref{lem:equilibriumSize}, all following considerations are only valid up to the stopping time $S^K_\eps\land\sigma^K_\eps$, for sufficiently small $\eps$.  Since we have seen previously that $T^K_\fix\leq S^K_\eps\land\sigma^K_\eps$ with probability converging to one, as $K\to\infty$, we do not condition on this anymore in the following. Moreover, constants $c$ and $C$ may vary throughout the proof but are always assumed to satisfy $0<c,C<\infty$.

Both results assume that the initial conditions $(\beta^K(0))_{K\geq0}$ compose an asymptotic ESC associated to the coexisting traits $\v\subset V$. In a first step, we study the time until the fixation of the first mutant trait outside of $V_\a:=V_\a(\v)$, i.e.\ $T^K_\fix$. Corollary \ref{cor:mutantArrivals} implies that, for all traits $w\in V\backslash V_\alpha$ such that all shortest paths $\g:\v\to v$ only pass through unfit traits, new mutants of this trait arise approximately according to a Poisson point process with rate $\tilde{R}_v$. By assumption,  $\b^K_w(0)=0$, for all $K>K_0$ and $w\in V\backslash V_\a$, i.e.\ all traits outside of $V_\a$ are initially extinct. As a result, individuals of such traits $w$ are only present due to the above incoming mutations.

We now argue why it suffices to consider traits $w\in V\backslash V_\a$ such that $f(w,\v)>0$ and $d(\v,w)=L(\v)$, i.e.\ the $w\in V_\mut:=V_\mut(\v)$, as candidates to reach $\b^K_w=1/\a$ first and trigger the stopping time $T^K_\fix$.

For all $w$ such that $\lalpha<d(\v,w)<L(\v)$, the definition of $L(\v)$ yields $f(w,\v)<0$. Therefore, the descendants of a mutant of such traits can be bounded from above by a subcritical birth-death process with rates that do not depend on $K$, that dies out within a finite time with probability 1. As a result,
\begin{align}
\lim_{K\to\infty}\P\left(\sup_{t\in[0, T^K_\fix\land e^{U_\eps K}]}\b^K_w(t)\geq\frac{1}{\a}\right)=0.
\end{align}

For $w$ such that $d(\v,w)=L(\v)$ and $f(w,\v)<0$, the same argument can be applied.

Finally, for all $w$ such that $d(\v,w)>L(\v)$, for all $T<\infty$, Corollary \ref{cor:mutantArrivals} implies that the arrival time of the first $w$ mutant, $T^{(1)}_w$, satisfies
\begin{align}
\lim_{K\to\infty}\P\left(T^{(1)}_w\leq \frac{T}{K\mu_K^{L(\v)}}\land T^K_\fix\right)=0.
\end{align}

Focussing on the $w \in V_\mut$, we can use couplings to supercritical birth-death processes (similar to the arguments in the previous sections) to bound the different mutant populations. Using classical results on branching processes (e.g.\ from \cite[Ch.\ III.4]{AthNey72}) we can approximate the probability that the descendants of a single mutant of a particular trait $w$ do not go extinct by $(1\pm C\eps)f(w,\v)/b(w)$. Moreover, conditioned on not going extinct, the time that such a population needs to grow to a size of $K^{1/\a}$ can be bounded by $(1\pm C\eps)\ln K/\a f(w,\v)$. It is therefore negligible on the time scale $1/K\mu_K^{L(\v)}$, on which the $w$ mutants arise.

Overall, we can deduce from Corollary \ref{cor:mutantArrivals} that there is a constants $0<c<\infty$ and exponential random variables $E^\pm_{w,\fix}$ with parameters $(1\pm c\eps)\tilde{R}_wf(w,\v)/b(w)=(1\pm c\eps)R(\v,w)$ such that
\begin{align}
	\liminf_{K\to\infty}\P\left(E^-_{w,\fix}\leq K\mu_K^{L(\v)}T^K_\fix\leq E^+_{w,\fix}\Big\vert\beta^K_w(T^K_\fix)=\frac{1}{\a}\right)\geq 1-c\eps.
\end{align}

Since the mutants arising along different paths are independent (see the proof of Corollary \ref{cor:mutantArrivals}), the actual stopping time $K\mu_K^{L(\v))}T^K_\fix$ (without conditioning on a trait $w$) is roughly exponentially distributed with the sum of all rates $R(\v)=\sum_{w\in V_\mut}R(\v,w)$.  In addition, the probability that a certain trait $w\in V_\mut$ triggers the stopping time $T^K_\fix$ can be approximated by $R(\v,w)/R(\v)$.  More precisely, there are exponential random variables $E^\pm(\eps)$ such that
\begin{align}
\liminf_{K\to\infty}\P\left(E^-(\eps)\leq K\mu_K^{L(\v)}T^K_\fix\leq E^+(\eps)\right)\geq 1-c\eps,\\
\frac{R(\v,w)}{R(\v)}(1-c\eps)\leq\lim_{K\to\infty}\P\left(\beta^K_w(T^K_\fix)=\frac{1}{\a}\right)\leq\frac{R(\v,w)}{R(\v)}(1+c\eps).
\end{align}

Since $\eps$ can be picked arbitrarily small, this concludes the proof of Theorem \ref{Thm:main}.

To deduce Corollary \ref{Cor:main}, we note that at time $T^K_\fix$ the population sizes satisfy \eqref{eq:afterFix}, for some $w\in V_\mut(\v)$. Hence the assumption of the corollary and Theorem \ref{Thm_betalogK} imply that a new ESC associated to $\v_\ESC (\v,w)$ is obtained within a time of order $\ln K$. We emphasise that, although Theorem \ref{Thm_betalogK} only implies that $\beta^K_u\to0$ for traits\linebreak $u\notin V_\a(\v_\ESC (\v,w))$ after this time, these subpopulations can be bounded from above by subcritical branching processes that go extinct within a time of order 1, such that the conditions of $T^K_\ESC$ are truly satisfied. This yields the first claim of Corollary \ref{Cor:main}. Since this time is again negligible with respect to the $1/K\mu_K^{L(\v)}$-time scale, the second claim follows directly.  For the last claim, we realise that a new ESC $\w$ might be reached from multiple $w\in V_\mut(\v)$, and we therefore add up all corresponding probabilities to obtain $p(\v,\w)$. This concludes the proof of Corollary \ref{Cor:main}.

\subsection{Proof of Corollary \ref{Cor:kProcess} and Theorem \ref{Thm:Lscale}}
\label{sec:4.4_proofs_multi_steps}

In order to derive results for the jump chain $(\v^{(k)})_{k\geq0}$ on $\GG_\ESC$, we observe that, after a successful transition according to Corollary \ref{Cor:main}, the final state of the process again satisfies the initial assumptions for another application of the corollary. We simply need to recompute the state-dependent quantities ($L(\v),V_\mut(\v)$, etc.). As a consequence, the strong Markov property allows us to use Corollary \ref{Cor:main} to construct the random sequence $(\v^{(k)})_{k\geq 0}$ as well as derive the asymptotics of the stopping times $T_\ESC^{(k,K)}$ by an inductive procedure. This proves Corollary \ref{Cor:kProcess}.

To extract the limiting process on the time scale $1/K\mu_K^L$ for fixed $L>\alpha$, take an initial configuration of this stability degree, i.e.\ $\v\in\SS^L$. Considering the jump chain $(\v^{(k)})_{k\geq0}$ with $\v^{(0)}=\v$, Assumption \ref{Ass:NoCycles} implies that, with probability one, $(\v^{(k)})_{k\geq0}$ reaches an ESC of stability degree at least $L$ within finitely many steps. We now consider such a finite path $\G:\v\to\w$ in $\GG_\ESC$, where $L(\w)\geq L$. Without loss of generality we may assume that the intermediate ESCs are of strictly lower stability degree, i.e.\ $L(\G_i)<L\linebreak \forall 1\leq i<\abs\G$. Otherwise we could shorten the path. Asking now for the time $T_\G^K$ that it takes to transition from $\v$ to $\w$ along $\G$, we can simply add up the single step transition times $T^{(i,K)}-T^{(i-1,K)}$. By Corollary \ref{Cor:kProcess}, we know that, on the time scale $1/K\mu_K^{L^{(i)}}$, those are well approximated by exponential random variables $E^{(i)}_\pm$. Since $L=L^{(1)}>L^{(i)}$, for $2\leq i\leq\abs\G$, we can deduce that the rescaled transition time $T_\G^K K\mu_K^L$ is dominated by the very first transition and thus well described by exponential random variables.

To compute the respective transition rates, notice that by Corollary \ref{Cor:kProcess}, on the time scale $1/K\mu_K^L$, the rate to escape from $\v=\G_0$ is given by $R(\v)=R^{(1)}$. Moreover, we have to take into account that we consider the case where the limit process $(\v^{(i)})_{i\geq i}$ takes a particular path, i.e.\ $\v^{(i)}=\G_i$, for $0\leq i\leq\abs\G$. The probability of this event is simply given by the product of the one-step-probabilities $p(\v^ {(i-1)},\v^{(i)})$. Similarly to previous arguments, there might by different paths $\G:\v\to\w$ and hence we add up their probabilities. This yields the rates $R^L(\v,\w)$ in \eqref{eq:RatesLTimescale} and therefore the claimed dynamics of the jump process $(\v^L(t))_{t\in[0,T]}$ on the $L$-scale graph $\GG^L$.

To finally deduce the limit of the rescaled population process $N^K/K$, we note that there is no macroscopic evolution during almost the entire waiting time for a transition on $\GG^L$. The set of macroscopic traits $\dset{v\in V:\b^K_v(t)>1-\eps_K}$ only changes after a new mutant fixates, which happens at time $T^{(1,K)}_\fix$. The rest of the transition time, which may consist of many chances of the macroscopic traits, vanishes when rescaling with $K\mu_K^L$. Therefore, we obtain the limit process of Theorem \ref{Thm:Lscale}, which jumps between the Lotka-Volterra-equilibria associated to the state of $(\v^L(t))_{t\in[0,T]}$.



\appendix

\section{Technical results}
\label{appendix}
The aim of this chapter is to collect some results on the $\OO(1)$- and $\OO(\ln K)$-time scale behaviour of the population process. While Section \ref{app:A1} explains the form of $\lambda(\rho)$, Section \ref{app:A2} justifies the notation $\v_\ESC (\v,v)$. The statements have been derived in \cite{BoCoSm19} and \cite{CoKrSm21} whereto we refer for detailed proofs.

\subsection{Excursions of subcritical birth death processes}
\label{app:A1}
The first lemma quantifies the mean number of birth events before a subcritical birth death process goes extinct, corresponding to $\lambda(\rho)$. Although we restate an existing result here, we provide a short proof below. This proof is different to the more general scenario that is cited in \cite{BoCoSm19} and gives the reader an intuition behind the expression.
\begin{lemma}
	\label{lem:offspringOfAnExcursion} (\cite[Lemma A.3]{BoCoSm19})
	Consider a subcritical linear birth death process with individual birth and death rates $0<b<d$. Denote by $Z$ the total number of birth events during an excursion of this process initiated with exactly one individual. Then, for $k\in\N_0$,
	\begin{align}
		\label{eq:ExcursionProb}
		p^{(b,d)}(k):=\Prob{Z=k}=\frac{(2k)!}{k!(k+1)!}\left(\frac{b}{b+d}\right)^k\left(\frac{d}{b+d}\right)^{k+1}
	\end{align}
	and in particular
	\begin{align}
		\label{eq:ExcursionMean}
		e^{(b,d)}:=\Exd{Z}=\sum_{k=1}^{\infty}\frac{(2k)!}{(k-1)!(k+1)!}\left(\frac{b}{b+d}\right)^k\left(\frac{d}{b+d}\right)^{k+1}.
	\end{align}
	Moreover we have the following continuity result. There exist two positive constants $c,\eps_0>0$, such that, for all $0<\eps<\eps_0$ and $0<b_i<d_i$, if $\abs{b_1-b_2}<\eps$ and $\abs{d_1-d_2}<\eps$, then
	\begin{align}
		\abs{e^{(b_1,d_1)}-e^{(b_2,d_2)}}<c\eps.
	\end{align}
\end{lemma}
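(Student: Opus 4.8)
The plan is to pass to the embedded jump chain of the birth death process and reduce the whole statement to a first-passage computation for a simple random walk, from which both the law \eqref{eq:ExcursionProb} and the mean \eqref{eq:ExcursionMean} can be read off, and then to obtain the regularity from an explicit closed form. First I would record that, sampled at its jump times, the process is the simple random walk on $\N$ started at $1$ that steps up by one with probability $\rho:=b/(b+d)$ and down by one with probability $1-\rho=d/(b+d)$, absorbed at $0$; the excursion terminates exactly when this walk first hits $0$, and $Z$ counts the up-steps taken before absorption. If $Z=k$, the walk makes $k$ up-steps and, in order to travel from $1$ to $0$, exactly $k+1$ down-steps, so the trajectory has length $2k+1$ and must stay $\geq 1$ until the final step. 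By the cycle lemma (equivalently, the reflection principle), the number of such first-passage trajectories is the ballot number $\tfrac{1}{2k+1}\binom{2k+1}{k}=\tfrac{(2k)!}{k!(k+1)!}$, the $k$-th Catalan number, and each occurs with probability $\rho^{k}(1-\rho)^{k+1}$; multiplying the two factors gives \eqref{eq:ExcursionProb}. I would sanity-check the cases $k=0,1,2$ against this count and verify, via the Catalan generating function with argument $\rho(1-\rho)<1/4$, that the masses sum to one precisely when $b<d$.

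For the mean I would start from \eqref{eq:ExcursionProb} and use the identity $k\cdot\tfrac{(2k)!}{k!(k+1)!}=\tfrac{(2k)!}{(k-1)!(k+1)!}$, which turns $\Exd{Z}=\sum_{k\geq1}k\,\Prob{Z=k}$ into exactly the series \eqref{eq:ExcursionMean}. To show this series converges, and to prepare the continuity estimate, I would reinterpret the same object as a subcritical Galton--Watson process: a single individual produces $j$ offspring with probability $\rho^{j}(1-\rho)$ before dying, hence has mean offspring number $\rho/(1-\rho)<1$ since $b<d$, and $Z+1$ is its total progeny. The standard total-progeny identity then gives $\Exd{Z}+1=(1-\rho/(1-\rho))^{-1}=(1-\rho)/(1-2\rho)$, so $\Exd{Z}=\rho/(1-2\rho)=b/(d-b)$, finite precisely because of subcriticality (the same value also falls out of the one-step branching recursion $m=\rho(1+2m)$).

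Finally, for the continuity statement I would exploit the closed form $e^{(b,d)}=b/(d-b)$, which is smooth on $\{0<b<d\}$ with $\partial_b e^{(b,d)}=d/(d-b)^2$ and $\partial_d e^{(b,d)}=-b/(d-b)^2$. On any compact set of rate pairs bounded away from the critical line $\{b=d\}$ these derivatives are bounded, so the mean value inequality yields $\abs{e^{(b_1,d_1)}-e^{(b_2,d_2)}}\leq c(\abs{b_1-b_2}+\abs{d_1-d_2})<2c\eps$, which is the claim after relabelling the constant. The one genuine subtlety, and the step I expect to be the main obstacle, is uniformity: the Lipschitz constant blows up as $d-b\to0$, so the constant $c$ cannot be uniform over all of $\{0<b<d\}$ and must be taken relative to a fixed compact set bounded away from criticality. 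This is exactly the regime that the model supplies, since the finitely many rate pairs $(b(v),\,d(v)+\sum_{w\in\v}c(v,w)\bar{n}_w(\v))$ arising along the excursions correspond to traits of strictly negative invasion fitness and hence stay a definite distance from the critical line.
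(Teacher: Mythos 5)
Your proof is correct, and for the distributional identity \eqref{eq:ExcursionProb} it follows the paper's route exactly: both arguments pass to the embedded jump chain and count first-passage paths via the reflection principle (the paper reflects the post-hitting segment of a bad path to biject it with a walk from $1$ to $-1$, obtaining $\binom{2k}{k}-\binom{2k}{k-1}$; your ballot-number expression $\frac{1}{2k+1}\binom{2k+1}{k}$ is the same Catalan count). Where you genuinely diverge is in the treatment of the mean and the continuity claim. The paper obtains \eqref{eq:ExcursionMean} from \eqref{eq:ExcursionProb} and then dispatches continuity in one sentence, as ``a simple consequence of the mean value theorem,'' applied implicitly to the series. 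You instead derive the closed form $e^{(b,d)}=b/(d-b)$ via the total-progeny identity for the embedded Galton--Watson process with geometric offspring law (equivalently the one-step recursion $m=\rho(1+2m)$), and this buys two things: convergence of the series comes for free from subcriticality, and the mean value argument becomes fully explicit, with Lipschitz constant of order $(d-b)^{-2}$. That explicitness also lets you spot a real imprecision that the paper's one-line argument glosses over: as literally stated, with $c,\ve_0$ uniform over all pairs $0<b_i<d_i$, the continuity claim fails near the critical line (take $b_1=b_2=1$, $d_1=1+\delta$, $d_2=1+\delta/2$ with $\delta<\ve$; then $\abs{e^{(b_1,d_1)}-e^{(b_2,d_2)}}=1/\delta$ is unbounded). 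Your repair---restricting to rate pairs in a compact set bounded away from $\{b=d\}$, which is exactly the regime the model supplies because the traits traversed in the valley have invasion fitness bounded away from zero---is the correct reading of how the lemma is actually used in Lemma \ref{lem:pathRates}, and it is the statement that the mean value theorem genuinely proves.
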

\begin{remark}
	Note that \eqref{eq:ExcursionMean} corresponds to \eqref{def:rho} via $e^{(b,d)}=\lambda(\rho)$, where $\rho=b/(b+d)$.
\end{remark}

\begin{proof}
	Although the considered process takes place in continuous time, it suffices to focus on the birth and death events as jump chain in discrete time. This is nothing but a simple random walk on $\N_0$ with probabilities
	\begin{align}
		p(x,x+1)=\frac{b}{b+d},\quad p(x,x-1)=\frac{d}{b+d} \quad \forall x\geq 1
	\end{align}
	and absorbing state $0$.	From this point of view it is only a question of counting the number of paths leading from one individual to extinction consisting of exactly $k$ births  and hence $k+1$ death events. As final step there has to happen a death since the population does not vanish before. So the first $2k$ events form a walk from $1$ to $1$. There are $\binom{2k}{k}$ of such paths but some of them would lead to extinction earlier. To determine their number we apply a reflection principle in the following way. Let $x=(x_0,x_1,\ldots,x_{2k})$ be a path leading from one to one such that there exists a $0<j<2k$ with $x_j=0$. Then we define the partially reflected path $\tilde{x}$ by
	\begin{align}
		\tilde{x}_i:=\left\{ \begin{array}{rl}
			x_i & \text{for } i\leq j\\
			-x_i & \text{for }  i>j
		\end{array} \right.
	\end{align}
	This gives us a unique path from $\tilde{x}_0=1$ to $\tilde{x}_{2k}=-1$ (cf.\ \ref{fig:RelfectionPrinciple}). Moreover there is a one to one correspondence between prematurely extincting processes and paths leading from $1$ to $-1$. The latter ones consist of only $k-1$ births and hence there are $\binom{2k}{k-1}$ different ones.
	Finally the total number of legal paths is
	\begin{align}
		\#\dset{x=(x_0,x_1,\ldots,x_{2k})\vert x_0=1,x_{2k}=1,x_i>0}=\binom{2k}{k}-\binom{2k}{k-1}=\frac{(2k)!}{k!(k+1)!}.
	\end{align}
	We now achieve \eqref{eq:ExcursionProb} by multiplying with the probability of $k$ births and $k+1$ death events. The last statement is a simple consequence of the mean value theorem.
\end{proof}

\begin{figure}
	\centering
	\includegraphics[scale=0.8]{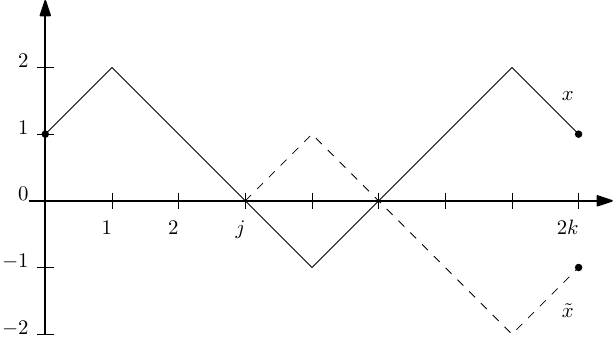}
	\caption{Original path $x$ that prematurely goes extinct and its reflection $\tilde{x}$.}
	\label{fig:RelfectionPrinciple}
\end{figure}

\subsection{Fast evolution until ESC}\label{proof_logK}
\label{app:A2}

In this subsection we discuss the first phase of evolution, where an ESC is obtained on the $\ln K$-time scale. The convergence of $N^K(t\ln K)/K$ and $\beta^K(t\ln K)$, as $K\to\infty$, is studied in \cite{CoKrSm21}. In the following we cite the respective results in the notation of this paper.
	
	For a finite graph $\GG=(V,E)$ and under Assumptions \ref{Ass_selfcomp} and \ref{Ass_alphafit}, the trajectories $(\beta_w(t), w\in V)$ (which turn out to be the limit of $(\beta^K_w(t\ln K), w\in V)$) are defined by an inductive procedure. The construction is valid until a stopping time $T_0$.
	
	Denote by $\tilde{\v}^{(\ell)}$, $\ell\geq0$, the sequence of consecutive coexisting resident traits. We emphasize that these are not to be confused with the sequence of resident traits $\v^{(k)}$, $k\geq0$, that are associated to ESCs. The invasion times, at which the sets of resident traits change due to upcoming mutant traits, are denoted by the increasing sequence $(s_\ell)_{\ell\geq0}$.
	
	For initial conditions $\tilde{\beta}(0)$, the support of the unique asymptotically stable equilibrium of the Lotka-Volterra system \eqref{LV_system} associated to the traits $\{w\in V:\tilde{\beta}_w(0)=1\}$ (if existent) is denoted by $\tilde{\v}^{(0)}$, i.e.\ $\bar{n}(\tilde{\v}^{(0)})=LVE_+(\tilde{\v}^{(0)})\subset LVE(\{w\in V:\tilde{\beta}_w(0)=1\})$. The equilibrium $\bar{n}(\tilde{\v}^{(0)})$ is reached within a time of order 1 and we set $s_0:=0$. Moreover, we define $\beta_w(0):=\max_{u\in V}[\tilde{\beta}_u(0)-d(u,w)/\alpha]_+$ as the initial condition of the limiting trajectories. This reflects that, within a time of order 1, living traits produce neighbouring mutant populations with the size of a $\mu_K$-fraction of their own size. This time of order 1 is negligible on the $\ln K$-time scale, which the limit $\beta$ is defined on.
	
	Assuming that $s_{\ell-1}$, $\tilde{\v}^{(\ell-1)}$ such that $LVE_+(\tilde{\v}^{(\ell-1)})=\bar{n}(\tilde{\v}^{(\ell-1)})$, and $\beta(s_{\ell-1})$ are known, the next phase can be described as follows. The $\ell^\text{th}$ invasion time is set to
	\begin{align}
		s_\ell:=\inf\{t>s_{\ell-1}:\exists\ w\notin\tilde{\v}^{(\ell-1)}:\beta_w(t)=1\}.
	\end{align}
	For $s_{\ell-1}\leq t\leq s_\ell$, for any $w\in V$, $\beta_w(t)$ is defined by
	\begin{align}\label{beta}
		\beta_w(t):=\max_{\substack{u\in V}}\left[\beta_u(s_{\ell-1})+(t-t_{u,\ell}\land t)f(u,\tilde{\v}^{(\ell-1)})-\frac{d(u,w)}{\alpha}\right]\lor 0,
	\end{align}
	where, for any $w \in V$,
	\begin{align} \label{deftwk}
		t_{w,\ell}:=\begin{cases}\inf\left\{t\geq s_{\ell-1}:\exists\ u\in V: d(u,w)=1, \beta_u(t)=\frac{1}{\alpha}\right\}
			&\text{if }\beta_w(s_{\ell-1})=0\\s_{\ell-1}&\text{else}\end{cases}
	\end{align}
	is the first time in $[s_{\ell-1},s_\ell]$ when this trait arises. If we define $V_\liv(t):=\{w\in V:\beta_w(t)>0\}$ equivalently to $V^K_\liv$ (on the $\ln K$-time scale), then this implies $\beta_w(t_{w,\ell})\geq0$ and $\beta_w(t_{w,\ell}+\delta)>0$, for small $\delta>0$.
	
	The stopping time $T_0$, that terminates the inductive construction of the limiting trajectories, is set to $s_\ell$ if
	\begin{itemize}
		\item[(a)] there is more than one $w\in V\backslash\tilde{\v}^{(\ell-1)}$ such that $\beta_w(s_\ell)=1$;
		\item[(b)] the mutation-free Lotka-Volterra system associated to $\tilde{\v}^{(\ell-1)}$ and the unique $w\in V\backslash\tilde{\v}^{(\ell-1)}$ such that $\beta_w(s_\ell)=1$ does not have a unique globally attractive stable equilibrium (in particular, if such an equilibrium does not exist for $\{w\in V:\tilde{\beta}_w(0)=1\}$, $T_0$ is set to 0);
		\item[(c)] there exists $w\in V\backslash\tilde{\v}^{(\ell-1)}$ such that $\beta_w(s_\ell)=0$ and $\beta_w(s_\ell-\delta)>0$ for all $\delta>0$ small enough. 
		\item[(d)] there exists $w\in V\backslash\tilde{\v}^{(\ell-1)}$ such that $s_\ell=t_{w,\ell}$.
	\end{itemize}
	These conditions are mostly technical and are discussed in \cite{CoKrSm21}.
	
	With this construction, the results can be stated as follows:
	
	\begin{theorem}\label{Thm_betalogK} (\cite[Theorem 2.7]{CoKrSm21})
		Let $\GG=(V,E)$ be a finite graph. Suppose that Assumption \ref{Ass_selfcomp} and \ref{Ass_alphafit} hold and consider the model defined by \eqref{Generator} with $\mu_K=K^{-1/\alpha}$.
		Let $\tilde{\v}_0\subset V$ and assume that, for every $w\in V$ ,
		\begin{align}\label{cond-init}
			\beta^K_w(0)\rightarrow\tilde{\beta}_w(0), \quad (K\to\infty) \quad \text{in probability}.
		\end{align}
		Then, for all $T>0$, as $K\to\infty$, the sequence $((\beta^K_w(t\ln K),w\in V),t\in[0,T\land T_0])$ converges in probability in 
		$\mathbb{D}([0,T\land T_0],\R_+^V)$ to the deterministic, piecewise affine, continuous function $((\beta_w(t),w\in V),t\in[0,T\land T_0])$, which is defined in \eqref{beta}.
	\end{theorem}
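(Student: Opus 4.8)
Since this is precisely Theorem 2.7 of \cite{CoKrSm21}, the complete argument is carried out there; I sketch the strategy one would follow to reconstruct it. The plan is to induct over the finitely many invasion phases $[s_{\ell-1},s_\ell]$ furnished by the construction \eqref{beta}. On each such phase the resident set $\v^{(\ell-1)}$ is fixed and, by hypothesis, sits at its coexistence equilibrium $\bar{n}(\v^{(\ell-1)})$ of order $K$. The key structural observation is that, while the residents stay near equilibrium, the competitive environment felt by every microscopic trait is essentially constant, so the evolution of each non-resident subpopulation decouples into a branching-type growth driven by its invasion fitness together with an influx of mutants from its incoming neighbours.

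First I would establish the macroscopic picture: by the law of large numbers of \cite{EtKu86} together with the stability estimates already used in the proof of Lemma \ref{lem:equilibriumSize} (cf.\ \cite[Lemma A.6]{CoKrSm21} and \cite[Proposition A.2]{ChMe11}), the rescaled resident sizes $N^K_v/K$, $v\in\v^{(\ell-1)}$, remain within an $\eps$-neighbourhood of $\bar{n}_v(\v^{(\ell-1)})$ for a time at least $\mathrm{e}^{UK}$ with $U>0$, hence throughout the whole $\ln K$-phase. This freezes the invasion fitnesses $f(\cdot,\v^{(\ell-1)})$. Next, for each microscopic trait $w$, I would couple $N^K_w$ from above and below by linear birth-death processes whose individual rates are $O(\eps)$-perturbations of those yielding net growth rate $f(w,\v^{(\ell-1)})$, and whose immigration term is the $\mu_K$-fraction of the inductively controlled sizes of the incoming neighbours. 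Taking logarithms and dividing by $\ln K$, the exponential growth of the branching part becomes affine growth of slope $f(w,\v^{(\ell-1)})$ in the rescaled time $t\ln K$, while each incoming neighbour $u$ contributes the shift $\beta_u - d(u,w)/\alpha$. The maximum over these mechanisms, iterated along increasing graph distance, reproduces exactly the representation \eqref{beta}, with the arrival time $t_{w,\ell}$ of \eqref{deftwk} marking when $w$ first becomes living.

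The invasion time $s_\ell$ is the first instant some $\beta_w$ reaches $1$; at that moment the corresponding trait becomes macroscopic and the Lotka-Volterra dynamics \eqref{LV_system} for $\v^{(\ell-1)}\cup\{w\}$ drive the system to the new equilibrium $\bar{n}(\v^{(\ell)})$ within a time of order $1$, which is negligible on the $\ln K$-scale and thus invisible in the limit. Using the strong Markov property at $s_\ell$ and the fact that there are only finitely many traits, hence finitely many phases before $T_0$, I would concatenate the phase-wise convergences into convergence in probability in $\mathbb{D}([0,T\wedge T_0],\R_+^V)$; since the limit is continuous and deterministic, uniform convergence on compacts upgrades to Skorokhod convergence. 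The initialisation is handled by noting that within a time of order $1$ the living traits seed their neighbours at a $\mu_K$-fraction of their size, producing the initial profile $\beta_w(0)=\max_u[\tilde{\beta}_u(0)-d(u,w)/\alpha]_+$.

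The main obstacle is the simultaneous, coupled control of the whole family of microscopic exponents: the immigration rate into $w$ depends on the current sizes of its neighbours, which are themselves only controlled up to $o(1)$ errors in the exponent, so the two-sided couplings must be propagated carefully along increasing graph distance while preventing the accumulated $O(\eps)$ perturbations of the fitnesses from degrading the slopes over the finite but nontrivial phase length. Equally delicate is excluding, within each phase, the degenerate behaviours gathered in the termination criteria (a)--(d) and verifying that all couplings remain valid exactly up to $T_0$; these are the points requiring the detailed work in \cite{CoKrSm21}.
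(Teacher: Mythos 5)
Your proposal is correct and takes essentially the same approach as the paper: the paper does not reprove this statement but cites \cite[Theorem 2.7]{CoKrSm21}, and your phase-wise induction with two-sided birth-death couplings, frozen invasion fitnesses during resident equilibrium, mutant influx along increasing graph distance, and negligible order-$1$ Lotka-Volterra transitions matches both the heuristic derivation of \eqref{logK_growth} in Section \ref{sec:2.1_model} and the actual strategy of the cited proof. Nothing further is required.
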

	
	\begin{theorem}\label{Thm_NlogK} (\cite[Proposition 2.8]{CoKrSm21})
		 Under the same assumptions as in Theorem \ref{Thm_betalogK}, 
		for all $T>0$, as $K\to\infty$, the sequence $((N^K_w(t\ln K)/K,w\in V),t\in[0,T\land T_0])$ 
		converges in the sense of the finite dimensional distributions
		to a deterministic jump process $((N_w(t),w\in V),t\in[0,T\land T_0])$, which jumps between different Lotka-Volterra equilibria according to
		\begin{align}
			N_w(t):=\sum_{\ell\in\N:s_{\ell+1}\leq T_0}\mathbf{1}_{s_\ell\leq t<s_{\ell+1}}\mathbf{1}_{w\in\tilde{\v}^{(\ell)}}\bar{n}_w(\tilde{\v}^{(\ell)}).
		\end{align}
	\end{theorem}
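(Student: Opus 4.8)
The plan is to deduce the convergence of the rescaled sizes $N^K/K$ from the already established convergence of the exponents $\beta^K$ in Theorem \ref{Thm_betalogK}, upgrading order-of-magnitude information to the precise prefactors $\bar n_w(\v^{(\ell)})$. Fix a finite collection of times $0<t_1<\dots<t_m<T\wedge T_0$, none of which coincides with an invasion time $s_\ell$; since finitely many points are excluded and the $s_\ell$ form a discrete set, this is no loss for finite-dimensional distributions. By Theorem \ref{Thm_betalogK}, $\beta^K_w(t_j\ln K)\to\beta_w(t_j)$ in probability, and by construction $\beta_w(t_j)=1$ precisely when $w\in\v^{(\ell_j)}$, where $\ell_j$ is the index with $s_{\ell_j}\le t_j<s_{\ell_j+1}$. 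For every microscopic trait, i.e.\ $w$ with $\beta_w(t_j)<1$, the relation $N^K_w=K^{\beta^K_w}-1$ immediately gives $N^K_w(t_j\ln K)/K\to0$ in probability, matching the claimed limit since $w\notin\v^{(\ell_j)}$.

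The substance of the proof therefore lies in the macroscopic traits $w\in\v^{(\ell_j)}$, for which Theorem \ref{Thm_betalogK} only yields $N^K_w(t_j\ln K)=K^{1+o(1)}$ and not the correct constant $\bar n_w(\v^{(\ell_j)})$. To recover the prefactor I would argue phase by phase. At a real time slightly after $s_{\ell_j}\ln K$, the traits in $\v^{(\ell_j)}$ are of order $K$ while all other living traits are of strictly smaller order (a $\mu_K$-fraction or less), so their contribution to the competitive pressure $\sum_w c^K(\cdot,w)N^K_w$ is of order $\mu_K$ and hence negligible in the limit. On real-time intervals of order one the mutation terms in the generator \eqref{Generator} are likewise negligible as $\mu_K\to0$, so on such intervals the rescaled vector $(N^K_w/K)_{w\in\v^{(\ell_j)}}$ is well approximated, by the law of large numbers of \cite{EtKu86}, by the solution of the mutation-free system $LVS(\v^{(\ell_j)})$ in \eqref{LV_system}.

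Termination criterion (b) in the construction guarantees that $\bar n(\v^{(\ell_j)})$ is the unique, globally attractive stable equilibrium of this system, so the Lotka--Volterra flow reaches any $\varepsilon$-neighbourhood of $\bar n(\v^{(\ell_j)})$ within a real time $\tau_\varepsilon$ of order one. Since $\tau_\varepsilon=o(\ln K)$ and $t_j$ is bounded away from $s_{\ell_j}$, the configuration has had a real time of order $\ln K$ to equilibrate before the rescaled time $t_j$, whence $N^K_w(t_j\ln K)/K\to\bar n_w(\v^{(\ell_j)})$ in probability. Iterating this argument across the finitely many phases $[s_\ell,s_{\ell+1})$ up to $T\wedge T_0$, using the strong Markov property at the (rescaled) invasion times together with the stability of the equilibria to propagate the approximation, upgrades the pointwise statements to joint convergence of $(N^K_w(t_j\ln K)/K)_{w,j}$ to the deterministic vector prescribed by the limit jump process, giving convergence in the sense of finite-dimensional distributions.

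The main obstacle is exactly this passage from exponents to prefactors for the macroscopic traits: Theorem \ref{Thm_betalogK} is blind to multiplicative constants, so one must separately control the Lotka--Volterra dynamics within each phase and verify two things at once, namely that the small, fluctuating mutant subpopulations perturb the resident equilibrium only at order $\mu_K$ (so the attractor is unaffected in the limit), and that relaxation to the equilibrium occurs on a real-time scale of order one, negligible after dividing time by $\ln K$. Excluding the transition times $s_\ell$ is what makes finite-dimensional convergence, rather than Skorokhod convergence, the natural and correct formulation, since near an $s_\ell$ the true process slides continuously between equilibria over a vanishing rescaled-time window whereas the limit genuinely jumps.
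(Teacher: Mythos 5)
You should first note that the paper itself contains no proof of Theorem \ref{Thm_NlogK}: the statement is imported verbatim as \cite[Proposition 2.8]{CoKrSm21}, and the appendix explicitly defers all proofs to that reference. So the comparison can only be with the argument in \cite{CoKrSm21} and with the way the present paper deploys the same ingredients elsewhere (notably in the proof of Lemma \ref{lem:equilibriumSize}). Measured against that, your proposal is essentially the standard and correct route: vanishing of all microscopic traits follows directly from Theorem \ref{Thm_betalogK} via $N^K_w=K^{\beta^K_w}-1$, and the prefactor $\bar n_w(\v^{(\ell)})$ for macroscopic traits is recovered phase by phase, observing that mutation terms and the competitive pressure of non-resident traits enter the drift only at order $\mu_K$, invoking the law of large numbers of \cite{EtKu86} for the mutation-free Lotka--Volterra flow, and using the global attractivity guaranteed by termination criterion (b). Your explicit exclusion of the invasion times $s_\ell$ from the finite-dimensional marginals is also the correct reading of the statement: at $t=s_\ell$ the process is in the middle of a Lotka--Volterra transition of real-time length $O(1)$, so convergence to the right-continuous limit genuinely fails exactly there, and only there.

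One step deserves to be made more careful. The \cite{EtKu86} law of large numbers controls the process only on real-time windows of fixed length, whereas between the end of the Lotka--Volterra transition and the observation time $t_j\ln K$ you must keep $(N^K_w/K)_{w\in\v^{(\ell_j)}}$ in an $\varepsilon$-neighbourhood of $\bar n(\v^{(\ell_j)})$ for a real time of order $\ln K$; iterating the finite-horizon approximation over order-one windows and appealing to ``stability'' accumulates errors. The clean fix is the exponential-time stability estimate for density-dependent processes near an attracting equilibrium, \cite[Proposition A.2]{ChMe11}, which gives that the rescaled resident vector stays $\varepsilon$-close to the equilibrium for a time of order $\mathrm{e}^{UK}$ with probability tending to one --- precisely the tool this paper uses in the proof of Lemma \ref{lem:equilibriumSize} (through the stopping time $S^K_\ve$), combined with \cite[Lemma A.6(ii)]{CoKrSm21} for the order-one entry time into that neighbourhood. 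With that substitution your argument closes; the remaining assembly (strong Markov property at the invasion times, joint convergence across the finitely many phases in $[0,T\land T_0]$) is routine and matches the structure used throughout Section \ref{sec:4_proofs}.
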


	Moreover, the invasion times $s_\ell$ and the times $t_{w,\ell}$ when new mutants arise are calculated precisely in \cite{CoKrSm21}. This is however not relevant to the discussion in this paper.
	
	We notice that the constructed trajectories $(\beta_w(t), w\in V)$ stay constant precisely once an ESC is obtained. In this case, there is no more visible evolution on the $\ln K$-time scale.


\bibliographystyle{abbrv}
\bibliography{Biomathe_bib}

\end{document}